\theoremstyle{plain}
\newtheorem{theorem}{Theorem}[section]
\newtheorem{lemma}[theorem]{Lemma}
\newtheorem{example}[theorem]{Example}
\newtheorem{corollary}[theorem]{Corollary}
\newtheorem{proposition}[theorem]{Proposition}
\theoremstyle{definition}
\newtheoremstyle{TheoremNum}
	{\topsep}{\topsep}              
  {\itshape}                      
  {}                              
  {\bfseries}                     
  {.}                             
  { }                             
  {\thmname{#1}\thmnote{ \bfseries #3}}
\newtheorem{remark}[theorem]{Remark}
\newcommand{\F}{\mathbb F}
\newcommand{\K}{\mathbb K}
\newcommand{\Z}{\mathbb Z}
\newcommand{\C}{\mathcal C}
\newcommand{\cD}{\mathcal D}
\newcommand{\bD}{\mathbb D}
\newcommand{\cS}{\mathcal S}
\newcommand{\bv}{\mathbf v}
\newcommand{\bili}{\mathtt b}
\newcommand{\bx}{\mathbf x}
\newcommand{\by}{\mathbf y}
\newcommand{\ba}{\mathbf a}
\newcommand{\bb}{\mathbf b}
\newcommand{\cC}{\mathscr C}
\newcommand{\cH}{\mathcal H}
\newcommand{\cG}{\mathcal G}
\newcommand{\id}{\mathrm{id}}
\newcommand{\T}{\mathbb T}
\newcommand{\scrT}{\mathscr T}
\newcommand{\bbS}{\mathbb S}
\newcommand{\Aut}{\mathrm{Aut}}
\newcommand{\Hom}{\mathrm{Hom}}
\newcommand{\End}{\mathrm{End}}
\newcommand{\GL}{\mathrm{GL}}
\newcommand{\rk}{\mathrm{rk}}
\newcommand{\Tr}{ \ensuremath{ \mathrm{Tr}}}
\newcommand{\RN}[1]{%
  \textup{\uppercase\expandafter{\romannumeral#1}}%
}
\newcommand{\rn}[1]{%
  \textup{\lowercase\expandafter{\romannumeral#1}}%
}
 \def\zhou#1 {\fbox {\footnote {\ }}\ \footnotetext { From Yue: {\color{red}#1}}}
 \def\rocco#1 {\fbox {\footnote {\ }}\ \footnotetext { From Rocco: {\color{blue}#1}}}
\begin{document}
	\title{On kernels and nuclei of rank metric codes}
	\author[G. Lunardon]{Guglielmo Lunardon\textsuperscript{\,1}}
	\author[R. Trombetti]{Rocco Trombetti\textsuperscript{\,1}}
	\address{\textsuperscript{1}Dipartimento di Mathematica e Applicazioni ``R. Caccioppoli", Universit\`{a} degli Studi di Napoli ``Federico \RN{2}", I-80126 Napoli, Italy}
	\email{rtrombet@unina.it}
	\author[Y. Zhou]{Yue Zhou\textsuperscript{\,2,3}}
	\address{\textsuperscript{2}College of Science, National University of Defense Technology, 410073 Changsha, China}
	\address{\textsuperscript{3}Department of Mathematics, University of Augsburg, 86135 Augsburg, Germany}
	\email{yue.zhou.ovgu@gmail.com}
	\date{\today}
	\maketitle
	
	\begin{abstract}
	For each rank metric code $\C\subseteq \K^{m\times n}$, we associate a translation structure, the kernel of which is shown to be invariant with respect to the equivalence on rank metric codes. When $\C$ is $\K$-linear, we also propose and investigate other two invariants called its middle nucleus and right nucleus. When $\K$ is a finite field $\F_q$ and $\C$ is a maximum rank distance code with minimum distance $d<\min\{m,n\}$ or $\gcd(m,n)=1$, the kernel of the associated translation structure is proved to be $\F_q$. Furthermore, we also show that the middle nucleus of a linear maximum rank distance code over $\F_q$ must be a finite field; its right nucleus also has to be a finite field under the condition $\max\{d,m-d+2\} \geqslant \left\lfloor \frac{n}{2} \right\rfloor +1$. Let $\cD$ be the DHO-set associated with a bilinear dimensional dual hyperoval over $\F_2$. The set $\cD$ gives rise to a linear rank metric code, and  we show that its kernel and right nucleus are is isomorphic to $\F_2$. Also, its middle nucleus must be a finite field containing $\F_q$. Moreover, we also consider the kernel and the nuclei of $\cD^k$ where $k$ is a Knuth operation. 
	\end{abstract}

\section{Introduction}
Let $\K$ be a field. The set $\K^{m\times n}$ of all $m\times n$ matrices over $\K$ is a $\K$-vector space. The \emph{rank metric distance} on the $\K^{m\times n}$ is defined by 
\[d(A,B)=\mathrm{rk}(A-B) \,\, \text{for} \,\, A,B\in \K^{m\times n},\]
where $\rk(C)$ stands for the rank of $C$.

A subset $\C\subseteq \K^{m\times n}$ is called a \emph{rank metric code}. The \emph{minimum distance} of $\C$ is
\[d(\C)=\min_{A,B\in \C, A\neq B} \{d(A,B)\}.\]
When $\C$ is a $\K$-linear subspace of $\K^{m\times n}$, we say that $\C$ is a $\K$-linear code and its dimension $\dim_{\K}(\C)$ is defined to be the dimension of $\C$ as a subspace over $\K$.

There are several interesting structures in finite geometry, cryptography and coding theory, which can be equivalently described in the context of rank metric codes. First, a quasifield is an algebraic structure with two binary operations which are often called its addition and multiplication. Quasifields are quite similar to skewfields, but with some weaker conditions. Quasifields of finite order are strongly related to translation planes in finite geometry. A quasifield of order $q^n$ with kernel $\F_q$ can be viewed as a subset $\C$ of $q^n$ matrices in $\F_q^{n\times n}$ satisfying that the zero matrix is in $\C$ and $d(\C)=n$. This subset $\C$ is often called a \emph{spreadset}. In particular, when $\C$ is $\F_q$-linear, it defines a finite \emph{semifield}, which is a quasifield with two-sided distributivity. For more details on quasifields and semifields, we refer to \cite{hughes_projective_1973,johnson_handbook_2007,lavrauw_semifields_2011}.

Another interesting topic is from cryptography and coding theory: A function $f: \F_{2^n}\rightarrow \F_{2^m}$ is called \emph{almost perfect nonlinear} (abbreviated to APN), if $\#\{x: f(x+a)+f(x)=b\}=0$ or $2$ for all $a\in \F_{2^n}^*$ and $b\in \F_{2^m}$. APN functions are of interest in the design of S-boxes, which are basic components of symmetric key algorithms. When $n=m$, except for the six families of APN monomials, most known families of APN functions are \emph{quadratic}, i.e.\ $f(x)=\sum_{i\le j} a_{ij} x^{2^i + 2^j}$. It is easy to see that the map given by $x\mapsto f(x+a)+f(x)+f(a)$ for each nonzero $a$ can be viewed as a matrix $M_a$ of rank $n-1$ in $\F_2^{n\times n}$. Furthermore, all $M_a$ together with the zero matrix form a $\F_2$-linear code $\C$ in $\F_{2}^{n\times n}$ and $d(\C)=n-1$.  We refer to \cite{blondeau_perfect_2015,pott_almost_2016} for  recent surveys on APN functions.

A quadratic APN function can be viewed geometrically as a special type of \emph{dimensional dual hyperoval} (DHO for short). Every known DHO is \emph{splitting}, which means that it can be described as a set $\cD$ of matrices, called a \emph{DHO-set}, in $\F_q^{n\times m}$ for certain $q$, $n$ and $m$. A DHO-set $\cD$ has an important property that the difference of any two distinct matrices in it are of rank $n-1$, whence $\cD$ is also a rank metric code and $d(\cD)=n-1$.

Rank metric codes are also useful in the construction of error correcting codes for random network coding and of some transversal designs \cite{koetter_coding_2008,silva_rank-metric_2008}.

Let $\C\subseteq \F_q^{m\times n}$. When $d(\C)=d$, it is well-known that
$$\#\C\le q^{\max\{m,n\}(\min\{m,n\}-d+1)},$$
which is the Singleton bound for the rank metric distance; see \cite{delsarte_bilinear_1978}. When the equality holds, we call $\C$ a \emph{maximum rank distance} (MRD for short) code. It is clear that the spreadset derived from a quasifield of order $q^n$ is an MRD code in $\F_q^{n\times n}$ and its minimum distance is $n$. For MRD codes with minimum distance less than $\min\{m,n\}$, there are a few known constructions. The first and most famous family is due to Gabidulin \cite{gabidulin_MRD_1985} and Delsarte \cite{delsarte_bilinear_1978} who found it independently. This family is later generalized by Kshevetskiy and Gabidulin in \cite{kshevetskiy_new_2005}, and we often call them \emph{Generalized Gabidulin codes}. Recent constructions of MRD codes can be found in \cite{cossidente_nonlinear_2015,horlemann-trautmann_new_2015,lunardon_generalized_2015,sheekey_new_2015}. Also, in \cite{lunardon_preprint} some relationship between linear MRD codes and different geometric objects like {\it linear sets} of a projective space and {\it generalized Segre varieties} were pointed out.

In general, it is difficult to tell whether two rank metric codes with the same parameters are equivalent or not. For quasifields, in particular for semifields, there are several classical invariants such as kernel, left, right and middle nuclei. Originally they are defined as algebraic substructures of quasifields or semifields. However they can also be translated into the language of matrices. For more information on the nuclei of finite semifields, we refer to \cite{marino_nuclei_2012}. These invariants are quite useful in telling the equivalence between two semifields, and many classification results on semifields are also based on certain assumptions on the sizes of their nuclei; see \cite{marino_nuclei_2012,marino_towards_2011,menichetti_kaplansky_1977,menichetti_n-dimensional_1996} for instance. Hence it is quite natural to ask whether there are also such invariants for other rank metric codes, especially for MRD codes and DHO-sets.

The organization and the main results of this paper are as follows: In Section \ref{se:pre}, we introduce several important concepts including the equivalence on rank metric codes together with translation structures. In Section \ref{se:translation_structures}, we associate with a rank metric code $\C$ a point-line incidence translation structure $\scrT(\C)$, i.e., an incidence structure with an equivalence relation defined on the set of lines and with a group acting sharply transitively on its points. We investigate properties of the kernel $K$ of such an incidence structure. In Section \ref{se:nuclei}, the {\it middle nucleus} and the {\it right nucleus} of a linear rank metric code is introduced and proved to be invariants under codes equivalence . Relations between the middle nucleus and the right one of a rank metric code is investigated. In Section \ref{se:MRD}, we look at the kernel and the nuclei of an MRD code $\C\subseteq\F_q^{m\times n}$. We show that its kernel is
 
$\F_q$ under the condition that its minimum distance $d<\min \{m,n\}$ or $\gcd(m,n)=1$. Moreover, we also prove that the middle nucleus of $\C$ is always a finite field and its right nucleus is a finite field if $\max\{d,m-d+2\} \geqslant \left\lfloor \frac{n}{2} \right\rfloor +1$. For the case $m=n$, we determine the middle (right) nuclei of generalized (twisted) Gabidulin codes. 

In Section \ref{se:DHO}, we introduce dimensional dual hyperovals and associated DHO-sets. We deal with some related concepts  as well as the opposite operation $\circ$ and the adjoint operation $\dagger$ defined on a DHO-set. We observe that, by choosing an appropriate bases, this latter operation gives rise to the adjoint code $\cD^{\top}$ of $\cD$. We completely determine the kernel of the translation structure derived from an arbitrary DHO. Finally, we concentrate on splitting bilinear DHOs $\bD$. For the DHO-set $\cD$ associated with such a $\bD$,  we determine the middle (right) nuclei of $\cD^k$ for $k\in \{\circ, \top, \circ\top, \top\circ, \top\circ\top  \}$.

\section{Preliminaries}\label{se:pre}
In this section, we introduce several important concepts and results on rank metric codes and basic facts on translation structures.

First, let us fix several notations. For any matrix $M$, we use $M^t$ to denote the transpose of $M$ and $\rk(M)$ is the rank of $M$. We also use ${O}_{m,n}$ to denote an $m\times n$ zero matrix over a field. If the numbers of rows and columns are clear from the context, we simply write it as $O$. We always use Latin letters in bold, such as $\bx,\by, {\bf z}$ to represent (row) vectors.

Let $\C$ be a rank metric code in $\K^{m\times n}$.  The \emph{adjoint code} of $\C$ is the code 
\[\C^{\top}:=\{X^t : X \in \C\}.\]

Let $\langle \cdot, \cdot \rangle$ be the symmetric bilinear form on the set of $m\times n$ matrices defined by
\[\langle M,N\rangle:= \Tr(MN^t).\]
 The \emph{Delsarte dual code} of a $\K$-linear code $\C$ is 
\[\C^\perp :=\{M\in \K^{m\times n}:\langle M,N \rangle=0\text{ for all } N\in \C  \}.\]

One important result proved by Delsarte \cite{delsarte_bilinear_1978} is that the Delsarte dual code of a linear MRD code is still MRD. Also, if $d>1$, then
\begin{equation}\label{eq:delsarte_dual_distance}
	d({\C}^{\perp})=\text{min}\{m,n\}-d+2.
\end{equation}
For the trivial case $d=1$, $\C=\K^{m\times n}$ and $\C^\perp$ consists of a zero matrix.

For any matrix $M$ over a field $\K$ and $\gamma\in \Aut(\K)$, we define $M^{\gamma}=(m_{ij}^{\gamma})$.

Let $m,n$ be two integers larger than $1$. An \emph{isometry} on $\K^{m\times n}$ is a bijection which preserves the rank distance. In \cite[Theorem 3.4]{wan_geometry_1996}, it is proved that if $\varphi$ is an isometry on $\K^{m\times n}$, then there are $A\in \GL(m,\K)$, $B\in \GL(n,\K)$, $C\in \K^{m\times n}$ and $\gamma \in \Aut(\K)$ such that 
\begin{equation}\label{eq:isometry_cond_1}
	\varphi(X)=A X^\gamma B+C
\end{equation}
for all $X\in \K^{m\times n}$, or (when $m=n$) 
\begin{equation}\label{eq:isometry_cond_2}
	\varphi(X)=A (X^t)^\gamma B+C
\end{equation}
for all $X\in \K^{m\times n}$. 

As the isometries on $\K^{m\times n}$ keep the rank distance, following the definition in \cite{de_la_cruz_algebraic_2015} we should use isometry as the equivalence on rank metric codes. However, for convenience, we use the following two definitions in this paper. Two rank metric codes $\C_1$ and $\C_2\subseteq \K^{m\times n}$ are \textit{equivalent} if there are $A\in \GL(m,\K)$, $B\in \GL(n,\K)$, $C\in \K^{m\times n}$ and $\gamma \in \Aut(\K)$ such that 
\begin{equation}\label{eq:equivalence_def}
	\C_2=\{AX^{\gamma}B + C : X \in \C_1\}.
\end{equation}  
When $m=n$, we say that $\C_1$ and $\C_2$ are \textit{strongly equivalent} if $\C_2$ is equivalent either to $\C_1$ or to $\C^{\top}_1$. Therefore, if $m\neq n$, isometry and equivalence are the same; otherwise $m=n$, isometry is the same as strong equivalence. 

An equivalence map from a rank metric code $\C$ to itself is called an \emph{automorphism}. All automorphisms together form the \emph{automorphism group} of $\C$.

When $\C_1$ and $\C_2$ are linear, by letting $X=O$ in \eqref{eq:equivalence_def} we see that $C\in \C_2$ and $\C_2-C:=\{Y-C : Y\in \C_2  \}=\C_2$, which means that we may always assume that $C=O$.

The first example of a linear MRD code of $m\times n$ matrices existing for arbitrary value of the minimum distance $d$, was exhibited by Delsarte in \cite{delsarte_bilinear_1978} and independently by Gabidulin in \cite{gabidulin_MRD_1985}, and it was later generalized by Kshevetskiy and Gabidulin in \cite{kshevetskiy_new_2005}. We often call them (generalized) Gabidulin codes.

Precisely, a generalized Gabidulin code is defined as follows: It is well-known that, under a given basis of $\F_{q^n}$ over $\F_q$,  each element $a$ of $\F_{q^n}$ can be written as a (column) vector $\bv(a)$ in $\F_{q}^n$. Let $\alpha_1,\dots,\alpha_m$ be a set of linear independent elements of $\F_{q^n}$ over $\F_q$, where $m\le n$. Then
\begin{equation}\label{eq:mn_MRD}
\left\{ \left(\bv(f(\alpha_1)), \dots, \bv(f(\alpha_m))\right)^t: f\in \cG_{k,s}
  \right\}
\end{equation}
is the original generalized Gabidulin code, where 
\begin{equation}\label{eq:GG}
	\cG_{k,s} = \{a_0 x + a_1 x^{q^{s}} + \dots +a_{k-1} x^{q^{s(k-1)}}: a_0,a_1,\dots, a_{k-1}\in \F_{q^n} \},
\end{equation} with $n,k,s\in \Z^+$ satisfying $k<n$ and $\gcd(n,s)=1$.
To get the minimum distance of this code, we only have to look at the number of the roots of each $f\in \cG_{k,s}$.

All members of $\cG_{k,s}$ are of the form $f(x)= \sum_{i=0}^{n-1}a_i x^{q^i}$, where $a_i\in \F_{q^n}$. A polynomial of this form is called a \emph{linearized polynomial} (also a $q$-polynomial because its exponents are all powers of $q$). They are equivalent to $\F_q$-linear transformations from $\F_{q^n}$ to itself, i.e., elements of $\mathbb E=\End_{\F_q}(\F_{q^n})$. We refer to \cite{lidl_finite_1997} for their basic properties.

A \emph{semifield} $\mathbb{S}$ is an algebraic structure satisfying all the axioms of a skewfield except (possibly) the associative law of multiplication. It is not difficult to show that the additive group of a semifield $\mathbb{S}$ is an elementary abelian group; see \cite{knuth_finite_1965}. The additive order of the nonzero elements in $\mathbb{S}$ is called the characteristic of $\mathbb{S}$. Hence, any finite semifield can be represented by $(\mathbb{F}_q, +, *)$ with a prime power $q$. Here $(\mathbb{F}_q, +)$ is the additive group of the finite field $\mathbb{F}_q$ and $x*y=\omega(x,y)$, where $\omega$ is a mapping from $\mathbb{F}_q\times \mathbb{F}_q$ to $\mathbb{F}_q$ satisfying that
\begin{align*}
	(x+y)*z &= x*z + y*z,\\
	x*(y+z) &= x*y + x*z
\end{align*}
for all $x,y,z\in \F_{q}$. That means the map $x\mapsto x*y$ as well as $x\mapsto y*x$ also give rise to two linearized polynomials over a certain subfield of $\F_q$. By definition, these two maps must be invertible for $y\ne 0$. Hence, from them we can derive two MRD codes consisting of $q-1$ nondegenerate matrices with the zero matrix. For instance, if we take the finite field $\F_{p^n}$ which is obviously a semifield, then we can get a set of $p^n$ matrices in $\F_p^{n\times n}$ defined by the (left, right) multiplication in $\F_{p^n}$.

The left, middle and right nucleus of a semifield $\bbS$ are the following subsets:
\begin{align*}
  N_l(\mathbb{S})=\{a\in \mathbb{S}: (a*x)*y=a*(x*y) \text{ for all }x,y\in \mathbb{S}\},\\
  N_m(\mathbb{S})=\{a\in \mathbb{S}: (x*a)*y=x*(a*y) \text{ for all }x,y\in \mathbb{S}\},\\
  N_r(\mathbb{S})=\{a\in \mathbb{S}: (x*y)*a=x*(y*a) \text{ for all }x,y\in \mathbb{S}\}.
\end{align*}

For a rank metric code $\C\in \K^{m\times n}$ provided that $\C$ is finite, the \emph{rank weight distribution} of $\C$ is a sequence of numbers 
\[A_j:=\# \{M: M\in\C,  \rk(M)=j \}\] 
for $j=0,1,\dots, \min\{m,n\}$. In general, it is difficult to determine the rank weight distribution of a given code. However, MRD codes with the same parameters have the same rank weight distribution which is completely known. Without loss of generality, we assume that $n\geqslant m$ and $\C$ is an MRD code in $\F_q^{m\times n}$ with minimum distance $d$. Of course $A_j=0$ for $j< d$. In \cite{delsarte_bilinear_1978,gabidulin_MRD_1985}, it is proved that
\begin{equation}\label{eq:MRD_weightdistribution}
	A_{d+\ell}= {m \brack d+\ell}_q \sum_{t=0}^\ell(-1)^{t-\ell} {\ell+d \brack \ell-t}_q q^{\binom{\ell-t}{2}} \left(q^{n(t+1)}-1\right),
\end{equation}
for $\ell=0,1,\dots, n-d$, where ${m \brack j}_q$ is the Gaussian binomial coefficient.  In fact, we can prove the following result without doing complicated calculation of \eqref{eq:MRD_weightdistribution}.
\begin{lemma}\label{lm:MRD_weight_all}
	Let $\C$ be an MRD code in $\F_q^{m\times n}$ with minimum distance $d$. Assume that ${O}\in \C$. For any $0\leqslant \ell \leqslant m-d$, we have $A_{d+\ell}>0$, i.e.\ there always exists at least one matrix $C\in \C$ such that $\rk(C)=d+\ell$.
\end{lemma}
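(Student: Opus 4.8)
The plan is to argue by induction on $m-d$, using the operation of ``shortening'' an MRD code along a hyperplane of its column space; as in the statement I assume $n\geqslant m$, so $\#\C=q^{n(m-d+1)}$. The base case is $d=m$: here $\#\C=q^n$ and every nonzero codeword has rank exactly $m$, so $A_m=q^n-1>0$ and the range $0\leqslant\ell\leqslant m-d$ is just $\ell=0$. For the inductive step I would separate the top rank $r=m$ from the ranks $d\leqslant r\leqslant m-1$: the latter come from the induction hypothesis applied to a shortened code, the former from a short counting argument.

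The structural fact I would isolate first is that $\C$ is a complete set of coset representatives for a suitable additive subgroup. Fix a subspace $W\subseteq\F_q^m$ with $\dim W=d-1$, and let $W^n\subseteq\F_q^{m\times n}$ denote the set of matrices all of whose columns lie in $W$; this is an additive subgroup of size $q^{(d-1)n}$. If two codewords lie in the same coset of $W^n$, their difference has all columns in $W$, hence rank at most $d-1$, forcing them to coincide. Since there are $q^{mn}/q^{(d-1)n}=q^{n(m-d+1)}=\#\C$ cosets, $\C$ meets each coset of $W^n$ exactly once; note this uses only $O\in\C$ and the MRD cardinality, not linearity. Now let $U\subseteq\F_q^m$ be any hyperplane with $W\subseteq U$ and put $\C_U:=\{X\in\C:\text{every column of }X\text{ lies in }U\}=\C\cap U^n$. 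Because $W^n\subseteq U^n$, the group $U^n$ is a union of $q^{n(m-d)}$ cosets of $W^n$, each containing exactly one codeword, so $\#\C_U=q^{n(m-d)}$ exactly. Viewing $U^n\cong\F_q^{(m-1)\times n}$, the code $\C_U$ contains $O$, has minimum distance at least $d$, and has size $q^{n((m-1)-d+1)}$; it therefore meets the Singleton bound and is an MRD code in $\F_q^{(m-1)\times n}$ of minimum distance exactly $d$. Since deleting a common zero row does not change rank, the induction hypothesis (applicable as $(m-1)-d<m-d$) yields codewords of $\C$ of every rank in $[d,m-1]$.

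It remains to produce a codeword of full rank $m$. A matrix has rank at most $m-1$ precisely when its column space is contained in some hyperplane, so $\{X\in\C:\rk(X)\leqslant m-1\}=\bigcup_{U}\C_U$, the union over the $\tfrac{q^m-1}{q-1}$ hyperplanes $U$ of $\F_q^m$. By the previous paragraph $\#\C_U=q^{n(m-d)}$ for each $U$, so the union bound gives $\#\{X\in\C:\rk(X)\leqslant m-1\}\leqslant \tfrac{q^m-1}{q-1}\,q^{n(m-d)}<q^{n}\,q^{n(m-d)}=q^{n(m-d+1)}=\#\C$, where I used $\tfrac{q^m-1}{q-1}<q^m\leqslant q^n$. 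Hence $A_m=\#\C-\#\{X\in\C:\rk(X)\leqslant m-1\}>0$, completing the induction and giving $A_{d+\ell}>0$ for all $0\leqslant\ell\leqslant m-d$.

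The main obstacle, and the step I would develop most carefully, is the exact identity $\#\C_U=q^{n(m-d)}$: the upper bound is merely the Singleton bound, but the matching lower bound — equivalently, that shortening to a hyperplane again yields an MRD code — is what makes the induction close. I would derive it from the coset-representative property rather than from the weight-distribution formula \eqref{eq:MRD_weightdistribution}; the one delicate point is to choose the auxiliary subspace $W$ \emph{inside} the hyperplane $U$, so that $U^n$ decomposes into whole $W^n$-cosets and the count is forced. Because every step uses only $O\in\C$ and $\#\C$, the argument applies equally to possibly nonlinear MRD codes.
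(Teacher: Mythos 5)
Your proof is correct, but it takes a genuinely different route from the paper's. The paper's proof is short because it outsources the work: it invokes Delsarte's result that all MRD codes with the same parameters have the same rank weight distribution, thereby reducing the claim to the single generalized Gabidulin code of \eqref{eq:mn_MRD}, where the nesting $\cG_{k_0,s}\subseteq \cG_{k_0+1,s}$ plus the value of the minimum distance runs an induction on $k$. Your argument is instead self-contained and elementary: you establish from scratch that an MRD code with minimum distance $d$ is a transversal of the cosets of the subgroup of matrices whose columns lie in a fixed $(d-1)$-dimensional subspace $W$, deduce the exact count $\#\C_U=q^{n(m-d)}$ and hence that shortening to any hyperplane $U$ produces an MRD code in $\F_q^{(m-1)\times n}$ with minimum distance still $d$ (this drives the induction on $m-d$ and yields ranks $d,\dots,m-1$), and finally extract a full-rank codeword by a union bound over the $(q^m-1)/(q-1)$ hyperplanes. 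All the steps check: the injectivity of $\C$ into the coset space, the Singleton-bound argument pinning $d(\C_U)=d$, and the strict inequality $\frac{q^m-1}{q-1}\, q^{n(m-d)} < q^{n(m-d+1)}$; you also correctly flag the one delicate point, namely that $W$ must be chosen inside each hyperplane $U$ so that $U^n$ splits into whole $W^n$-cosets. What the paper's route buys is brevity, at the cost of relying on the machinery behind \eqref{eq:MRD_weightdistribution} and on the existence of Gabidulin codes for all admissible parameters; what yours buys is independence from both, an argument that visibly applies to nonlinear MRD codes containing $O$ (where equating weight and distance distributions requires distance-invariance, a point the paper's one-line reduction glosses over), and two structural by-products of independent interest: the coset-transversal property of MRD codes and the fact that hyperplane shortenings of MRD codes are again MRD.
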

\begin{proof}
	As all MRD codes with the same parameters have the same rank distribution,
	we only have to look at the code defined by \eqref{eq:mn_MRD}. Let us denote this code by $\C_k$ where $k= m-d+1$.
	 
	Clearly, for $k=1$, all matrices in $\C_1$ are of full rank. Assume that our lemma holds for $\C_{k_0}$. As $\cG_{k_0,s}\subseteq\cG_{k_0+1,s}$, there exists matrix of rank $r$ in $\C_{k_0+1}$ for $r=m, m-1, \dots, m-k_0+1$. On the other hand, $\C_{k_0+1}$ is an MRD code which means that there must be matrices of rank $m-k_0$ in it. Hence the lemma also holds for $\C_{k_0+1}$. By induction, we complete the proof.
\end{proof}

Finally we turn to the introduction of a particular incidence structure which is called a translation structure.

Let $\mathcal{P}$ be a nonempty set, whose elements are called \emph{points}, and let $\mathcal L$ be a family of subsets of $\mathcal P$, whose elements are called \emph{lines}  or \emph{blocks}. The pair $(\mathcal P, \mathcal L)$ forms an \emph{incidence structure}. A permutation on $\mathcal P$ is called a \emph{collineation} of the incidence structure $(\mathcal P, \mathcal L)$, if it is also a permutation on $\mathcal L$ and preserves the incidence relation.

An incidence structure $\T=({\mathcal P},{\mathcal L})$ with {\it parallelism} is a point-line geometry
endowed with an equivalence relation defined on the set $\mathcal L$ of lines. We denote this relation with the symbol $||$. A {\it translation} of $\T$ is a collineation $\tau$ such that $L^{\tau}||L$
for all lines $L$ of $\T.$ The translations of $\T$ form a group $T$.
We call $(\T,T)$ a {\em translation structure} if
\begin{enumerate}[label=(\alph*)]
\item	the group $T$ acts sharply transitively on the points of $\T$;
\item if $L$ is a line of $\T,$ then the stabilizer $T_L$ of $L$ in $T$ is transitive on the points of $L$.
\end{enumerate}

The group $T$ is called the \emph{translation group} of $\T$. We say that $\T$ is a \emph{central translation structure} when $T$ is abelian. Two translation structures $\T_1$ and $\T_2$ are said to be \emph{isomorphic} if they are isomorphic as incidence structures, i.e., there is a one-to-one map $\sigma$ from the points (lines) of $\T_1$ to the points (lines) of $\T_2$ such that a point $x$ is in a line $L$ if and only if $\sigma(x) $ is in $\sigma(L)$.

Translation planes are classical examples of a translation structure in which two points are incident with a unique line. Translation structures were introduced by Andr\'{e} in \cite{andre_uber_1961}; see \cite{bader_desarguesian_2010} too. In \cite{andre_uber_1961}, the following canonical representation is given for $(\T,T)$.

Let $x$ be a fixed point of $\T$. For any line $L$ incident with $x$,  define $T_{L}=\{\tau \in T : L^{\tau}=L\}$ and put ${\mathcal S}=\{T_{L} : L \text{ is incident with } x\}.$ 

For each line $M$ of $\T$ there is an element $\tau$ of $T$ and a line $L$ incident with $x$ such that $M=L^{ \tau}.$ Thus the coset $T_{L}\tau$ is the set of the elements of $T$ which map $x$ to a point of $M$ and for each point $y$ of $M$ there is exactly one element $\mu$ of $T_{L} \tau$ such that $x ^{\mu}=y.$

Let $S(T,{\mathcal S})$ be the point-line structure whose points are the elements of $T$ and whose lines are the cosets of elements of ${\mathcal S}.$ For each point $y,$ let $\tau_y$ be the element of $T$ which maps $x$ to $y$ and let $\beta_{x}$ be the map from $\T$ to $S(T, {\mathcal S})$ defined by $y \mapsto \tau_y$ and $M \mapsto T_{L}\tau_{y}$ if and only if $M=L ^{\tau_y}$. Then $\beta_{x}$ is an isomorphism between $\T$ and $S(T,{\mathcal S})$. It is worth noticing that the construction does not depend, up to isomorphism, on the choice of the point $x$. 

We say that the incidence structure $S(T,{\mathcal S})$ satisfies the \emph{covering property}, if
\begin{equation}\label{eq:covering_property}
	\bigcup_{x\in L}T_{L} =T. 
\end{equation} 

The {\it kernel} $K$ of ${\mathcal S}$ is the set of all endomorphisms $\kappa$ of $T$ such that $T_{L}^\kappa \subseteq T_{L}$ for all $L$ incident with $x$. If $T$ is abelian, then $K$ is a  ring (not necessarily commutative) with identity. We will use the exponential notation so that the sum and the multiplication of $K$ are defined by $\tau^{\kappa + \lambda}=\tau^\kappa \tau^\lambda$ and $\tau^{\kappa \lambda}=(\tau^\kappa)^\lambda$ for all $\tau \in T,$ and $\lambda,\kappa \in K$. Then, the group $T$ is a $K-$module and each element of ${\mathcal S}$ is a submodule of $T.$ 




\section{Translation structures from rank metric codes}\label{se:translation_structures}
In this part, we define a translation structure from a set of $m\times n$ matrices. Let $\C$ be a subset of $\K^{m\times n}$ and $\bf0$ denote the zero vector. We define
\begin{align*}
	S(\infty) &:= \{ ({\bf 0}, \by) :  \by \in \K^n  \},\\
	S(M) &:= \{(\bx, \bx M): \bx \in \K^m \}, \text{ for }M\in \C.
\end{align*}

Let $\cS(\C):=\{S(M): M\in \C \cup \{\infty\} \}$. From it we derive an incidence structure  on $\K^{m+n}$, in which the lines are defined by
\begin{align*}
	S(M) + ({\bf 0}, \bf{b }), \quad &\text{for } M\in \C, {\bf b}\in \K^n,\\
	S(\infty) + ({\bf a}, \bf{0 }), \quad &\text{for }{\bf a}\in \K^m. 
\end{align*}
It is routine to verify that this is a translation structure and the additive group of $\K^{m+n}$ is its translation group. 
Let us denote this translation structure by $\scrT(\C)$. 

According to definition, the kernel $K$ of $\scrT(\C)$ is the set of all endomorphisms of the group $(\K^{m+n},+)$ such that $S(M)^\mu \subseteq S(M)$ for every $M\in \C\cup \{\infty\}$. For convenience, we also say that $K$ is the kernel.

\begin{lemma}\label{lm:equivalence_Codes_kernel}
Suppose that $\C_1$ and $\C_2$ are two equivalent rank metric codes in $\K^{m\times n}$. Then the derived translation structures $\scrT(\C_1)$ and $\scrT(\C_2)$ are isomorphic. In particular, their kernels $K_{\C_1}$ and $K_{\C_2}$ are isomorphic.
\end{lemma}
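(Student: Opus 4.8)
The plan is to produce an explicit isomorphism of translation structures by transporting the equivalence map from $\C_1$ to $\C_2$ up to the level of the ambient space $\K^{m+n}$, and then to check that this map sends the canonical lines of $\scrT(\C_1)$ to the canonical lines of $\scrT(\C_2)$. Since $\C_1$ and $\C_2$ are equivalent, there exist $A\in\GL(m,\K)$, $B\in\GL(n,\K)$, $C\in\K^{m\times n}$ and $\gamma\in\Aut(\K)$ with $\C_2=\{AX^{\gamma}B+C:X\in\C_1\}$. My first step is to reduce to the case $C=O$: for linear codes this is immediate from the remark in the preliminaries, but the lemma is stated for general rank metric codes, so I would instead absorb the translation by $C$ into the point-translation group of $\scrT(\C_2)$, noting that translating all lines $S(M)+(\mathbf 0,\mathbf b)$ by a fixed vector is a collineation that preserves parallelism, hence does not change the isomorphism type of the structure. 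This lets me assume $\varphi(X)=AX^{\gamma}B$.

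Next I would write down the candidate map $\sigma$ on points. The natural guess is the block-linear (semilinear) map $\sigma(\bx,\by)=(\bx^{\gamma}(A^{-1})^t,\,\by^{\gamma}B)$ on $\K^{m}\times\K^{n}=\K^{m+n}$; the exact form of the two blocks is what I would pin down by the requirement that $S(X)=\{(\bx,\bx X):\bx\in\K^m\}$ be carried to $S(AX^{\gamma}B)$. The key computation is to verify, for $M\in\C_1$, that a point $(\bx,\bx M)$ is mapped to a point of the form $(\bx',\bx' \, (AM^{\gamma}B))$; this amounts to matching $(\bx M)^{\gamma}B$ against $\bx' (AM^{\gamma}B)$ after substituting $\bx'=\bx^{\gamma}(A^{-1})^t$, and it is exactly the identity $(\bx M)^{\gamma}=\bx^{\gamma}M^{\gamma}$ together with the linear algebra of the substitution. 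I would separately check that $\sigma$ fixes $S(\infty)=\{(\mathbf 0,\by)\}$ (its first block is $\mathbf 0$, so this is clear) and that $\sigma$ is a bijection on $\K^{m+n}$ (immediate, since $A,B$ are invertible and $\gamma$ is an automorphism). Because $\sigma$ is additive-semilinear, it commutes appropriately with the point-translations defining the lines, so the image of a general line $S(M)+(\mathbf 0,\mathbf b)$ is again a canonical line of $\scrT(\C_2)$; this gives the incidence-structure isomorphism $\scrT(\C_1)\cong\scrT(\C_2)$.

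For the final clause I would use this isomorphism to conjugate kernels. By definition the translation group of each structure is $(\K^{m+n},+)$, and $\sigma$ is an isomorphism of the underlying incidence structure carrying the point-set $T_1$ to $T_2$ and the distinguished line-set $\mathcal S(\C_1)$ (through the origin) to $\mathcal S(\C_2)$. Hence the rule $\kappa\mapsto \sigma^{-1}\kappa\,\sigma$ (in the exponential notation of the preliminaries) sends an endomorphism $\kappa$ with $S(M)^{\kappa}\subseteq S(M)$ for all $M\in\C_1\cup\{\infty\}$ to an endomorphism stabilizing every $S(N)$, $N\in\C_2\cup\{\infty\}$, and conversely; this is a ring isomorphism $K_{\C_1}\cong K_{\C_2}$ once one checks it respects the additive and multiplicative structure on $K$ defined via $\tau^{\kappa+\lambda}=\tau^{\kappa}\tau^{\lambda}$ and $\tau^{\kappa\lambda}=(\tau^{\kappa})^{\lambda}$.

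The main obstacle I expect is the bookkeeping in the second paragraph: getting the two blocks of $\sigma$ exactly right so that the multiplication block $B$ (acting on the right on row vectors) and the transpose-inverse $(A^{-1})^t$ in the first block conspire to turn $S(X)$ into $S(AX^{\gamma}B)$ rather than into some transpose or some $B^{t}$-twisted version. This is a routine but error-prone matrix calculation, and the presence of the Frobenius twist $\gamma$ and the right-multiplication convention $\bx M$ means signs and transposes must be tracked carefully; everything else (bijectivity, handling of $S(\infty)$, the translation-by-$C$ reduction, and the kernel conjugation) is formal.
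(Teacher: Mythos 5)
Your overall strategy---an explicit additive-semilinear point map carrying canonical lines to canonical lines, followed by conjugation of kernels---is exactly the paper's proof. However, the step you propose for disposing of the matrix $C$ is wrong and would fail. A translation $p \mapsto p + t$ of $\K^{m+n}$ maps every coset of a subspace $S(M)$ to another coset of the \emph{same} $S(M)$; hence it permutes the line set of $\scrT(\C_2')$, where $\C_2' := \{AX^{\gamma}B : X\in\C_1\}$, i.e.\ it is an \emph{automorphism} of that structure. It can never be an isomorphism from $\scrT(\C_2')$ onto $\scrT(\C_2)$, because these two structures have genuinely different line sets whenever $\C_2' + C \neq \C_2'$ (a coset of $S(M)$ equals a coset of $S(M')$ only if $M = M'$). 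The point is that the matrix $C$ does not act on points as a fixed translation: it acts as the point-dependent shear $(\bx,\by)\mapsto(\bx,\by+\bx C)$, which maps $S(M)$ to $S(M+C)$ and fixes $S(\infty)$. The paper does not reduce to $C=O$ at all; it builds this correction directly into the map, namely $\alpha(\bx,\by)=(\bx^{\gamma}A^{-1},\,\by^{\gamma}B+\bx^{\gamma}A^{-1}C)$, whose last term is exactly the shear transported through $A$ and $\gamma$. Replacing your translation step by this shear (or by the paper's single map $\alpha$) repairs the argument.

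A smaller point: your ``natural guess'' $\sigma(\bx,\by)=(\bx^{\gamma}(A^{-1})^{t},\by^{\gamma}B)$ for the block map is also off. Substituting $\bx'=\bx^{\gamma}(A^{-1})^{t}$ gives second coordinate $(\bx M)^{\gamma}B=\bx' A^{t}M^{\gamma}B$, so you would be sending $S(M)$ to $S(A^{t}M^{\gamma}B)$ rather than $S(AM^{\gamma}B)$; the correct first block is $\bx^{\gamma}A^{-1}$ with no transpose, since row vectors acted on by $A^{-1}$ on the right already produce the left factor $A$ in the slope. You explicitly flagged this block as to-be-determined, so it is a computational slip rather than a conceptual one, but combined with the $C$ issue it means the map you actually need is precisely the paper's $\alpha$. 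Your final step, conjugating kernels via $\kappa\mapsto\sigma^{-1}\kappa\sigma$, is correct and coincides with the paper's statement $K_{\C_2}=\alpha^{-1}K_{\C_1}\alpha$.
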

\begin{proof}
Suppose that ${\C}_1$ and ${\C}_2$ are equivalent. By definition we have that ${\C}_2=\{A M^{\sigma} B +C\colon M \in {\C}_1\}$ where $A\in GL(m,\K)$ and $B \in GL(n,\K)$ are nonsingular, $C\in \K^{m\times n}$ and $\sigma \in \Aut(\K)$.  The semilinear map $$\alpha \,:\, ({\bf x},{\bf y}) \in \K^m \times \K^n \mapsto ({\bf x}^{\sigma}A^{-1},{\bf y}^{\sigma}B+{\bf x}^{\sigma}A^{-1}C) \in \K^m \times \K^n,$$ is an isomorphism between $\scrT({\C}_1)$ and $\scrT({\C}_2)$ with $K_{{\C}_2} = \alpha^{-1}K_{{\C}_1}\alpha$.
\end{proof}

By the definition of kernel, the following result is easy to get:
\begin{lemma}\label{lm:K_in_Kernel}
	Let $I_{m+n}$ denote the identity matrix of order $m+n$. The set of matrices $\{aI_{m+n} : a\in \K \}$, which forms a field isomorphic to $\K$, belongs to the kernel $K$ of $\scrT(\C)$.
\end{lemma}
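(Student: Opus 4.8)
The plan is to verify directly that each map $\mu_a \colon \bz \mapsto a\bz$ (scalar multiplication by $a \in \K$, written as $aI_{m+n}$ acting on $\K^{m+n}$) is an endomorphism of $(\K^{m+n},+)$ fixing every component $S(M)$ of $\cS(\C)$, and then to check that the assignment $a \mapsto aI_{m+n}$ is a field isomorphism onto its image.

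First I would confirm that $\mu_a$ is a group endomorphism: since $a(\bu + \bv) = a\bu + a\bv$ for all $\bu, \bv \in \K^{m+n}$, the map $aI_{m+n}$ is $\K$-linear, hence in particular additive, so it is an endomorphism of the additive group $(\K^{m+n},+)$. Next I would check the containment $S(M)^{\mu_a} \subseteq S(M)$ for every $M \in \C \cup \{\infty\}$. For $M \in \C$, a generic element of $S(M)$ is $(\bx, \bx M)$ with $\bx \in \K^m$; applying $\mu_a$ gives $a(\bx, \bx M) = (a\bx, a\bx M) = (a\bx, (a\bx)M)$, which is again of the form $(\bx', \bx' M)$ with $\bx' = a\bx \in \K^m$, so it lies in $S(M)$. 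For $M = \infty$, a generic element of $S(\infty)$ is $(\mathbf{0}, \by)$ with $\by \in \K^n$, and $a(\mathbf{0}, \by) = (\mathbf{0}, a\by) \in S(\infty)$. Thus $aI_{m+n}$ belongs to $K$ by the definition of the kernel recalled just before the statement.

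It remains to see that $\{aI_{m+n} : a \in \K\}$ is a subfield of $K$ isomorphic to $\K$. Using the exponential notation fixed in Section \ref{se:pre}, where the ring operations on $K$ satisfy $\tau^{\kappa + \lambda} = \tau^{\kappa}\tau^{\lambda}$ and $\tau^{\kappa\lambda} = (\tau^{\kappa})^{\lambda}$, I would verify that $a \mapsto aI_{m+n}$ respects these: addition of scalars corresponds to composition of the additive actions, i.e. $\bz^{aI_{m+n}}\bz^{bI_{m+n}} = a\bz + b\bz = (a+b)\bz$, and multiplication corresponds to iterated scalar action, $(\bz^{aI_{m+n}})^{bI_{m+n}} = b(a\bz) = (ab)\bz$, matching $(a+b)I_{m+n}$ and $(ab)I_{m+n}$ respectively; the identity element $I_{m+n}$ corresponds to $1 \in \K$. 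Hence $a \mapsto aI_{m+n}$ is an injective ring homomorphism from $\K$ into $K$, and since $\K$ is a field its image is a subfield isomorphic to $\K$.

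I do not expect a genuine obstacle here: every step is a one-line verification, and the only point requiring slight care is bookkeeping between the two descriptions of $K$'s ring structure (matrix multiplication versus the exponential notation of Section \ref{se:pre}), which must be kept consistent so that scalar addition and multiplication are matched to the correct operations on endomorphisms.
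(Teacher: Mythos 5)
Your proposal is correct and takes essentially the same approach as the paper: the paper states this lemma without a written proof, remarking only that it is ``easy to get'' from the definition of the kernel, and your direct verification that each $aI_{m+n}$ is an additive endomorphism with $S(M)^{\mu_a} \subseteq S(M)$ for all $M \in \C \cup \{\infty\}$, plus the check that $a \mapsto aI_{m+n}$ is an injective ring homomorphism, supplies exactly the routine details the paper omits. The only implicit ingredient worth flagging is that the step $a(\bx M) = (a\bx)M$ uses commutativity of $\K$, which holds since $\K$ is assumed to be a field.
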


By Lemma \ref{lm:K_in_Kernel}, the field $\K$ is in the kernel $K$ of $\scrT(\C)$. It is interesting and natural to ask whether $K$ is necessarily a field and whether $K$ contains some extra elements. We proceed to investigate these two questions in the rest part of this section.

\begin{lemma}\label{lm:kernel_N1_N2}
	Assume that the zero matrix is in $\C$. Then each element in the kernel $K$ of $\scrT(\C)$ can be expressed in the form
		\[\left(
			  \begin{array}{cc}
			    N_1 & O_{m,n}    \\
			    O_{n,m} & N_2 \\
			  \end{array}
			\right),\]
		where $N_1\in \End((\K^m,+))$, $N_2\in \End((\K^n,+))$ and $O_{m,n}$ (resp.\ $ O_{n,m}$) denotes the zero map in $\Hom((\K^m,+),(\K^n,+))$ (resp. $\Hom((\K^n,+),(\K^m,+))$).
\end{lemma}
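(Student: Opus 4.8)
The plan is to show that an arbitrary kernel element, viewed as an endomorphism $\mu$ of $(\K^{m+n},+)$, must have no ``cross terms'' linking the first $m$ coordinates with the last $n$. Since $\mu$ is an additive endomorphism of $\K^m\times \K^n$, I would write it in block form as
\[
\mu=\left(\begin{array}{cc} N_1 & P \\ Q & N_2 \end{array}\right),
\]
where $N_1\in\End((\K^m,+))$, $N_2\in\End((\K^n,+))$, $P\in\Hom((\K^n,+),(\K^m,+))$, and $Q\in\Hom((\K^m,+),(\K^n,+))$, so that $(\bx,\by)^\mu=(\bx N_1+\by Q,\ \bx P+\by N_2)$. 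The goal is then exactly to prove $P=O$ and $Q=O$, which gives the claimed diagonal shape. The defining condition on a kernel element is $S(M)^\mu\subseteq S(M)$ for every $M\in\C\cup\{\infty\}$, and I would exploit this for the two special lines $S(\infty)$ and $S(O)$, whose presence is guaranteed because the zero matrix lies in $\C$ by hypothesis.

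First I would use the line $S(\infty)=\{({\bf 0},\by):\by\in\K^n\}$. Applying $\mu$ to a generic point $({\bf 0},\by)$ gives $(\by Q,\by N_2)$, and the containment $S(\infty)^\mu\subseteq S(\infty)$ forces the first component $\by Q$ to vanish for all $\by\in\K^n$; hence $Q=O$. Next I would use the line $S(O)=\{(\bx,\bx O):\bx\in\K^m\}=\{(\bx,{\bf 0}):\bx\in\K^m\}$, which is a line precisely because $O\in\C$. Applying $\mu$ to $(\bx,{\bf 0})$ yields $(\bx N_1,\bx P)$, and the requirement $S(O)^\mu\subseteq S(O)$ forces the second component $\bx P$ to vanish for all $\bx\in\K^m$, so $P=O$ as well. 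Together these two computations collapse $\mu$ to the block-diagonal form $\mathrm{diag}(N_1,N_2)$ asserted in the statement.

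I do not expect any serious obstacle here: the argument is a direct unwinding of the kernel condition on the two distinguished lines $S(\infty)$ and $S(O)$, and the only hypothesis actually needed beyond the definitions is that $O\in\C$ (so that $S(O)$ is among the lines whose stabilizer structure $\mu$ must respect). If anything, the one point deserving a word of care is the identification of $\mu$ with a block matrix acting on the right on row vectors, consistent with the conventions $S(M)=\{(\bx,\bx M)\}$ and the right-multiplication notation used throughout; once that bookkeeping is fixed, the two vanishing conditions $P=O$ and $Q=O$ drop out immediately, and no further lines $S(M)$ with $M\neq O,\infty$ need to be invoked for this particular lemma.
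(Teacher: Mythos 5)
Your proof is correct and follows essentially the same argument as the paper: write the kernel element in block form and kill the off-diagonal blocks by applying the kernel condition to the two lines $S(\infty)$ and $S(O)$. One purely notational slip: the hom-spaces you declare for $P$ and $Q$ are swapped relative to your action formula (since $\bx P\in\K^n$ you need $P\in\Hom((\K^m,+),(\K^n,+))$ and $Q\in\Hom((\K^n,+),(\K^m,+))$), but all your subsequent computations use the correct convention, so the argument stands as written.
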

\begin{proof}
	Let $\mu$ be an arbitrary element of $K$. As an endomorphism of the additive group of $\K^{m+n}$, $\mu$ can be written as 
	\[\left(
	  \begin{array}{cc}
	    N_1 & N_4 \\
	    N_3 & N_2 \\
	  \end{array}
	\right),\]
	where $N_1\in \End((\K^m,+))$, $N_2\in \End((\K^n,+))$, $N_3\in \Hom((\K^n,+), (\K^m,+))$ and $N_4\in \Hom((\K^m,+), (\K^n,+))$. Note that
	\[S(\infty)^\mu=\{ (\by N_3, \by N_2): \by \in \K^n \}.\]
	Together with $S^\mu(\infty)\subseteq S(\infty)$, we get $\by N_3={\bf 0}$ for every $\by \in \K^n$. Hence $N_3$ is the zero mapping. Similarly we can also show that $N_4=O_{m,n}$ by looking at  $S(O_{m,m})^\mu\subseteq S(O_{m,m})$.	
\end{proof}

\begin{proposition}\label{prop:kernel_adjoint_dual}
	Let $\C$ be a rank metric code containing $\bf 0$.
	\begin{enumerate}[label=(\alph*)]
	\item Let $K$ and $K^\top$ denote the kernels of $\scrT(\C)$ and $\scrT(\C^\top)$ respectively. Then 
	\[K\cap \Aut((\K^{m+n},+))\cong K^\top\cap \Aut((\K^{m+n},+)).\]
	\item Assume that $\C$ is linear. The group of automorphisms of $(\K^{m+n},+)$ stabilizing $\scrT(\C^\perp)$ contains a subgroup which is isomorphic to $K\cap \GL(m+n, \K)$. 
	\end{enumerate}
\end{proposition}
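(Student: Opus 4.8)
For part (a), the plan is to build an explicit isomorphism between the automorphism-part of $K$ and that of $K^\top$ using the transpose map on matrices. By Lemma \ref{lm:kernel_N1_N2}, since $\bf 0 \in \C$, every element of $K$ has block-diagonal form $\mathrm{diag}(N_1,N_2)$ with $N_1\in\End((\K^m,+))$, $N_2\in\End((\K^n,+))$, and the condition $S(M)^\mu\subseteq S(M)$ translates to the functional equation $N_1\,M = M\,N_2$ for all $M\in\C$ (reading $N_1,N_2$ as acting appropriately, and using that $\mu$ sends $(\bx,\bx M)$ to $(\bx N_1, \bx M N_2)$, which lies in $S(M)$ iff $\bx N_1 M = \bx M N_2$). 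The code $\C^\top=\{M^t : M\in\C\}$ has kernel elements $\mathrm{diag}(N_1',N_2')$ satisfying $N_1' M^t = M^t N_2'$ for all $M\in\C$. First I would transpose the relation $N_1 M = M N_2$ to obtain $M^t N_1^t = N_2^t M^t$, i.e. $N_2^t\,M^t = M^t\,N_1^t$; this is exactly the kernel condition for $\C^\top$ with the roles swapped, $N_1'=N_2^t$ and $N_2'=N_1^t$. So the map $\mathrm{diag}(N_1,N_2)\mapsto \mathrm{diag}(N_2^t,N_1^t)$ is the candidate isomorphism. When $N_1,N_2$ are linear (i.e. genuine matrices over $\K$), transposition is an anti-automorphism, so I must check it reverses multiplication correctly; since the swap also exchanges the two blocks, the composite is in fact an (anti-composed-with-swap) homomorphism, and I would verify it is bijective on the automorphism part (where $N_1,N_2$ are invertible, so $N_2^t,N_1^t$ are too). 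The one subtlety is that elements of $K$ are a priori only additive endomorphisms, not $\K$-linear, so the transpose must be understood on $\End((\K^m,+))$ — but restricting to $K\cap\Aut$ and using that these commute with the scalars $a I_{m+n}$ from Lemma \ref{lm:K_in_Kernel} forces $\K$-semilinearity, making the transpose well-defined up to the field automorphism.

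For part (b), the strategy is to exhibit, for each $\mu\in K\cap\GL(m+n,\K)$, an automorphism of $(\K^{m+n},+)$ stabilizing $\scrT(\C^\perp)$, and to check the assignment is an injective homomorphism. Writing $\mu=\mathrm{diag}(N_1,N_2)$ with $N_1\in\GL(m,\K)$, $N_2\in\GL(n,\K)$ and the relation $N_1 M = M N_2$ for all $M\in\C$, I would show that the matrix $\nu=\mathrm{diag}((N_1^t)^{-1}, (N_2^t)^{-1})$ (or a closely related contragredient form) stabilizes $\scrT(\C^\perp)$. The key computation is to pair the relation for $\C$ against the defining bilinear form $\langle M,N\rangle=\Tr(MN^t)$ of the Delsarte dual: if $N_1 M = M N_2$ holds for every $M\in\C$, then for any $N\in\C^\perp$ the element $N_1^{-1} N N_2$ (or the analogous contragredient transform) should again lie in $\C^\perp$, because $\Tr\big((N_1^{-1}N N_2)M^t\big)$ can be rewritten, via the relation, as $\Tr(N\,(\text{something in }\C)^t)=0$. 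Thus the contragredient of $\mu$ furnishes a kernel element of $\scrT(\C^\perp)$, giving the desired subgroup.

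The main obstacle I anticipate is in part (b): correctly matching the functional equation $N_1 M = M N_2$ with the trace-orthogonality that defines $\C^\perp$, so that the chosen transform genuinely maps $\C^\perp$ into itself rather than into some other space. Concretely, I expect to need the identity $\Tr\big((N_1^{-1} N N_2) M^t\big)=\Tr\big(N (N_1 M N_2^{-1})^t\big)$ and then to use $N_1 M = M N_2$ (equivalently $N_1 M N_2^{-1}=M\in\C$) to collapse the inner expression to $\langle N,M\rangle=0$. Pinning down the exact contragredient (whether it is $\mathrm{diag}((N_1^t)^{-1},(N_2^t)^{-1})$ or $\mathrm{diag}(N_1,N_2)$ itself, depending on how the form pairs the two blocks) is the delicate bookkeeping step; once the correct transform is identified, injectivity is immediate from invertibility of $N_1,N_2$, and the homomorphism property follows because both transposition-inverse and the block structure respect composition. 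I would also verify that the resulting $\nu$ is block-diagonal of the form required by Lemma \ref{lm:kernel_N1_N2} applied to $\C^\perp$, confirming it indeed lies in the stabilizing group rather than merely permuting lines incorrectly.
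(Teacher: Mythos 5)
Your overall strategy is the same as the paper's: block-diagonal form from Lemma \ref{lm:kernel_N1_N2}, the relation $N_1M=MN_2$ (equivalently $N_1^{-1}MN_2=M$) for all $M\in\C$, transposition for (a), and a trace computation for (b). However, two of your specific steps fail as written. In (a), your map $\mathrm{diag}(N_1,N_2)\mapsto\mathrm{diag}(N_2^t,N_1^t)$ is an anti-homomorphism, and the block swap does \emph{not} repair this: with $\mu_1=\mathrm{diag}(N_1,N_2)$ and $\mu_2=\mathrm{diag}(P_1,P_2)$, the product $\mu_1\mu_2=\mathrm{diag}(N_1P_1,N_2P_2)$ is sent to $\mathrm{diag}(P_2^tN_2^t,P_1^tN_1^t)$, which is the product of the two images in the \emph{reverse} order. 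So your parenthetical claim that the composite is a homomorphism is false; the conclusion survives only because an anti-isomorphism of groups composed with inversion is an isomorphism, and that composition is exactly the paper's map $\mu\mapsto\mathrm{diag}\bigl((N_2^t)^{-1},(N_1^t)^{-1}\bigr)$. Also, your proposed fix for the linearity subtlety is circular: an additive map commutes with the scalars $aI_{m+n}$ \emph{if and only if} it is $\K$-homogeneous, and nothing in the definition of the kernel (a possibly noncommutative ring containing $\K$) grants that commutation; the paper silently makes the same identification of kernel elements with matrices, so this is a shared imprecision rather than a defect specific to your argument.

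In (b) the gap is more substantive: your ``key computation'' rests on the identity $\Tr\bigl((N_1^{-1}NN_2)M^t\bigr)=\Tr\bigl(N(N_1MN_2^{-1})^t\bigr)$, which is not an identity. The correct adjunction rule is $\Tr\bigl((ANB)M^t\bigr)=\Tr\bigl(N(A^tMB^t)^t\bigr)$, so $\Tr\bigl((N_1^{-1}NN_2)M^t\bigr)=\Tr\bigl(N((N_1^t)^{-1}MN_2^t)^t\bigr)$, and the inner matrix $(N_1^t)^{-1}MN_2^t$ is \emph{not} controlled by the kernel relation $N_1^{-1}MN_2=M$ (unless $N_1,N_2$ happen to be symmetric); hence the transform $N\mapsto N_1^{-1}NN_2$ of $\C^\perp$ cannot be collapsed as you propose. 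The transposes must sit in the transform on the dual side. Your candidate matrix $\nu=\mathrm{diag}\bigl((N_1^t)^{-1},(N_2^t)^{-1}\bigr)$ is in fact the right one: it sends $S(N)$ to $S\bigl(N_1^tN(N_2^t)^{-1}\bigr)$, and the verification that this line again belongs to $\scrT(\C^\perp)$ is $\Tr\bigl(M(N_1^tN(N_2^t)^{-1})^t\bigr)=\Tr\bigl(MN_2^{-1}N^tN_1\bigr)=\Tr\bigl((N_1MN_2^{-1})N^t\bigr)=\Tr(MN^t)=0$, using cyclicity of the trace and $N_1MN_2^{-1}=M$. This is, up to taking the inverse transform, exactly the paper's computation (the paper shows $(N_1^t)^{-1}NN_2^t\in\C^\perp$). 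Once this step is repaired, your remaining remarks are sound: the contragredient choice is precisely the one for which $\mu\mapsto\nu$ is a genuine injective homomorphism, since transpose-inverse preserves products.
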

\begin{proof}
	(a). By Lemma \ref{lm:kernel_N1_N2}, we know that an element $\mu$ in $K\cap \Aut(\K^{m+n})$ can be written as
	\[\left(
	  \begin{array}{cc}
	   N_1 & O_{m,n}    \\
	   O_{n,m} & N_2 \\
	  \end{array}
	\right),\]
	where $N_1\in \Aut((\K^m,+))$ and $N_2\in \Aut((\K^n,+))$.
	
	Due to the definition of kernels, for every $M\in \C$ and $\bx\in \K^m$,
	\[(\bx, \bx M)^\mu =(\bx N_1, \bx M N_2)=(\by, \by N_1^{-1} M N_2)=(\by, \by M),\]
	where $\by=\bx N_1$. Hence
	\[N_1^{-1} M N_2=M,\]
	which implies that 
	\begin{equation}\label{eq:N2_N1_Mt}
		N_2^t M^t (N_1^t)^{-1} = M^t.
	\end{equation}
	Hence 
	\[\mu':= \left(
		 \begin{array}{cc}
		  (N_2^t)^{-1} &     \\
		      & (N_1^t)^{-1} \\
		 \end{array}
	\right)\]
	is in the kernel of $\scrT(\C^\top)$. Therefore the map $\mu\mapsto \mu'$ is a bijection on the kernels of $\scrT(\C)$ and $\scrT(\C^\top)$.
	
	\vspace*{2mm}
	\noindent 	(b). By Lemma \ref{lm:kernel_N1_N2}, we know that an arbitrary element $\mu$ in $K\cap \GL({m+n},\K)$ can be written as
		\[\left(
		  \begin{array}{cc}
		   N_1 & O_{m,n}    \\
		   O_{n,m} & N_2 \\
		  \end{array}
		\right),\]
		where $N_1\in \GL(m,\K)$ and $N_2\in \GL(n,\K)$.
	
	By definition, we again have
	\[N_1^{-1} M N_2=M.\]
	Hence
	\[\Tr(M((N_1^t)^{-1} N N_2^t)^t)=\Tr(MN_2 N^t N_1^{-1})=\Tr(N_1^{-1}M N_2 N^t)=\Tr(M N^t)=0,\]
	for each $M\in \C$ and $N\in \C^\perp$. Therefore, the map $$\tilde{\mu} \colon ({\bf x},{\bf y}) \mapsto ({\bf x}(N_1^{-1})^t,{\bf y}N_2^t) $$ is a bijective $\K$-linear transformation on $\K^{m+n}$ which stabilizes the translation structure $\scrT(\C^\perp)$, because the above calculation shows that if $N\in \C^{\perp}$ then we have $S(N)^{\tilde{\mu}} = S(N_1^t(N(N_2^{-1})^t)$ and $N_1^tN(N_2^{-1})^t) \in \C^{\perp}$. Finally, it is immediate to verify that the map $\mu \mapsto \tilde{\mu}$ is an injective homomorphism from $K\cap GL(m+n,\K)$ into the stabilizer of $\scrT(\C^\perp)$ in $GL(m+n,\K)$.
\end{proof}

\begin{theorem}\label{th:kernel_from_covering}
	Assume that a rank metric code $\C$ contains the zero matrix and	
	\begin{equation}\label{eq:covering_all_vectors}
		\{\bx M : M\in \C\}=\K^n
	\end{equation}
	for each nonzero $\bx \in \K^m$. 
	Then the kernel $K$ of $\scrT(\C)$ is a skewfield and each element of $K$ can be expressed in the form
	\[\left(
		  \begin{array}{cc}
		    N_1 &     \\
		        & N_2 \\
		  \end{array}
		\right),\]
	with $N_1\in \Aut((\K^m,+))$ and $N_2\in \Aut((\K^n,+))$. In particular, if $\K$ is finite, then the kernel $K$ is a finite field containing $\K$, $N_1\in \GL(m,\K)$ and $N_2\in \GL(n,\K)$.
\end{theorem}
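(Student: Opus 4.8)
The plan is to attach to each kernel element the two additive maps $N_1,N_2$ given by Lemma~\ref{lm:kernel_N1_N2}, to read off a single intertwining relation from the stabilization condition, and then to use the covering hypothesis \eqref{eq:covering_all_vectors} twice: once to force injectivity on the $\K^m$-side and once to force surjectivity on the $\K^n$-side. Since $O\in\C$, Lemma~\ref{lm:kernel_N1_N2} gives that every $\mu\in K$ is block diagonal, $\mu=\mathrm{diag}(N_1,N_2)$ with $N_1\in\End((\K^m,+))$ and $N_2\in\End((\K^n,+))$. Writing out $S(M)^\mu\subseteq S(M)$ for $M\in\C$ gives, for every $\bx$,
\[(\bx N_1,\ \bx M N_2)=(\bx N_1,\ (\bx N_1)M),\]
so that
\begin{equation*}
N_1M=MN_2\qquad(M\in\C)\tag{$\ast$}
\end{equation*}
holds (the condition coming from $S(\infty)$ is automatic). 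Recall from the preliminaries that, $T=(\K^{m+n},+)$ being abelian, $K$ is a ring with identity $I_{m+n}$, so the task is to show every nonzero element is invertible.

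Next I would extract the two covering consequences. For injectivity on the left: if $\bx N_1=\mathbf 0$ for some $\bx\ne\mathbf 0$, then $(\ast)$ gives $\bx M N_2=\mathbf 0$ for all $M$, and \eqref{eq:covering_all_vectors} applied to $\bx$ forces $N_2=O$; feeding this back into $(\ast)$ and applying the covering to any $\by N_1\ne\mathbf 0$ forces $N_1=O$, i.e.\ $\mu=0$. Hence for $\mu\ne0$ the upper block $N_1$ is injective. This already shows $K$ has no zero divisors: if $\mu\nu=0$ with $\mu,\nu\ne0$, comparing upper blocks gives $N_1^{(\mu)}N_1^{(\nu)}=O$, so $\mathrm{Im}\,N_1^{(\mu)}\subseteq\ker N_1^{(\nu)}=\{\mathbf 0\}$ and $\mu=0$, a contradiction; thus $K$ is a domain. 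For surjectivity on the right: for $\mu\ne0$ and any $\bx\ne\mathbf 0$ we have $\bx N_1\ne\mathbf 0$, so by \eqref{eq:covering_all_vectors} the set $\{(\bx N_1)M:M\in\C\}$ is all of $\K^n$; but $(\bx N_1)M=(\bx M)N_2\in\mathrm{Im}\,N_2$ by $(\ast)$, whence $N_2$ is onto.

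When $\K=\F_q$ this finishes the argument cleanly. The map $\mu\mapsto N_2$ embeds $K$ into the finite ring $\End((\F_q^n,+))$, so $K$ is a finite domain, hence a division ring, and by Wedderburn a finite field; it contains the central copy of $\K$ from Lemma~\ref{lm:K_in_Kernel}. For nonzero $\mu$, invertibility in $K$ forces $N_1,N_2$ bijective. Moreover, $K$ being commutative, the scalar $aI_{m+n}$ is central, so $\mu(aI)=(aI)\mu$ translates into $(av)^\mu=a(v^\mu)$, i.e.\ $\mu$ is $\K$-linear; thus $N_1\in\GL(m,\K)$ and $N_2\in\GL(n,\K)$, and $K$ is a finite field containing $\K$.

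The general (possibly infinite) field case is where the real difficulty sits, and I expect it to be the main obstacle. The covering hypothesis is one-sided — it controls $\bx\mapsto\bx M$ onto $\K^n$ — so it yields only $N_1$ injective and $N_2$ surjective, and in infinite dimension (where additive endomorphisms of $\K^n$ need not be $\K$-linear) these do not by themselves give bijectivity. The remaining point is to upgrade a nonzero $\mu$ to a genuine bijection, i.e.\ to prove $N_1$ onto and $N_2$ injective; I would attempt this by using the covering in full through $(\ast)$: given a nonzero $(\mathbf 0,\by)\in\ker\mu$ one transports a point $(\bx,\bx M)$ to $(\bx,\bx M+\by)$, which again lies on some component $S(M')$ by covering, and comparing the two images under $\mu$ produces the constraint $(\bx N_1)(M-M')=\mathbf 0$ with $\bx(M-M')=\by$, from which I would aim to squeeze a contradiction with the injectivity of $N_1$. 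Once bijectivity is secured the conclusion is immediate: from $(\ast)$ one gets $N_1^{-1}M=MN_2^{-1}$, so $\mu^{-1}=\mathrm{diag}(N_1^{-1},N_2^{-1})$ again stabilizes every $S(M)$ and lies in $K$; hence every nonzero element of $K$ is invertible, $K$ is a skewfield, and its elements have the stated block-diagonal shape with $N_1\in\Aut((\K^m,+))$ and $N_2\in\Aut((\K^n,+))$.
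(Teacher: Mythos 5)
Your proposal follows the paper's own proof almost step for step: the block-diagonal form of kernel elements from Lemma~\ref{lm:kernel_N1_N2}, the relation $N_1M=MN_2$ read off from $S(M)^\mu\subseteq S(M)$, and the two uses of \eqref{eq:covering_all_vectors} --- first to show that a nonzero $\mu$ has injective block $N_1$ (via $N_2=O\Rightarrow N_1=O\Rightarrow\mu=0$), then to show that $N_2$ is surjective --- are essentially the paper's steps. Your finite-field endgame is in fact tighter than the paper's: embedding $K$ through $\mu\mapsto N_2$ into the finite ring $\End((\F_q^n,+))$, getting a finite domain, invoking Wedderburn, and then deducing bijectivity of $N_1,N_2$ and $\K$-linearity from invertibility inside $K$ together with the scalars $aI_{m+n}$ of Lemma~\ref{lm:K_in_Kernel}, is complete and correct, and it suffices for every later application in the paper (MRD codes over $\F_q$, DHO-sets over $\F_2$).

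The gap you flag over infinite $\K$ is genuine: injectivity of $N_1$ plus surjectivity of $N_2$ do not make $\mu$ invertible, and your transport attempt only produces $(\bx N_1)(M'-M)=\mathbf 0$ while $\bx(M'-M)=\by\neq\mathbf 0$, which is no contradiction, since $M'-M$ need not belong to $\C$ and the theorem imposes no lower bound on the rank of differences. But you should be aware that the paper's own proof has precisely the same hole: it passes from ``$N_1$ not invertible'' to ``$\bx N_1=\mathbf 0$ for some $\bx\neq\mathbf 0$'', and from ``$N_2$ surjective'' to ``$N_2$ invertible''. Both inferences are valid only when injective (resp.\ surjective) additive endomorphisms of $\K^m$ (resp.\ $\K^n$) are automatically bijective, i.e.\ when $\K$ is finite; over an infinite field they fail (the Frobenius map on $\F_p(t)^m$ is additive and injective but not surjective, and surjective non-injective additive maps exist likewise). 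So you have not missed an idea that the paper supplies; rather, you have isolated exactly the point at which the published argument proves the skewfield claim only for finite $\K$, and closing the general case would require either an additional hypothesis (e.g.\ $\K$-linearity of kernel elements) or a genuinely new argument.
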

\begin{proof}
	Let $\mu$ be an arbitrary element of $K$. By Lemma \ref{lm:kernel_N1_N2}, $\mu$ can be written in the form
	\[\left(
	  \begin{array}{cc}
	    N_1 & {O}_{n,m} \\
	    {O}_{m,n} & N_2 \\
	  \end{array}
	\right),\]
	where $N_1\in \End((\K^m,+))$ and $N_2\in \End((\K^n,+))$.
	
	\vspace*{2mm}
	\noindent\textbf{Claim:} Suppose that $\mu$ does not map all elements in $\K^{m + n}$ to the zero vector. Then $N_2$ is not the zero map. 
	
	By way of contradiction, we assume that $N_2={O}_{n,n}$. Then we get
	\begin{equation}\label{eq:S(M)mu}
		S(M)^\mu = \{(\bx N_1, {\bf 0}) : \bx\in \K^m \}\subseteq  S(M), 
	\end{equation}
	for all $M\in \C$. It implies that $\by M = {\bf 0}$ for each $\by \in \{\bx N_1: \bx\in \K^m \}$ and any $M\in \C$. As $N_1\neq {O}_{m,m}$, there exists a nonzero vector ${\bf z}\in \{\bx N_1: \bx\in \K^m \}$. Thus $\{{\bf z}M : M\in \C \}=\{\bf 0\}$. It contradicts \eqref{eq:covering_all_vectors}.
	
	Next we proceed to show that both $N_1$ and $N_2$ are bijection. By way of contradiction, let us assume that $N_1$ is not invertible. There exists a nonzero vector $\bx \in \K^m$ such that $\bx N_1={\bf 0}$. Thus, for any $M\in \C$, 
	\[(\bx  N_1, (\bx M)  N_2)=  (\bf{0,0}),\]
	because of $S(M)^\mu\subseteq S(M)$. By \eqref{eq:covering_all_vectors}, we see that $N_2$ must be a zero map which contradicts the proved claim.
	
	Now we know that $N_1$ is invertible. Hence, for any nonzero vector $\bx \in \K^m$ and, the vector $\by:= \bx N_1$ is also nonzero. Again from $S(M)^\mu \subseteq S(M)$, we get
	\[ (\bx N_1, \bx M N_2) = (\by, \bx M N_2) = (\by, \by M).\]
	By the above equation, we see that the set $\{\bx M N_2: M\in \C \}$ and $\{\by M: M\in \C \}$ must be the same. By \eqref{eq:covering_all_vectors}, we further obtain that
	\[ \K^n = \{\bx M N_2: M\in \C \} = \{{\bf z} N_2: {\bf z} \in \K^n \}.\]
	That means $N_2$ is also invertible.
	
	To summarize, we have proved that $\mu\in K \setminus\{0\}$ is always invertible and clearly the inverse of an element in $K$ also belongs to $K$. Together with the fact that $K$ is a ring, we have shown that $K$ is a skewfield.
	
	When $\K$ is finite, it is clear that $K$ is also finite. Hence $K$ is a finite field. By Lemma \ref{lm:K_in_Kernel}, the set of matrices $\{aI_{m+n} : a\in \K \}$ forms a subfield of $K$ and $\mu$ is now also a $\K$-homomorphism of the vector space $\K^{m+n}$. Therefore $N_1$ and $N_2$ are both nondegenerate matrices over $\K$.
\end{proof}

In fact, when \eqref{eq:covering_all_vectors} does not hold, there exist rank metric codes $\C \subseteq \K^{m\times n}$ such that the kernel $K$ of $\scrT(\C)$ is not a skewfield.
\begin{example}\label{ex:kernel_not_field}
{\rm Let $\C$ be a set of matrices, each of which satisfies that the entries in its last row and last column are all $0$. It is straightforward to verify that $\scrT(\C)$ does not satisfy the covering property and its kernel $K$ contains the matrices
	\[ L_{a,b}=\left(
	  \begin{array}{cccc}
	    a & \cdots &  0 & 0 \\
	    \vdots  & \ddots & \vdots & 0 \\
	    0 & \cdots & a & 0 \\
	    0 & 0 & 0 & b \\
	  \end{array}
	\right) \]
	for $a,b\in \K$. As $L_{a,0}\cdot L_{0,b}$ equals the zero matrix, its kernel $K$ cannot be a skewfield.}
\end{example}
 
By Proposition \ref{prop:kernel_adjoint_dual} (a) and Theorem \ref{th:kernel_from_covering}, we can directly get the following result.
\begin{corollary}\label{coro:C1_and_hatC_1}
	Let $\C$ be a rank metric code in $\K^{m\times n}$. Assume that $\C$ contains the zero matrix and \eqref{eq:covering_all_vectors} holds for $\C$. Then there is a bijection between the kernels of $\scrT(\C)$ and $\scrT(\C^{\top})$.
\end{corollary}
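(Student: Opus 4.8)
The plan is to combine the two cited results and then make the comparison between the kernels fully explicit. First I would invoke Theorem \ref{th:kernel_from_covering}: since $\C$ contains $O$ and satisfies the covering hypothesis \eqref{eq:covering_all_vectors}, the kernel $K$ of $\scrT(\C)$ is a skewfield. Hence every nonzero element of $K$ is invertible, so $K\setminus\{0\}=K\cap\Aut((\K^{m+n},+))$. Proposition \ref{prop:kernel_adjoint_dual}(a) then provides a bijection $\phi$ between $K\cap\Aut((\K^{m+n},+))$ and $K^\top\cap\Aut((\K^{m+n},+))$. Extending $\phi$ by $0\mapsto O$ gives an injection $K\hookrightarrow K^\top$ with image $(K^\top\cap\Aut((\K^{m+n},+)))\cup\{O\}$, and the remaining task is to show that this injection is onto, i.e.\ that every nonzero element of $K^\top$ is invertible.

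That surjectivity is the step I expect to be the crux. A priori $\C^\top$ need not satisfy \eqref{eq:covering_all_vectors}, so I cannot simply re-apply Theorem \ref{th:kernel_from_covering} to $\C^\top$ to conclude that $K^\top$ is a skewfield. Instead of counting, I would exhibit the bijection directly. By Lemma \ref{lm:kernel_N1_N2}, which applies since $O\in\C$ and $O\in\C^\top$, every element of $K$ is block diagonal $\left(\begin{smallmatrix}N_1 & \\ & N_2\end{smallmatrix}\right)$; unwinding $S(M)^\mu\subseteq S(M)$ shows that such a matrix lies in $K$ exactly when $N_1M=MN_2$ for all $M\in\C$, and the analogous computation for $\scrT(\C^\top)$ shows that $\left(\begin{smallmatrix}N_1' & \\ & N_2'\end{smallmatrix}\right)$ lies in $K^\top$ exactly when $N_1'M^t=M^tN_2'$ for all $M\in\C$.

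The key observation is that, upon transposing, $N_1M=MN_2$ for all $M$ is equivalent to $N_2^tM^t=M^tN_1^t$ for all $M$. Therefore the transpose-and-swap map
\[\Psi\colon \left(\begin{array}{cc}N_1 & \\ & N_2\end{array}\right)\longmapsto \left(\begin{array}{cc}N_2^t & \\ & N_1^t\end{array}\right)\]
sends $K$ into $K^\top$, is visibly additive and bijective (its inverse is the same recipe read from $\C^\top$ back to $\C$), and reverses products, so it is a ring anti-isomorphism $K\to K^\top$. This already yields the asserted bijection between the two kernels. It also disposes of the obstacle above: as $K$ is a skewfield, $K^\top\cong K^{\mathrm{op}}$ is a skewfield as well, so $\Psi$ maps $K\setminus\{0\}$ onto $K^\top\setminus\{0\}$ and restricts on the invertible parts to an inverse-twisted version of the bijection $\phi$ of Proposition \ref{prop:kernel_adjoint_dual}(a), confirming that the injection $K\hookrightarrow K^\top$ of the first paragraph is indeed surjective.
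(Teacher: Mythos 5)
Your proof is correct, and although it reuses the paper's two ingredients, its key step is genuinely different from the paper's argument. The paper disposes of this corollary in one line: Theorem \ref{th:kernel_from_covering} gives that $K$ is a skewfield, so $K\setminus\{0\}=K\cap\Aut((\K^{m+n},+))$, and Proposition \ref{prop:kernel_adjoint_dual}(a) then gives a bijection onto $K^{\top}\cap\Aut((\K^{m+n},+))$. Read literally, this only yields an injection of $K$ onto $\bigl(K^{\top}\cap\Aut((\K^{m+n},+))\bigr)\cup\{0\}$; it tacitly assumes that every nonzero element of $K^{\top}$ is invertible, and, as you correctly note, this cannot be obtained by applying Theorem \ref{th:kernel_from_covering} to $\C^{\top}$, because \eqref{eq:covering_all_vectors} is genuinely asymmetric in rows and columns and need not pass to $\C^{\top}$. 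Your transpose-and-swap map $\Psi$ is exactly what closes this point: since it involves no inversion, it is defined on all of $K$ and of $K^{\top}$, it is additive and anti-multiplicative, and applying the same recipe to $\C^{\top}$ inverts it, so it is a ring anti-isomorphism $K\to K^{\top}$. This buys three things the paper's one-liner does not make explicit: the bijection itself needs only $O\in\C$ and Lemma \ref{lm:kernel_N1_N2}, with the covering hypothesis entering only to make $K$ a skewfield; $K^{\top}\cong K^{\mathrm{op}}$ is then automatically a skewfield, which settles the surjectivity issue; and the map of Proposition \ref{prop:kernel_adjoint_dual}(a) is recovered on the invertible parts as $\Psi$ composed with inversion. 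The one caveat --- shared with the paper's own proof of Proposition \ref{prop:kernel_adjoint_dual}(a), so it is not a defect of your argument relative to the paper --- is that $N_1$, $N_2$ are a priori only endomorphisms of $(\K^m,+)$ and $(\K^n,+)$ rather than $\K$-linear maps, so ``$N_i^t$'' must be read as the adjoint with respect to a nondegenerate biadditive form (e.g.\ the trace form over the prime field); this is harmless when $\K$ is finite, which is the setting in which the corollary is subsequently applied.
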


\begin{corollary}\label{coro:strong_equivalence_kernel}
	Let $\C_1$ and $\C_2$ be in $\F_q^{m\times n}$. Assume that both $\C_1$ and $\C_2$ contain the zero matrix and \eqref{eq:covering_all_vectors} holds for $\C_1$ and $\C_2$. Suppose that $\C_1$ is strongly equivalent to $\C_2$. Then their kernels are isomorphic.
\end{corollary}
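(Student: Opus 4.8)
The plan is to unwind the definition of strong equivalence, split into the two cases it allows, and in the harder (transpose) case upgrade the set-theoretic bijection provided by Corollary \ref{coro:C1_and_hatC_1} to a genuine field isomorphism via the classification of finite fields.

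First I would record that, since $\C_1$ and $\C_2$ lie in $\F_q^{m\times n}$ and the statement speaks of strong equivalence, we are in the situation $m=n$ where strong equivalence is defined; thus $\C_2$ is equivalent either to $\C_1$ or to $\C_1^{\top}$. In the first case the conclusion is immediate: Lemma \ref{lm:equivalence_Codes_kernel} says equivalent codes have isomorphic kernels, so $K_{\C_1}\cong K_{\C_2}$. Hence all of the real content sits in the second case, where $\C_2$ is equivalent to $\C_1^{\top}$.

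For that case I would first note that the hypotheses of Theorem \ref{th:kernel_from_covering} are satisfied by both $\C_1$ and $\C_2$: each contains the zero matrix, each obeys the covering condition \eqref{eq:covering_all_vectors}, and the base field $\F_q$ is finite. Therefore $K_{\C_1}$ and $K_{\C_2}$ are both finite fields. Next, Lemma \ref{lm:equivalence_Codes_kernel} applied to the equivalence between $\C_2$ and $\C_1^{\top}$ gives $K_{\C_2}\cong K_{\C_1^{\top}}$; in particular $K_{\C_1^{\top}}$ is a finite field of order $|K_{\C_2}|$. Finally, Corollary \ref{coro:C1_and_hatC_1}—applicable because $\C_1$ contains the zero matrix and satisfies \eqref{eq:covering_all_vectors}—yields a bijection between $K_{\C_1}$ and $K_{\C_1^{\top}}$, so that $|K_{\C_1}|=|K_{\C_1^{\top}}|=|K_{\C_2}|$. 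Since two finite fields of equal cardinality are isomorphic, I conclude $K_{\C_1}\cong K_{\C_1^{\top}}\cong K_{\C_2}$, which completes the proof.

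The step I expect to be the crux is exactly this passage from a bijection to an isomorphism. Corollary \ref{coro:C1_and_hatC_1} only delivers the explicit (transpose–inverse) bijection of Proposition \ref{prop:kernel_adjoint_dual}(a) between $K_{\C_1}$ and $K_{\C_1^{\top}}$, which a priori carries no ring-theoretic information. What makes everything collapse is that all three kernels are already known to be \emph{finite fields}: $K_{\C_1}$ and $K_{\C_2}$ directly from Theorem \ref{th:kernel_from_covering}, and $K_{\C_1^{\top}}$ by transport of structure along $K_{\C_2}\cong K_{\C_1^{\top}}$. Once the field property is in hand, equality of orders is all that is required, and uniqueness of the finite field of a given order upgrades the bijection to an isomorphism for free. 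The only bookkeeping I would be careful about is confirming that strong equivalence genuinely forces $m=n$, so that $\C_1^{\top}$ again lies in $\F_q^{m\times n}$ and the covering hypotheses are correctly propagated through $\C_2$.
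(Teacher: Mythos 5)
Your proof is correct and follows essentially the same route as the paper: Lemma \ref{lm:equivalence_Codes_kernel} disposes of the plain-equivalence case, and in the transpose case Lemma \ref{lm:equivalence_Codes_kernel} combined with Corollary \ref{coro:C1_and_hatC_1} yields equality of the kernels' sizes. Your only addition is to spell out the final upgrade---Theorem \ref{th:kernel_from_covering} makes all the kernels finite fields, so equal cardinality implies isomorphism---a step the paper leaves implicit, since its proof literally ends by asserting the kernels ``are of the same size.''
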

\begin{proof}
	If $\C_1$ is equivalent to $\C_2$, then the result follows directly from Lemma \ref{lm:equivalence_Codes_kernel}; if $\C_1$ is equivalent to $\C^\top_2$, then its kernel $K_{\C_1}$ is isomorphic to the kernel $K_{{\C}^{\top}_2}$ of $\scrT({\C}^{\top}_2)$. Together with Corollary \ref{coro:C1_and_hatC_1}, we see that the kernels of $\scrT(\C_1)$ and $\scrT(\C_2)$ are of the same size.
\end{proof}

\section{Nuclei of a rank metric code}\label{se:nuclei}

Let $\C \subseteq \K^{m\times n}$ be a $\K$-linear rank metric code. We define the {\it middle nucleus} of $\C$ as the following set of matrices of order $m$: 
$$N_m(\C) = \{Z \in \K^{m\times m} \, \colon \, ZC \in \C \text{ for all } C \in \C \}.$$
In the same way we say that the {\it right nucleus} of $\C$ is the following set: $$N_r(\C)= \{Y \in \K^{n\times n} \, : \, CY \in \C  \text{ for all }C \in \C \}.$$
In particular, when $\C$ defines a finite semifield $\mathbb{S}$, $N_m(\C)$ (resp.\ $N_r(\C)$) is exactly the middle (resp.\ right) nucleus of $\mathbb{S}$. 
In \cite{liebhold_automorphism_2016}, the middle nucleus (resp.\ right nucleus) is called \emph{left} (resp.\ \emph{right}) \emph{idealiser} of $\C$.

It is straightforward to note that inveritble elements in these sets define two subgroups of the automorphism group of the translation structure $\scrT(\C)$ fixing $S(O)$ and $S(\infty)$, respectively.

The middle and right nuclei of semifields are invariants under isotopism, which is the most widely investigated equivalence on semifields. They also play very important roles in distinguishing and the classification of semifields. Hence, it is natural to consider their properties for general rank metric codes. 
\begin{proposition}\label{prop:nuclei_invariants}
	For two equivalent linear rank metric codes $\C_1$ and $\C_2$ in $\K^{m\times n}$, their right (resp.\ middle) nuclei are also equivalent.
\end{proposition}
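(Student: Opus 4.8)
The plan is a direct computation: I would show that the very maps $A$, $B$, $\gamma$ realizing the equivalence $\C_1\to\C_2$ conjugate $N_m(\C_1)$ onto $N_m(\C_2)$ and $N_r(\C_1)$ onto $N_r(\C_2)$, so that the induced correspondence is itself a code equivalence. First, since $\C_1$ and $\C_2$ are linear, I may assume (as observed just after \eqref{eq:equivalence_def}) that $C=O$; thus there exist $A\in\GL(m,\K)$, $B\in\GL(n,\K)$ and $\gamma\in\Aut(\K)$ with $\C_2=\{AX^{\gamma}B : X\in\C_1\}$.

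For the middle nucleus, I would start from the defining condition $W\C_2\subseteq\C_2$ for $W\in\K^{m\times m}$ and substitute $\C_2=A\C_1^{\gamma}B$, writing $\C_1^{\gamma}:=\{X^{\gamma}:X\in\C_1\}$. Cancelling the invertible factor $A$ on the left and $B$ on the right reduces this to $(A^{-1}WA)\,\C_1^{\gamma}\subseteq\C_1^{\gamma}$. Applying $\gamma^{-1}$ entrywise — which is multiplicative on matrix products, so that $(PQ)^{\gamma^{-1}}=P^{\gamma^{-1}}Q^{\gamma^{-1}}$ — turns this into $(A^{-1}WA)^{\gamma^{-1}}\,\C_1\subseteq\C_1$. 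Hence $W\in N_m(\C_2)$ if and only if $Z:=(A^{-1}WA)^{\gamma^{-1}}\in N_m(\C_1)$, and solving for $W$ gives $W=AZ^{\gamma}A^{-1}$. Therefore $N_m(\C_2)=\{AZ^{\gamma}A^{-1}:Z\in N_m(\C_1)\}$, and since $A,A^{-1}\in\GL(m,\K)$ this is exactly an equivalence of codes in $\K^{m\times m}$ (with the matrix $B$ of \eqref{eq:equivalence_def} taken to be $A^{-1}$).

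For the right nucleus the argument is symmetric, with left and right multiplication interchanged. Starting from $\C_2 Y\subseteq\C_2$ for $Y\in\K^{n\times n}$, cancelling $A$ on the left and $B^{-1}$ on the right reduces the condition to $\C_1^{\gamma}\,(BYB^{-1})\subseteq\C_1^{\gamma}$, and applying $\gamma^{-1}$ yields $\C_1\,(BYB^{-1})^{\gamma^{-1}}\subseteq\C_1$. Thus $Y\in N_r(\C_2)$ iff $V:=(BYB^{-1})^{\gamma^{-1}}\in N_r(\C_1)$, i.e.\ $Y=B^{-1}V^{\gamma}B$, so $N_r(\C_2)=\{B^{-1}V^{\gamma}B:V\in N_r(\C_1)\}$, an equivalence in $\K^{n\times n}$ via $B^{-1},B\in\GL(n,\K)$.

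There is no deep obstacle here; the proof is pure bookkeeping, and the only points needing care are tracking which of $A$, $B$ is cancelled on which side, using $\gamma$ and $\gamma^{-1}$ consistently, and checking that the conjugating matrices are genuinely invertible and of the correct size so that the resulting bijection is an equivalence and not merely a set bijection. I would also remark, although it is not needed for the present statement, that $N_m(\C^{\top})=N_r(\C)^{\top}$; this is why each nucleus is preserved individually only under equivalence, whereas the transpose entering strong equivalence would interchange the two.
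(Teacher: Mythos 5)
Your proof is correct and is essentially the paper's own argument: the paper likewise reduces to $\C_2=\{AM^{\gamma}B : M\in\C_1\}$ and observes that $Z\in N_m(\C_1)$ if and only if $AZ^{\gamma}A^{-1}\in N_m(\C_2)$ (and symmetrically $V\in N_r(\C_1)$ if and only if $B^{-1}V^{\gamma}B\in N_r(\C_2)$), which is exactly the twisted-conjugation correspondence you compute explicitly. Your write-up merely spells out the cancellation and $\gamma^{-1}$ bookkeeping that the paper leaves implicit, and your closing remark about the transpose swapping the two nuclei matches the paper's Proposition~\ref{prop:Delsartedual}(a).
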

\begin{proof}
	Suppose that ${\C}_1$ and ${\C}_2$ are equivalent. By definition this means that there exists $\gamma\in \Aut(\K)$, $A\in\GL(m,\K)$ and $B\in\GL(n,\K)$ such that $$\C_2 = \{AM^{\gamma}B \, \colon \, M \in \C_1\}.$$

	An element $Z \in \K^{m\times m}$, belongs to the middle nucleus $N_m(\C_1)$ if and only if $AZ^{\gamma}A^{-1}$ belongs to $N_m(\C_2)$; this means that $N_m(\C_1)$ and $N_m(\C_2)$ are also equivalent. A similar argument can be used to prove that also $N_r(\C_1)$ is equivalent to $N_r(\C_2)$. This concludes the proof.
\end{proof}

Of course, we can also define middle and right nuclei for nonlinear codes. However, through the proof of Proposition \ref{prop:nuclei_invariants}, we see that the nuclei of nonlinear codes are not necessarily invariants under the isometry. If we just restrict the equivalence to the ``restricted equivalence'' $ \sim'$ in the sense that $\cC_1 \sim'\cC_2$ whenever there are $A\in \GL(m,\K)$ and $B\in\GL(n,\K)$ such that $\C_2 = \{AM^{\gamma}B : M \in \C_1\}$, then $\cC_1\sim'\cC_2$ implies that $N_r(\cC_1)$ and $N_r(\cC_2)$ are isomorphic and $N_m(\cC_1)$ and $N_m(\cC_2)$ are also isomorphic.

In the rest of this paper, we restrict ourselves to the investigation of the nuclei of linear rank metric codes.

When $\C$ is $\K$-linear, it is routine to verify that $N_m(\C)$ and $N_r(\C)$ are subrings of $\K^{m \times m}$ and $\K^{n\times n},$ respectively. Moreover, they both contain the zero map and $\K$ as a subfield. Hence, the code $\C$ can be seen as a left module (resp.\ a right module) over $N_m(\C)$ (resp.\ $N_r(\C)$).

Regarding the adjoint and Delsarte dual operation we have the following results.

\begin{proposition}\label{prop:Delsartedual}
	Let $\C$ be a linear rank metric code in $\K^{m\times n}$. Let $\C^{\top}$ (resp. $\C^{\perp}$) be the adjoint (resp.\ Delsarte dual) code of $\C$. Then the following statements hold: 
	\begin{enumerate}[label=(\alph*)]
		\item	$N_m(\C^{\top}) = N_r(\C)^\top$ and $N_r(\C^{\top})=N_m(\C)^\top$;
		\item $N_m(\C^{\perp}) = N_m(\C)^\top$ and $N_r(\C^{\perp})= N_r(\C)^\top$.
	\end{enumerate}
\end{proposition}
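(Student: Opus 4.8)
The plan is to prove both parts by unwinding the defining membership conditions for the nuclei and translating them through the transpose and trace operations, exploiting the elementary identities $(XY)^t = Y^t X^t$ and $\Tr(XY) = \Tr(YX)$ together with the nondegeneracy of the bilinear form $\langle M, N \rangle = \Tr(MN^t)$.

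\medskip

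\noindent\textbf{Part (a).} For the adjoint claim, I would start from the definition: $Z \in N_m(\C)$ means $ZC \in \C$ for every $C \in \C$. Applying the transpose gives $(ZC)^t = C^t Z^t \in \C^\top$. Since every element of $\C^\top$ has the form $C^t$ with $C \in \C$, this says precisely that right-multiplication by $Z^t$ maps $\C^\top$ into itself, i.e.\ $Z^t \in N_r(\C^\top)$. Running the implication in both directions (using that $\top$ is an involution and that $\C \mapsto \C^\top$ is a bijection) yields $N_r(\C^\top) = N_m(\C)^\top$. The symmetric computation, starting from $Y \in N_r(\C)$ with $CY \in \C$ and transposing to $Y^t C^t \in \C^\top$, gives $N_m(\C^\top) = N_r(\C)^\top$. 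Both identities are then immediate consequences once one checks the set equalities are genuine equalities and not just inclusions, which follows from applying the argument to $\C^\top$ in place of $\C$ and using $(\C^\top)^\top = \C$.

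\medskip

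\noindent\textbf{Part (b).} For the Delsarte dual, I would argue that $Z \in N_m(\C)$ if and only if $Z^t \in N_m(\C^\perp)$. Take $Z \in N_m(\C)$, so $ZC \in \C$ for all $C \in \C$; I want to show $Z^t N \in \C^\perp$ for every $N \in \C^\perp$, which by definition of $\C^\perp$ means $\langle Z^t N, C \rangle = 0$ for all $C \in \C$. Computing, $\langle Z^t N, C \rangle = \Tr(Z^t N C^t) = \Tr(N C^t Z^t) = \Tr\big(N (ZC)^t\big) = \langle N, ZC \rangle$. Since $ZC \in \C$ and $N \in \C^\perp$, this trace vanishes, so indeed $Z^t N \in \C^\perp$, giving $Z^t \in N_m(\C^\perp)$. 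The converse direction uses that $(\C^\perp)^\perp = \C$ for a linear code together with the involutory nature of $\top$, so the map $Z \mapsto Z^t$ is a bijection between $N_m(\C)$ and $N_m(\C^\perp)$, establishing $N_m(\C^\perp) = N_m(\C)^\top$. The right-nucleus statement $N_r(\C^\perp) = N_r(\C)^\top$ follows by the parallel computation: for $Y \in N_r(\C)$ and $N \in \C^\perp$, one checks $\langle N Y^t, C \rangle = \Tr(N Y^t C^t) = \Tr\big(N (CY)^t\big) = \langle N, CY \rangle = 0$, so $N Y^t \in \C^\perp$ and $Y^t \in N_r(\C^\perp)$.

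\medskip

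\noindent I expect no serious obstacle here; the whole proof is a sequence of trace manipulations. The one point requiring genuine care is justifying that each inclusion is actually an equality rather than merely a containment. For this I would rely on the fact that the relevant operations ($M \mapsto M^t$ and $\C \mapsto \C^\perp$) are involutions on the space of linear codes, so applying the established one-sided inclusion to the transformed code and transforming back forces the reverse inclusion. A secondary subtlety worth flagging explicitly is that the argument in part (b) uses the nondegeneracy of $\langle \cdot, \cdot \rangle$ only implicitly, through the defining equality $\C^\perp = \{ M : \langle M, N\rangle = 0 \text{ for all } N \in \C\}$ and the duality $(\C^\perp)^\perp = \C$; I would state this reliance once at the start rather than repeating it.
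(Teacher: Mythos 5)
Your proof is correct and follows essentially the same route as the paper: part (a) by unwinding the definitions, and the first identity of (b) via the same cyclic-trace computation $\Tr(N(ZC)^t)=\langle N, ZC\rangle=0$, with the inclusions upgraded to equalities by the involutory nature of $\top$ and $\perp$, exactly as the paper does. The only cosmetic difference is that for $N_r(\C^{\perp})=N_r(\C)^\top$ you redo the trace computation directly, whereas the paper deduces it formally by combining part (a) with the middle-nucleus identity and the commutation $\C^{\perp\top}=\C^{\top\perp}$; both are equally valid.
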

\begin{proof}
	By definition, (a) can be readily verified.

	For (b), we first observe that if $Z \in N_m(\C)$ then $Z^t$ belongs to $N_m(\C^{\perp})$; indeed, let $N\in \C^{\perp}$, i.e., $\Tr(CN^t)=0$ for all $C \in \C$. We have $$\Tr(C(Z^tN)^t)=\Tr(C(N^tZ))=\Tr((CN^t)Z)=\Tr(Z(CN^t))=\Tr((ZC)N^t)=0$$ for each $C \in \C$. Since the Delsarte dual operation is involutory, we have that $N_m(\C)^\top = N_m(\C^{\perp})$.
	
	It is not difficult to see that the adjoint operation and the Delsarte duality commute, i.e., $\C^{\perp \top}=\C^{\top \perp}$.  With this in mind we have the following 
	$$N_r(\C^{\perp})^\top=N_m(\C^{\perp \top})=N_m(\C^{\top \perp})=N_m(\C^{\top})^\top= N_r(\C).$$
	This concludes the proof.
\end{proof}

As in the previous section on kernels, we are curious about the conditions under which middle or right nucleus of a code is a field.

\begin{lemma}\label{lm:middle_nuclei_field}
	Let $\C$ be a linear rank metric code of $\K^{m \times n}$ with $m\leqslant n$ and its minimum distance  $d \geqslant \left\lfloor \frac{m}{2} \right\rfloor+1$. Assume that there is at least one full rank matrix in $\C$. For any element $Z \in N_m(\C)$, assume that there exists a nonzero $C_0 \in \C$ such that $ZC_0=O$. Then $Z$ is the zero matrix $O$. In particular, when $\C$ is a finite set, all nonzero matrices in $N_m(\C)$ are invertible and $N_m(\C)$ is a field.
\end{lemma}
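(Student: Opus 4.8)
The plan is to prove the displayed claim first by a rank-counting argument that plays two codewords against each other, and then to extract the ``in particular'' statement from it using a finiteness argument together with Wedderburn's little theorem.

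For the main claim, I would argue by contradiction: suppose $Z \neq O$ and set $r = \rk(Z) \geqslant 1$. The idea is to squeeze $r$ from two sides. For a lower bound, let $A \in \C$ be a full rank matrix, which exists by hypothesis; since $m \leqslant n$ the columns of $A$ span $\K^m$, so the column space of $ZA$ is exactly $\mathrm{im}(Z)$ and hence $\rk(ZA) = r \geqslant 1$. Because $Z \in N_m(\C)$ we have $ZA \in \C$, and since $ZA \neq O$ is a nonzero codeword its rank is at least $d$; thus $r \geqslant d$. For an upper bound, I would use the annihilating codeword: $ZC_0 = O$ forces every column of $C_0$ to lie in $\ker(Z)$, so $\rk(C_0) \leqslant m - r$, while $C_0 \neq O$ gives $\rk(C_0) \geqslant d$, whence $r \leqslant m - d$. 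Combining the two bounds yields $2d \leqslant m$, which contradicts $d \geqslant \lfloor m/2 \rfloor + 1$. Therefore $Z = O$.

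It is worth pinning down where each hypothesis is used, since this is really the heart of the matter. The existence of a full rank matrix is precisely what guarantees a codeword $A$ with $\rk(ZA) = \rk(Z)$ rather than something strictly smaller (a general codeword would only give $\rk(ZA) \leqslant r$, which is useless for a lower bound), and the distance condition $d \geqslant \lfloor m/2 \rfloor + 1$ is exactly what makes $r \geqslant d$ and $r \leqslant m - d$ incompatible. There is no delicate analytic step here; the whole argument rests on the two elementary estimates $\rk(ZA) = \mathrm{im}(Z)$-dimension and $\rk(C_0) \leqslant \dim \ker(Z)$, so I expect the only real work to be stating these cleanly and checking the arithmetic with the floor function in both the even and odd cases.

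For the ``in particular'' part, let $\C$ be finite and take $Z \in N_m(\C) \setminus \{O\}$. Consider the $\K$-linear self-map $\phi \colon \C \to \C$, $C \mapsto ZC$, which is well defined since $Z \in N_m(\C)$. If $\phi$ had a nontrivial kernel, there would be a nonzero $C_0 \in \C$ with $ZC_0 = O$, forcing $Z = O$ by the main claim, a contradiction; hence $\phi$ is injective, and being an injective self-map of the finite set $\C$ it is bijective. Applying surjectivity to the full rank matrix $A$ gives $C \in \C$ with $ZC = A$, so $m = \rk(A) \leqslant \rk(Z)$ and $Z$ is invertible as a matrix. The one point needing care is closure of $N_m(\C)$ under inverses: matrix-invertibility alone is not enough, but bijectivity of $\phi$ supplies it directly, since $\phi^{-1}(C) = Z^{-1}C \in \C$ for every $C \in \C$, so $Z^{-1} \in N_m(\C)$. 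Thus every nonzero element of $N_m(\C)$ is a unit. As $N_m(\C)$ is a finite associative ring with identity $I_m$ (noted before the lemma), it is a finite division ring, and Wedderburn's little theorem upgrades it to a field, completing the proof.
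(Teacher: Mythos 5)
Your proof is correct and follows essentially the same route as the paper's: the same rank sandwich $d \leqslant \rk(Z) \leqslant m-d$ contradicting $d \geqslant \lfloor m/2 \rfloor + 1$, followed by the same finiteness/pigeonhole argument to get invertibility of nonzero elements of $N_m(\C)$. Your write-up is in fact slightly more explicit than the paper's at the end, where you verify closure of $N_m(\C)$ under inverses and invoke Wedderburn's little theorem, steps the paper leaves implicit in its final sentence.
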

\begin{proof}
	By $ZC_0=O$, the matrix $Z \in \K^{m \times m}$ can not have full rank. That means $d'<m$, where $d':= \rk(Z)$. 
	
	By way of contradiction, we assume that $Z\neq O$. As a full rank matrix $M$ is assumed to be in $\C$,  we have $ZM\neq O$. Since $ZM\in \C$,  $\rk(ZM)\geqslant d$ and $d'\geqslant d$.
	
	Again from $ZC_0=O$ we also have that $\rk(C_0)\leqslant m-d'$. Together with $d'\geqslant d$ we have $$d\leqslant \rk(C_0)\leqslant m-d'\leqslant m-d.$$ This contradicts the assumption that $d \geqslant \left\lfloor \frac{m}{2} \right\rfloor+1$.
	
	Now we suppose that $\C$ is finite. If $Z$ is degenerate, then $ZM'$ is not full rank for every $M'\in \C$, which implies that $Z\C \subsetneq \C$. Since $\C$ is finite and linear, there exists a nonzero matrix $C_0$ such that $ZC_0=O$. From the previous part, we know that $Z$ must be zero. Hence the nonzero matrices in $N_m(\C)$ are all nondegenerate. As $N_m(\C)$ is finite, closed under addition and multiplication and it contains the identity matrix, $N_m(\C)$ is a field.
\end{proof}

By transposition we get:
\begin{lemma}\label{lm:right_nuclei_field}
	Let $\C$ be a linear rank metric code of $\K^{m \times n}$ with $\left\lfloor\frac{n}{2} \right\rfloor+1\leqslant m\leqslant n$ and its minimum distance  $d \geqslant \left\lfloor \frac{n}{2} \right\rfloor+1$. Assume that there is at least one full rank matrix in $\C$. For any element $Z \in N_r(\C)$, assume that there exists $C_0 \in \C\setminus \{{O}\}$ such that $C_0Z={O}$. Then $Z$ is the zero matrix. In particular, when $\C$ is a finite set, all nonzero matrices in $N_r(\C)$ are invertible and $N_r(\C)$ is a field.
\end{lemma}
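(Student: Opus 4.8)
The plan is to deduce this statement from Lemma \ref{lm:middle_nuclei_field} by passing to the adjoint code, exactly as the phrase ``by transposition'' suggests. By Proposition \ref{prop:Delsartedual}(a) we have $N_m(\C^\top)=N_r(\C)^\top$, and since $\rk(X^t)=\rk(X)$ for every matrix $X$, the adjoint code $\C^\top\subseteq\K^{n\times m}$ has the same minimum distance $d$ and still contains a full rank matrix. Moreover $Z\in N_r(\C)$ satisfies $C_0Z=O$ if and only if $Z^t\in N_m(\C^\top)$ satisfies $Z^tC_0^t=O$, with $C_0^t\in\C^\top\setminus\{O\}$. Thus a statement about right annihilation in $\C$ becomes one about left annihilation in $\C^\top$, which is the setting of Lemma \ref{lm:middle_nuclei_field}.

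One cannot, however, quote Lemma \ref{lm:middle_nuclei_field} verbatim: transposing turns the hypothesis $m\leqslant n$ into $n\geqslant m$, so $\C^\top$ has \emph{more} rows than columns, the opposite orientation to the one assumed there. I would therefore re-run the argument of that lemma directly for $Z\in N_r(\C)$. Put $d'=\rk(Z)$ and suppose $Z\neq O$, aiming at a contradiction. From $C_0Z=O$ the row space of $C_0$ lies in the left kernel $\{\bv\in\K^n:\bv Z=\mathbf 0\}$, which has dimension $n-d'$; hence $\rk(C_0)\leqslant n-d'$. Since $C_0\in\C\setminus\{O\}$ we have $\rk(C_0)\geqslant d$, so $d'\leqslant n-d$.

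The reverse inequality $d'\geqslant d$ is where the real work lies, and I expect it to be the main obstacle. In Lemma \ref{lm:middle_nuclei_field} the corresponding step is free, because a full rank $M$ with $m\leqslant n$ has column space all of $\K^m$, forcing $ZM\neq O$; after transposition this is no longer automatic, since a full rank $M$ has row space only $m$-dimensional inside $\K^n$ and $MZ$ can a priori vanish. Here I would invoke Sylvester's rank inequality $\rk(MZ)\geqslant\rk(M)+\rk(Z)-n=m+d'-n$, which is positive as soon as $d'>n-m$; the hypothesis $m\geqslant\left\lfloor\frac{n}{2}\right\rfloor+1$ serves precisely to keep $n-m$ small. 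When $MZ\neq O$ it lies in $\C\setminus\{O\}$, so $d\leqslant\rk(MZ)\leqslant d'$, and together with $d'\leqslant n-d$ this gives $2d\leqslant n$, contradicting $d\geqslant\left\lfloor\frac{n}{2}\right\rfloor+1$. The delicate regime to dispatch is $d'\leqslant n-m$, in which $MZ$ may be zero and Sylvester is silent; this is exactly where the two hypotheses $m\geqslant\left\lfloor\frac{n}{2}\right\rfloor+1$ and $d\geqslant\left\lfloor\frac{n}{2}\right\rfloor+1$ must be used in full against the bound $\rk(C_0)\leqslant n-d'$, and I anticipate this case to demand the most care (it is trivial when $m=n$, since then $d'\leqslant n-m=0$ already forces $Z=O$).

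Finally, for the ``in particular'' clause with $\C$ finite, I would argue as in Lemma \ref{lm:middle_nuclei_field}: if some $Z\in N_r(\C)$ is degenerate then $CZ$ is never of full rank, so $\C Z\subsetneq\C$; by linearity and finiteness the map $C\mapsto CZ$ on $\C$ then has nontrivial kernel, producing a nonzero $C_0$ with $C_0Z=O$, whence $Z=O$ by the first part. Thus every nonzero element of $N_r(\C)$ is invertible; being a finite set closed under addition and multiplication and containing the identity matrix, $N_r(\C)$ is a field.
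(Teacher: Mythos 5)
You have correctly located the crux, but the case you postpone, $1\leqslant \rk(Z)\leqslant n-m$, is not merely delicate: it cannot be closed, because the statement as printed is false whenever $m<n$. Take $\cK\subseteq\F_q^{m\times m}$ a field of matrices isomorphic to $\F_{q^m}$ (so every nonzero element is invertible) and let
\[
\C=\bigl\{\,(A\mid O_{m,n-m})\,:\,A\in\cK\,\bigr\}\subseteq\F_q^{m\times n},
\]
i.e.\ each matrix of $\cK$ padded with $n-m$ zero columns. This code is linear, contains matrices of full rank $m$, and has minimum distance $d=m\geqslant\left\lfloor \frac{n}{2}\right\rfloor+1$, so every hypothesis of the lemma is met. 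Now let $Z\in\F_q^{n\times n}$ be any nonzero matrix whose first $m$ rows are zero (such $Z$ exist precisely because $n>m$). Then $CZ=O\in\C$ for every $C\in\C$, so $Z\in N_r(\C)$ and $C_0Z=O$ for every nonzero $C_0\in\C$, yet $Z\neq O$; moreover $N_r(\C)$ is visibly not a field. Note that $\rk(Z)\leqslant n-m$, exactly the window your Sylvester inequality leaves open: your computation showing that any counterexample must live there is correct, and this one does.

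What your attempt really exposes is a defect in the paper itself. The paper's entire proof is the phrase ``by transposition'', but transposing Lemma \ref{lm:middle_nuclei_field} yields a statement about codes with at least as many rows as columns: in the notation of Lemma \ref{lm:right_nuclei_field} it proves the case $m\geqslant n$, hence, under the lemma's own constraint $m\leqslant n$, only the case $m=n$ --- which is also the case where your argument is already complete, since $\rk(Z)\leqslant n-m=0$ forces $Z=O$ outright. For $m<n$ no proof exists. A correct repair is to replace the hypothesis that $\C$ contain a full rank matrix by the covering property \eqref{eq:covering_all_vectors}: if $\rk(Z)<d$ then every $CZ$ lies in $\C$ with rank at most $\rk(Z)<d$, so $CZ=O$ for all $C\in\C$, and hence for any nonzero $\bx$ the set $\{\bx C : C\in\C\}$ lies in the left kernel of $Z$, a proper subspace of $\F_q^n$ when $Z\neq O$, contradicting \eqref{eq:covering_all_vectors}; while $\rk(Z)\geqslant d$ contradicts $d\leqslant\rk(C_0)\leqslant n-\rk(Z)\leqslant n-d$ exactly as you showed. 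This repaired form is what Theorem \ref{th:nuclei_of_MRD}(b) actually requires: there the lemma is applied to MRD codes, and Theorem \ref{th:MRD_covering} supplies the covering property. Your final paragraph suffers from the same defect: for $m<n$ a degenerate $Z$ need not prevent $CZ$ from having full rank $m$ (in the example above, the $n\times n$ matrix with $I_m$ in the upper-left block and zeros elsewhere lies in $N_r(\C)$ and fixes every $C$), so the map $C\mapsto CZ$ can be injective with $Z$ singular; again it is covering, not the presence of one full rank matrix, that makes the finiteness argument run.
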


\section{Kernels and nuclei of MRD codes}\label{se:MRD}
In this section, we investigate the kernel and nuclei of an MRD code over a finite field. 

\subsection{Kernels of MRD codes}First let us consider the kernel of an MRD code.

\begin{theorem}\label{th:MRD_covering}
	Let $\C$ be an MRD code in $\F_q^{m\times n}$. Then $\scrT(\C)$ satisfies the covering property, i.e., for any nonzero vector $\bx\in\F_q^m$ and any $\by \in \F_q^n$, there is at least one matrix $M\in \C$ such that $\bx M = \by$.
\end{theorem}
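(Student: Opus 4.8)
The plan is to fix a nonzero $\bx\in\F_q^m$ and prove that the map $\varphi_{\bx}\colon M\mapsto \bx M$ sends $\C$ onto all of $\F_q^n$; this is exactly \eqref{eq:covering_all_vectors}. First I would normalise $\bx$. For any nonzero $\bx$ there is $A\in\GL(m,\F_q)$ whose first row is $\bx$, so $\bx M=e_1(AM)$ and therefore $\{\bx M:M\in\C\}$ is the set of first rows of the code $A\C$. Since $A\C$ is equivalent to $\C$ in the sense of \eqref{eq:equivalence_def} (take $B=I$, $C=O$, $\gamma=\id$), it is again MRD with the same minimum distance $d$. Hence it suffices to prove that the first rows of an arbitrary MRD code cover $\F_q^n$, i.e.\ to treat the case $\bx=e_1$.

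The main step bounds the fibers. For $\by\in\F_q^n$ set $F_{\by}=\{M\in\C:e_1M=\by\}$, the set of codewords whose first row is $\by$. If $M,M'\in F_{\by}$, then $M-M'$ has zero first row, so $\rk(M-M')$ equals the rank of its bottom $(m-1)\times n$ block, while $\rk(M-M')\geqslant d$ whenever $M\neq M'$. Thus, deleting the zero first row and subtracting a fixed element of $F_{\by}$ identifies $F_{\by}$ with a rank metric code in $\F_q^{(m-1)\times n}$ of minimum distance at least $d$. Assuming $n\geqslant m$ (the normalisation used for the weight distribution), the Singleton bound gives $\#F_{\by}\leqslant q^{n((m-1)-d+1)}=q^{n(m-d)}$. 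Since $\sum_{\by}\#F_{\by}=\#\C=q^{n(m-d+1)}$, the number of nonempty fibers is at least $q^{n(m-d+1)}/q^{n(m-d)}=q^{n}$; as there are only $q^{n}$ possible first rows, every $\by\in\F_q^n$ is attained.

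The delicate point is the fiber estimate, and this is where $n\geqslant m$ is essential: for $m>n$ the Singleton bound on a fiber is $q^{(m-1)(n-d+1)}$, which exceeds the required $\#\C/q^{n}$ as soon as $d\geqslant2$, so the elementary count no longer forces surjectivity. I expect this tall case to be the main obstacle. For a $\K$-linear code it can be bypassed uniformly in $m,n$: $\varphi_{\bx}$ is linear, so if its image is not all of $\F_q^n$ there is a nonzero $\by_0$ with $(\bx M)\by_0^t=\langle M,\bx^t\by_0\rangle=0$ for all $M\in\C$; then the nonzero rank-one matrix $\bx^t\by_0$ lies in $\C^{\perp}$, contradicting that $\C^{\perp}$ is MRD with $d(\C^{\perp})=\min\{m,n\}-d+2\geqslant2$ by \eqref{eq:delsarte_dual_distance}. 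For a general (possibly nonlinear) MRD code in the tall range one instead invokes Delsarte's result that the dual inner distribution of an MRD code vanishes in degrees $1,\dots,d-1$: fixing a nontrivial additive character $\chi$ of $\F_q$, this makes $\sum_{M\in\C}\chi(\langle M,N\rangle)=0$ for every rank-one $N$, whence the standard character-sum expansion of $\#\{M\in\C:\bx M=\by\}$ collapses to $\#\C/q^{n}>0$, giving the covering property for all $m,n$.
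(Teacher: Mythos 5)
Your proof is correct, and in the wide case $n\geqslant m$ it is essentially the paper's argument in different packaging: the paper assumes some fiber is empty, deduces that another fiber $U_{\bar{\bz}}$ has more than $q^{n(m-d)}$ elements, and pigeonholes on $m-d$ rows to produce two codewords at distance at most $d-1$; you instead bound \emph{every} fiber by $q^{n(m-d)}$ via the Singleton bound applied to the punctured fiber code in $\F_q^{(m-1)\times n}$, which forces all $q^n$ fibers to be nonempty (indeed of equal size). Since the Singleton bound is itself proved by exactly this projection-and-pigeonhole, the content is the same there. The genuine divergence is the tall case $m>n$, and your diagnosis of it as the delicate point is accurate --- arguably more so than you realize: the paper's one-sentence treatment (two matrices agreeing in the first $\lfloor \frac{m}{n}(n-d)+1\rfloor$ rows) does not by itself contradict the minimum distance, because agreement on $r$ rows only bounds the rank of the difference by $\min\{m-r,n\}$, and with that value of $r$ one has $m-r\geqslant d$ whenever $(m-n)d\geqslant n$ (e.g.\ $m=3$, $n=2$, $d=2$). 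Your two substitutes are sound and close exactly this hole: for linear codes, a proper image of $\varphi_{\bx}$ forces the rank-one matrix $\bx^t\by_0$ into $\C^{\perp}$, contradicting $d(\C^{\perp})=\min\{m,n\}-d+2\geqslant 2$, an argument that works uniformly in $m$ and $n$; for arbitrary codes, your character-sum computation is precisely the ``design of index $1$'' derivation that the paper itself mentions, right after the theorem, as an alternative proof via Delsarte.

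One correction to that last step: Delsarte's theorem gives vanishing of the dual inner distribution of an MRD code in degrees $1,\dots,\min\{m,n\}-d+1$ (its design strength), not $1,\dots,d-1$. The range you state is false in general: for a linear MRD code with $2d\geqslant \min\{m,n\}+3$ (e.g.\ a semifield spread set, $m=n=d\geqslant 3$), the Delsarte dual contains words of rank $\min\{m,n\}-d+2\leqslant d-1$, so the corresponding coefficient is nonzero. This does not damage your proof, since you only use vanishing in degree $1$, and $\min\{m,n\}-d+1\geqslant 1$ holds for every MRD code; but the statement of the cited result should be fixed.
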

\begin{proof}
	Without loss of generality, we assume that $\bx=(1,0, \dots, 0)$; otherwise we choose an invertible matrix $L$ such that $\bx L=(1,0\dots, 0)$ and left multiply its inverse matrix $L^{-1}$ by $M\in \C$ to get another MRD code.
	
	Assume, by way of contradiction, that there is an element $\by\in \F_q^n$ such that $\bx M\neq \by$ for all $M\in \C$. Suppose that the minimum rank distance of $\C$ is $d$ and $m\leqslant n$. It means that there are $q^{n(m-d+1)}$ matrices in $\C$.
	
	For each ${\bf z}\in \F_q^n$, we take $U_{\bf z}:= \{ M\in \C : {\bf x} M = {\bf z} \}$. It is clear that
	\begin{equation*}
		\sum_{{\bf z}\in \F_q^n} \#U_{\bf z} = q^{n(m-d+1)},
	\end{equation*}
	and
	\[ \#U_{\bf y} = 0.\]
	From them, we can derive that
	\begin{equation}\label{eq:count_max_uz}
		\max_{{\bf z}\in \F_q^n}\{ \#U_{\bf z}\} \geqslant \frac{q^{n(m-d+1)}}{q^n-1}>q^{n(m-d)}. 
	\end{equation}
	Let $\bar{\bf z}$ be the vector such that $\#U_{\bar{\bf z}}=\max_{{\bf z}\in \F_q^n}\{ \#U_{\bf z} \}$. 
	
	Now let us look at the matrices in $U_{\bf \bar{z}}$. As $\bx = (1,0,\dots, 0)$, the first row of each $M\in U_{\bf \bar{z}}$ equals $\bf {\bar z}$. For any $m-d$ rows except for the first one,  by \eqref{eq:count_max_uz}, we see that there must exist two matrices $M$ and $M'$ in $U_{\bf \bar{z}}$ such that these $m-d$ rows are the same. It follows that the rank of $M-M'$ is at most $d-1$, which contradicts the assumption that $\C$ is an MRD code.
	
	For the $m>n$ case, we can similarly prove that there exist two matrices $M$ and $M'$ in which the first $\lfloor  \frac{m}{n}(n-d)+1 \rfloor$ rows are the same, which contradicts the minimum distance of $\C$.
\end{proof}

Theorem \ref{th:MRD_covering} can also be derived from the fact that any MRD code of $\F_{q}^{m\times n}$ with minimum distance $d$ is an \emph{$(n-d+1)$-design} of index $1$ in $\F_{q}^{m\times n}$; see \cite[Section 5]{delsarte_bilinear_1978} for more details.

\begin{corollary}
	Let $\C$ be an MRD code in $\F_q^{m\times n}$ with $O\in \C$, such that $\gcd(m,n)=1$ or the minimum distance $d<\min \{m,n\}$. Then, the kernel $K$ of $\scrT(\C)$ is $\F_q$.
\end{corollary}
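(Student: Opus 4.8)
The plan is to combine the covering property of MRD codes with the structural description of the kernel already established. First I would invoke Theorem~\ref{th:MRD_covering}, which shows that $\scrT(\C)$ satisfies the covering property \eqref{eq:covering_all_vectors}. Since $O\in\C$, Theorem~\ref{th:kernel_from_covering} then applies and yields at once that $K$ is a finite field containing $\F_q$, say $K\cong\F_{q^r}$ with $r\geqslant 1$, and that every $\mu\in K$ has the block-diagonal shape $\mathrm{diag}(N_1,N_2)$ with $N_1\in\GL(m,\F_q)$, $N_2\in\GL(n,\F_q)$ and $N_1^{-1}MN_2=M$, i.e.\ $N_1M=MN_2$, for all $M\in\C$. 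The entire problem thus reduces to proving $r=1$.

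The second step records the divisibility constraints on $r$. The assignments $\mu\mapsto N_1$ and $\mu\mapsto N_2$ are ring homomorphisms from the field $K$ into $\F_q^{m\times m}$ and $\F_q^{n\times n}$; as they send the identity of $K$ to $I_m$ and $I_n$, their kernels are proper ideals of a field and hence trivial, so both are injective. They therefore make $\F_q^m$ and $\F_q^n$ into faithful unital $K$-modules, that is, $\F_{q^r}$-vector spaces, forcing $r\mid m$ and $r\mid n$, so $r\mid\gcd(m,n)$. This already disposes of the case $\gcd(m,n)=1$, where $r=1$ and hence $K=\F_q$.

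For the case $d<\min\{m,n\}$ I would reinterpret the relation $N_1M=MN_2$ as $K$-semilinearity: writing $\bx^{\mu}=\bx N_1$ and $\by^{\mu}=\by N_2$ for the module structures above, one has $(\bx M)^{\mu}=\bx MN_2=\bx N_1M=(\bx^{\mu})M$, so each $M\in\C$, and likewise each difference $M-M'$ with $M,M'\in\C$, is a homomorphism of $K$-modules $\F_q^m\to\F_q^n$. Consequently the image of any such difference is an $\F_{q^r}$-subspace of $\F_q^n$, whence its $\F_q$-rank---the rank distance it realizes---is a multiple of $r$. In other words, every rank distance attained in $\C$ is divisible by $r$. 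The minimum distance $d$ is attained, so $r\mid d$; and since $d+1\leqslant\min\{m,n\}$, a difference of rank exactly $d+1$ also exists, so $r\mid(d+1)$. Then $r\mid\gcd(d,d+1)=1$, giving $r=1$ and $K=\F_q$.

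The main obstacle is this last case, and within it the two points that are not purely formal. The conceptual one is recognising that the kernel relation forces every codeword to be linear over the whole field $K$, so that all realized ranks are multiples of $[K:\F_q]$; the technical one is guaranteeing that the distance $d+1$ is genuinely attained. For the latter I would rely on the fact that all MRD codes with fixed parameters share the rank distribution \eqref{eq:MRD_weightdistribution}: Lemma~\ref{lm:MRD_weight_all} supplies codewords of every rank from $d$ up to $\min\{m,n\}$ when $m\leqslant n$, and the case $m>n$ follows by passing to the adjoint code $\C^{\top}$ via Corollary~\ref{coro:C1_and_hatC_1}, which does not change the attained distances. This parameter-invariance is what lets the argument work for the possibly nonlinear code $\C$ without assuming linearity.
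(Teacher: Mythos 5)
Your proposal is correct and takes essentially the same route as the paper: Theorems \ref{th:MRD_covering} and \ref{th:kernel_from_covering} give $K=\F_{q^r}$, the induced $\F_{q^r}$-structure on $\F_q^m$ and $\F_q^n$ forces $r\mid\gcd(m,n)$ and makes every rank attained by $\C$ a multiple of $r$, and Lemma \ref{lm:MRD_weight_all} supplies two consecutive attained ranks. The only cosmetic difference is that you use the consecutive ranks $d$ and $d+1$ where the paper uses $\min\{m,n\}-1$ and $\min\{m,n\}$ (and your citation of Corollary \ref{coro:C1_and_hatC_1} for the $m>n$ case is unnecessary, since transposition trivially preserves rank).
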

\begin{proof}
	By Lemma \ref{lm:K_in_Kernel}, $K$ contains a subfield isomorphic to $\F_q$. By Theorems \ref{th:kernel_from_covering} and \ref{th:MRD_covering}, we know that $K$ is a finite field. Let us say $K=\F_{q^r}$ for a positive integer $r$. By the definition of the kernel the set $S(\C)$ can be viewed as a set of $K$-subspaces in $\K^m \times \K^n$. That means each matrix $M$ in $\C$ can also be viewed as a matrix over $\F_{q^r}$. It implies that $r$ divides $m$ and $n$.
	
	When $\gcd(m,n)=1$, it is clear that $r$ must be $1$. When the minimum distance $d<\min \{m,n\}$, $r=1$ can be derived from the fact that there exist matrices of rank $\min\{m,n\}$ and $\min\{m,n\}-1$ in $\C$  by Lemma \ref{lm:MRD_weight_all}.
\end{proof}

It is worth pointing out that when $\min\{m,n\}=d$, the kernel of $\scrT(\C)$ can be strictly larger than $\F_q$. For instance, when $m=n=d$, an MRD code $\C$ is exactly a semifield, and the kernel of $\scrT(\C)$ corresponds to the so-called left nucleus of the semifield.  There always exist semifields of order $q^n$ with left nucleus larger than $q$; for instance the famous Albert's twisted fields \cite{albert_generalized_1961,biliotti_collineation_1999}.

When $\C$ is not an MRD code, there are also examples whose kernels are strictly larger than $\F_q$.
\begin{example}\label{ex:kernel_larger}
	{\rm Let $n=4$. Let $\C$ be a set of $4\times 4$ matrices over $\F_q$ derived form the following set of linearized polynomials in $\F_{q^4}[X]$:
	\[ \{ a_0 X + a_1 X^{q^2} : a_0,a_1\in \F_{q^4} \}. \]
	Let $c$ be an element of $\F_{q^2}$. For any $a_0,a_1,x\in \F_{q^4}$, we always have
	\[a_0 (cx) + a_1 (cx)^{q^2}=c(a_0 x + a_1 x^{q^2}).\]
	It implies that $\F_{q^2}$ is a subfield of the kernel of $\C$.}
\end{example}

\subsection{Nuclei of MRD codes}
For the nuclei of MRD codes, we can prove the following results:

\begin{theorem}\label{th:nuclei_of_MRD}
	Let $\C$ be a linear MRD code in $\F_q^{m\times n}$ with $m\leqslant n$ and  minimum distance $d>1$. Then the following statements hold:
	\begin{enumerate}[label=(\alph*)]
		\item Its middle nucleus $N_m(\C)$ is a finite field.
		\item When $\max\{d,m-d+2\} \geqslant \left\lfloor \frac{n}{2} \right\rfloor +1$, its right nucleus $N_r(\C)$ is a finite field.
	\end{enumerate}
\end{theorem}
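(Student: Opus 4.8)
The plan is to prove both statements by invoking the two nucleus-is-a-field lemmas (Lemma \ref{lm:middle_nuclei_field} and Lemma \ref{lm:right_nuclei_field}) and then arranging, for each part, that their hypotheses are satisfied even when the ``obvious'' minimum-distance condition fails. The key observation is that both lemmas require a full-rank matrix in $\C$ (which is free here, since an MRD code with $m\leqslant n$ always contains matrices of rank $m$) together with a lower bound on the minimum distance. For the middle nucleus, Lemma \ref{lm:middle_nuclei_field} asks for $d\geqslant \lfloor m/2\rfloor+1$, and since $m\leqslant n$ this is the relevant threshold; but an arbitrary MRD code need not satisfy it. So the first step is to handle the case $d<\lfloor m/2\rfloor+1$ by passing to the Delsarte dual.

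\textbf{Middle nucleus.} First I would record that by Proposition \ref{prop:Delsartedual}(b) we have $N_m(\C^{\perp})=N_m(\C)^\top$, so $N_m(\C)$ is a field if and only if $N_m(\C^{\perp})$ is a field (transposition is a ring anti-isomorphism carrying fields to fields). Now $\C^{\perp}$ is again a linear MRD code in $\F_q^{m\times n}$, with minimum distance $d^{\perp}=m-d+2$ by \eqref{eq:delsarte_dual_distance} (using $\min\{m,n\}=m$). Since $d+d^{\perp}=m+2$, at least one of $d,d^{\perp}$ is $\geqslant \lfloor m/2\rfloor+1$. Whichever of $\C,\C^{\perp}$ has the larger minimum distance satisfies the hypothesis of Lemma \ref{lm:middle_nuclei_field} (and both contain a full-rank matrix, as noted), so its middle nucleus is a finite field; by the duality above, so is $N_m(\C)$. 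This proves (a) with no extra hypothesis on $d$.

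\textbf{Right nucleus.} For (b) the structural situation is tighter because Lemma \ref{lm:right_nuclei_field} additionally requires $m\geqslant\lfloor n/2\rfloor+1$, not just a bound on $d$. The plan is to apply Lemma \ref{lm:right_nuclei_field} to whichever of $\C$ and $\C^{\perp}$ has minimum distance $\geqslant\lfloor n/2\rfloor+1$: the hypothesis $\max\{d,m-d+2\}\geqslant\lfloor n/2\rfloor+1$ guarantees that one of $d$ or $d^{\perp}=m-d+2$ clears this threshold. The point to check is that when $\C$ (say) has $d\geqslant\lfloor n/2\rfloor+1$, the chain $d\leqslant m\leqslant n$ forces $m\geqslant\lfloor n/2\rfloor+1$ as well, so the extra rectangularity hypothesis of Lemma \ref{lm:right_nuclei_field} holds automatically; the same reasoning applies to $\C^{\perp}$. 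Then Lemma \ref{lm:right_nuclei_field} yields that the corresponding right nucleus is a finite field, and Proposition \ref{prop:Delsartedual}(b), which gives $N_r(\C^{\perp})=N_r(\C)^\top$, transfers the conclusion back to $N_r(\C)$.

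The main obstacle I anticipate is purely bookkeeping rather than conceptual: verifying that in each branch the full collection of inequalities in the relevant lemma genuinely holds. In particular, for the right nucleus one must be careful that the minimum-distance bound on the chosen code also delivers the needed size relation between $m$ and $n$, and that $\C^{\perp}$ still contains a full-rank matrix and is genuinely MRD with the stated distance. These are all consequences of $\min\{m,n\}=m$, of Delsarte's theorem on the dual of an MRD code, and of \eqref{eq:delsarte_dual_distance}; once they are lined up, both parts follow immediately from the two lemmas and the duality of Proposition \ref{prop:Delsartedual}(b).
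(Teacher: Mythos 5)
Your proposal is correct and follows essentially the same route as the paper: both parts are reduced to Lemmas \ref{lm:middle_nuclei_field} and \ref{lm:right_nuclei_field} via the dichotomy on whether $d$ or $d^{\perp}=m-d+2$ clears the relevant threshold, passing to the Delsarte dual and transferring back through $N_m(\C^{\perp})=N_m(\C)^\top$ and $N_r(\C^{\perp})=N_r(\C)^\top$. If anything, you are slightly more careful than the paper, which does not explicitly verify the hypothesis $m\geqslant\left\lfloor \frac{n}{2}\right\rfloor+1$ of Lemma \ref{lm:right_nuclei_field}; your observation that any code in $\F_q^{m\times n}$ has minimum distance at most $m$, so the distance bound forces this inequality automatically, closes that small gap.
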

\begin{proof}
	(a) When $d\geqslant  \left\lfloor \frac{m}{2} \right\rfloor +1$, it is already proved in Lemma \ref{lm:middle_nuclei_field}, because $\C$ is a finite set and there is at least one  full rank matrix in $\C$ by Lemma \ref{lm:MRD_weight_all}; when $d< \left\lfloor \frac{m}{2} \right\rfloor +1$, we look at its Delsarte dual $\C^\perp$. By \eqref{eq:delsarte_dual_distance}, its distance 
	$$d(\C^\perp) =m-d+2> m- \left\lfloor \frac{m}{2} \right\rfloor+1 \geqslant \left\lfloor \frac{m}{2} \right\rfloor+1 .$$
	Again by Lemma \ref{lm:middle_nuclei_field}, we have $N_m(\C^\perp) $ is a finite field. As  $N_m(\C^\perp)=N_m(\C)^\top$ (Proposition \ref{prop:nuclei_invariants} (b)), $N_m(\C)$ is also a finite field.
	
	(b) When $d\geqslant  \left\lfloor \frac{n}{2} \right\rfloor +1$, we get it by Lemma \ref{lm:right_nuclei_field}; otherwise $m-d+2 \geqslant\left\lfloor \frac{n}{2} \right\rfloor +1$, we have that $N_r(\C^\perp)$ is a finite field. From $N_r(\C^\perp)=N_r(\C)^\top$ (Proposition \ref{prop:nuclei_invariants} (b)), we see that $N_r(\C)$ is also a finite field.
\end{proof}

\begin{remark}
\begin{enumerate}[label=(\alph*)]
	\item	When the minimum distance of an MRD code $\C$ is $d=1$, $\C$ is the whole space $\K^{m\times n}$. Then $N_m(\C)=\K^{m\times m}$ and $N_r(\C)=\K^{n\times n}$.
	\item 	When the conditions in Theorem \ref{th:nuclei_of_MRD} are satisfied for a linear MRD code $\C$, it can be viewed as a left vector space over $N_m(\C)$ as well as a right vector space over $N_r(\C)$.
\end{enumerate}
\end{remark}

When $m=n$, it is easy to get the following result from Theorem \ref{th:nuclei_of_MRD}.
\begin{corollary}
	Let $\C$ be a linear MRD code in $\F_q^{n\times n}$ and let the minimum distance $d >1$. Then its middle nucleus and right nucleus are both finite fields.
\end{corollary}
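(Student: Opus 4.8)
The plan is to specialize Theorem \ref{th:nuclei_of_MRD} to the square case $m=n$. Since $m=n$ trivially satisfies the hypothesis $m\leqslant n$, and the minimum distance is assumed to be $d>1$, part (a) of that theorem applies verbatim and yields immediately that $N_m(\C)$ is a finite field. So there is nothing further to do for the middle nucleus.

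For the right nucleus, the only thing I would need to check is that the numerical side condition in Theorem \ref{th:nuclei_of_MRD}(b), namely $\max\{d,m-d+2\}\geqslant \left\lfloor \frac{n}{2}\right\rfloor+1$, is automatically satisfied once $m=n$. Substituting $m=n$, this becomes $\max\{d,\,n-d+2\}\geqslant \left\lfloor \frac{n}{2}\right\rfloor+1$. The key observation is that the two quantities $d$ and $n-d+2$ sum to $n+2$, independently of $d$; hence their maximum is at least half of $n+2$. More precisely, $\max\{d,\,n-d+2\}\geqslant \left\lceil \frac{n+2}{2}\right\rceil=\left\lceil \frac{n}{2}\right\rceil+1\geqslant \left\lfloor \frac{n}{2}\right\rfloor+1$. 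Therefore the hypothesis of part (b) holds for every admissible value of $d$, and the theorem gives that $N_r(\C)$ is a finite field as well.

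I expect no genuine obstacle here: the statement is a clean corollary obtained by reading off the two parts of Theorem \ref{th:nuclei_of_MRD} in the case $m=n$, the only (entirely elementary) point being the pigeonhole-type remark that one of $d$ and $n-d+2$ must be at least $\left\lceil \frac{n+2}{2}\right\rceil$. In particular, no further use of the MRD property, the weight distribution (Lemma \ref{lm:MRD_weight_all}), or Delsarte duality is required beyond what Theorem \ref{th:nuclei_of_MRD} has already packaged.
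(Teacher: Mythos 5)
Your proposal is correct and is exactly the argument the paper intends: the paper states this corollary as an immediate consequence of Theorem \ref{th:nuclei_of_MRD} with $m=n$, and your pigeonhole observation that $d+(n-d+2)=n+2$ forces $\max\{d,\,n-d+2\}\geqslant \left\lceil \frac{n}{2}\right\rceil+1\geqslant \left\lfloor \frac{n}{2}\right\rfloor+1$ is precisely the elementary verification needed to see that hypothesis (b) holds automatically in the square case.
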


In general, Theorem \ref{th:nuclei_of_MRD} (b) does not hold when $\max\{d,m-d+2\} < \left\lfloor \frac{n}{2} \right\rfloor +1$. Let us look at an example with $m=2$, $n=4$, $q=2$ and $d=2$.
\begin{example}
{\rm Let $\F_{q^2}\cong {\mathcal K} \subseteq \F_q^{2 \times 2}$ (for instance, ${\mathcal K}=\F_q[T]$, where $T$ is an irreducible operator over $\F_q$).

A rank metric code $\C\subseteq \F_{2}^{2\times 4}$ is defined as
	\[\C:=\{(B_1, B_2) : B_1,B_2\in {\mathcal K} \},\]
	where $(B_1,B_2)$ stands for the $2\times 4$ matrix whose first $2\times 2$ block is $B_1$ and second $2\times 2$ block is $B_2$.
	
	Clearly all nonzero matrix in $\C$ is of full rank. As there are totally $16$ matrices in $\C$ and $q^{\max\{m,n\}(\min\{m,n\}-d+1)}=16$, $\C$ is an MRD code.
	
	Let 
	$$Z= \left(
	\begin{array}{cccc}
	0 & 0 & 0 & 0 \\ 
	0 & 0 & 0 & 0 \\ 
	0 & 0 & 1 & 0 \\ 
	0 & 0 & 0 & 1
	\end{array} 
	\right).$$
	Then $CZ \subseteq \C$ for each $C\in \C$. However $\rk(Z)=2$.}
\end{example}

\subsection{Nuclei of known linear MRD codes}\label{subsec:nuclei_MRD}
Observe that when $n=m$, it does not matter which linearly independent elements $\alpha_1, \dots, \alpha_n$ are chosen in \eqref{eq:mn_MRD}, because the derived codes are equivalent by multiplying a certain invertible matrix. Thus a generalized Gabidulin code can be directly described as the set of polynomials in (\ref{eq:GG}).  Now let us first restrict ourselves to MRD codes defined through sets of linearized polynomials.

Besides $\cG_{k,s}$ defined by \eqref{eq:GG}, there are two other sets of linearized polynomials which define MRD codes for arbitrary values of $n$ and $k$. These were recently obtained in \cite{sheekey_new_2015}. Precisely, let $n,k,h\in \Z^+$ and $k<n$. Let $\eta$ be in $\F_{q^n}$ such that $N_{q^n/q}(\eta)\neq (-1)^{nk}$. Then the set
\begin{equation}\label{eq:Sheekey}
	\cH_k(\eta, h) = \{a_0 x + a_1 x^q + \cdots +a_{k-1} x^{q^{k-1}} + \eta a_0^{q^h} x^{q^k}: a_0,a_1,\dots, a_{k-1}\in \F_{q^n} \}
\end{equation}
is an $\F_q$-linear MRD code of size $q^{nk}$; these are called \emph{twisted Gabidulin} codes.

Also in \cite{sheekey_new_2015} the following generalization of these examples was mentioned. Let $n,k,s,h\in \Z^+$ satisfying that $\gcd(s,n)=1$ and  let  $\eta$ be in $\F_{q^n}$ such that $N_{q^{sn}/q^s}(\eta)\neq (-1)^{nk}$. Then the set 
\[\cH_{k,s}(\eta, h) = \{a_0 x + a_1 x^{q^s} + \dots +a_{k-1} x^{q^{s(k-1)}} + \eta a_0^{q^h} x^{q^{sk}}: a_0,a_1,\dots, a_{k-1}\in \F_{q^n} \}\]
is an $\F_q$-linear MRD code of size $q^{nk}$.  These sets $\cH_{k,s}(\eta, h)$ latter are known as \emph{generalized twisted Gabidulin} codes after \cite{lunardon_generalized_2015}, where they were intensively studied. Precisely, in \cite{lunardon_generalized_2015} the automorphism group of a generalized twisted Gabidulin code was completely determined and it was proven that the relevant family contains the two known classes $\cG_{k,s}$ and $\cH_k(\eta, h)$ of MRD codes as proper subsets.

Let $\cC$ and $\cC'$ be two set of $q$-polynomials over $\F_{q^n}$. It is clear that $\cC$ and $\cC'$ define two rank metric codes in $\F_q^{n\times n}$ and they are equivalent if there exist two permutation $q$-polynomials $L_1$, $L_2$ and $\rho\in \Aut(\F_q)$ such that $\cC' =\{ L_1\circ f^\rho \circ L_2(x) : f\in \cC \}$, where $(\sum a_{i}x^{q^i})^\rho:= \sum a_{i}^\rho x^{q^i}$ and the symbol $L\circ L'$ for two $q$-polynomials $L$ and $L'$ denotes the polynomial $L(L'(x))$. In particular, the automorphism group of the code derived from $\cC$ consists of all $(L_1,L_2,\rho)$ fixing $\cC$. From the proof of Theorem 4.4 in \cite{lunardon_generalized_2015}, the automorphism group of $\cH_{k,s}(\eta, h)$ can be completely determined. 
\begin{theorem}\label{th:automorphism_Hks}
	Let $n,k,s,h\in \Z^+$ satisfying $\gcd(n,s)=1$ and $2\le k\le n-2$. Let $\eta$ be in $\F_{q^n}$ satisfying $N_{q^{sn}/q^s}(\eta)\neq (-1)^{nk}$. Then $(L_1,L_2,\rho)$ is an automorphism of $\cH_{k,s}(\eta, h)$ if and only if there exist $c,d\in \F_{q^n}^*$ and $r\in \{0,1,\dots, n-1\}$ such that $L_1=cx^{q^r}$, $L_2=dx^{q^{n-r}}$ and
	\begin{equation}\label{eq:automorphism}
		\eta c^{q^h-1}d^{q^{r+h}-q^{r+sk}} = \eta^{\rho q^r}.
	\end{equation}
\end{theorem}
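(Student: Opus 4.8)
The plan is to characterize when a triple $(L_1, L_2, \rho)$ fixes the code $\cH_{k,s}(\eta, h)$ by translating the equivalence of $q$-polynomial codes into explicit coefficient conditions. First I would recall that an automorphism must send the code setwise to itself, so for each $f \in \cH_{k,s}(\eta, h)$ the composition $L_1 \circ f^\rho \circ L_2$ must again lie in $\cH_{k,s}(\eta, h)$. The key structural feature is that a generic element of the code has the shape $a_0 x + a_1 x^{q^s} + \cdots + a_{k-1} x^{q^{s(k-1)}} + \eta a_0^{q^h} x^{q^{sk}}$: it is supported only on the exponents $q^{s\cdot 0}, q^{s\cdot 1}, \dots, q^{sk}$, and the leading coefficient $\eta a_0^{q^h}$ is tied to the constant-term coefficient $a_0$ by a fixed nonlinear relation. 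Any automorphism must preserve both the support pattern (as a set of $q$-power exponents modulo $n$) and this leading-to-constant coupling.

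The main reduction I would carry out is to show that $L_1$ and $L_2$ must themselves be monomial $q$-polynomials. Since composing $x^{q^i}$ with the code shifts all exponents additively by $i$, and since the code's support is the arithmetic progression $\{0, s, 2s, \dots, sk\} \pmod n$ (an interval in the $s$-scaled indexing), the only way to preserve this rigid support is for $L_1 = c\,x^{q^r}$ and $L_2 = d\,x^{q^{n-r}}$ to be monomials whose shifts cancel; a non-monomial $L_1$ or $L_2$ would spread the support and produce exponents outside $\{0,s,\dots,sk\}$, which cannot be absorbed because the code has codimension forcing those coefficients to vanish identically. This is precisely where I would invoke the cited Theorem 4.4 machinery of \cite{lunardon_generalized_2015}: the argument there, analyzing the Dickson-matrix or coefficient constraints, shows the surviving automorphisms are exactly the monomial ones, so I would cite that proof rather than redo the support analysis from scratch.

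Once $L_1 = c\,x^{q^r}$ and $L_2 = d\,x^{q^{n-r}}$ are established, the remaining work is purely computational. I would compute $L_1 \circ f^\rho \circ L_2$ explicitly for a general $f$: applying $L_2$ first multiplies and shifts exponents by $n-r$, applying $\rho$ raises the coefficients to the $\rho$-power, and applying $L_1$ multiplies by $c$ and shifts by $r$, so the net exponent shift is zero and the new coefficients are explicit monomials in $c$, $d$, $a_i^\rho$. Comparing the transformed constant coefficient with the transformed leading coefficient and demanding that the defining relation $(\text{leading}) = \eta\,(\text{constant})^{q^h}$ still hold for all choices of $a_0$ yields the single constraint \eqref{eq:automorphism}, namely $\eta\, c^{q^h - 1} d^{q^{r+h} - q^{r+sk}} = \eta^{\rho q^r}$. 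The converse direction is then immediate: given $c, d, r, \rho$ satisfying \eqref{eq:automorphism}, one checks directly that the monomial triple maps the code into itself, and since it is invertible it is an automorphism.

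The hard part will be the support-rigidity step — verifying that no non-monomial $L_1, L_2$ can preserve the code — because this requires tracking how off-diagonal terms in $L_1$ and $L_2$ would generate code elements with forbidden exponents while respecting the leading-coefficient coupling; all other coefficient-matching is routine algebra over $\F_{q^n}$. Since this rigidity is exactly the content extracted from the proof of Theorem 4.4 in \cite{lunardon_generalized_2015}, I expect to lean on that result and confine the new argument to the explicit monomial computation producing \eqref{eq:automorphism}.
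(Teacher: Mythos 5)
Your proposal is correct and takes essentially the same route as the paper, which offers no self-contained argument but derives the theorem directly from the proof of Theorem 4.4 in \cite{lunardon_generalized_2015} — precisely the monomial-rigidity step you defer to that reference. Your remaining computation is also right: composing $cx^{q^r} \circ f^\rho \circ dx^{q^{n-r}}$ gives coefficients $b_i = c(a_i^\rho)^{q^r}d^{q^{si+r}}$ and leading coefficient $c(\eta^\rho)^{q^r}(a_0^\rho)^{q^{h+r}}d^{q^{sk+r}}$, and equating the latter with $\eta b_0^{q^h}$ cancels the $a_0$ terms and yields exactly \eqref{eq:automorphism}.
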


In what follows we will determine the middle nucleus and the right one of $\cH_{k,s}(\eta,h)$. To this aim, it makes sense first to describe the nuclei in the context of $q$-polynomials over $\F_{q^n}$. 

Regard to this, denote by $\cC \subseteq \mathbb E=\End_{\F_q}(\F_{q^n})$ the set of $q$-polynomials defining a code $\C \subseteq \F_q^{n\times n}$.  Clearly, we have that $N_m(\C)  \cong {\mathcal N}_m(\cC)  = \{\varphi \in \mathbb E \, \colon \,  f \circ \varphi \in \cC \text{ for all } f \in \cC \}$ and $N_r(\C) \cong \mathcal N_r(\cC)  = \{\varphi \in \mathbb E \, \colon \,  \varphi \circ f \in \cC \text{ for all } f \in \cC \},$ where the symbol $\circ$ stands for the composition of maps. By definition and Theorem \ref{th:nuclei_of_MRD}, for each $f\in {\mathcal N}_m(\cC)$ and each $g\in {\mathcal N}_r(\cC)$, $(x, f, \id)$ and $(g, x, \id)$ are both automorphisms of $\cC$.

By Theorem \ref{th:automorphism_Hks}, we can get the following results:
\begin{corollary}\label{coro:nucleus_GTGC}
Let ${\mathcal H}_{k,s}(\eta, h)$ be a generalized twisted Gabidulin code. Then we have
\begin{enumerate}[label=(\alph*)]
\item if $\eta =0$, then ${\mathcal H}_{k,s}(0, h)={\cG}_{k,s}$ and ${\mathcal N}_m({\cG}_{k,s}) = \mathcal N_r({\cG}_{k,s}) \cong \F_{q^{n}}$;
\item if $\eta \neq 0$, then ${\mathcal N}_m({\mathcal H}_{k,s}(\eta, h))\cong \F_q^{\gcd(n,sk-h)}$ and $\mathcal N_r({\mathcal H}_{k,s}(\eta, h))\cong \F_q^{\gcd(n,h)}$.
\end{enumerate}
\end{corollary}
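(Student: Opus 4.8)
The plan is to exploit the characterization of the automorphism group of $\cH_{k,s}(\eta,h)$ provided by Theorem~\ref{th:automorphism_Hks}, together with the observation recorded just above: for every $f\in \mathcal{N}_m(\cC)$ the triple $(x,f,\id)$ is an automorphism of $\cC$, and for every $g\in \mathcal{N}_r(\cC)$ the triple $(g,x,\id)$ is an automorphism. So I would first translate the nucleus membership problem entirely into the language of admissible triples $(L_1,L_2,\rho)$ and then read off the constraints that the normalizations $L_1=x$ (for the middle nucleus) and $L_2=x$ (for the right nucleus) impose on the parameters $c,d,r$ appearing in Theorem~\ref{th:automorphism_Hks}.

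For part~(a), when $\eta=0$ the code $\cH_{k,s}(0,h)$ is exactly the generalized Gabidulin code $\cG_{k,s}$, and I would argue directly that $\varphi\in\mathcal{N}_m(\cG_{k,s})$ forces $\varphi$ to be $\F_{q^n}$-linear, i.e.\ $\varphi=cx$ for some $c\in\F_{q^n}$: composing any $f=\sum_{i=0}^{k-1}a_ix^{q^{si}}$ on the right with $\varphi$ must land back in $\cG_{k,s}$, and since $\cG_{k,s}$ is just the $\F_{q^n}$-span of $x,x^{q^s},\dots,x^{q^{s(k-1)}}$, closure under right composition with $\varphi$ pins down $\varphi$ as scalar multiplication by an element of $\F_{q^n}$. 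The same computation on the left gives $\mathcal{N}_r(\cG_{k,s})\cong\F_{q^n}$; both nuclei are the full field $\F_{q^n}$ acting by scalars. I should double-check that every scalar $c\in\F_{q^n}$ genuinely preserves $\cG_{k,s}$, which is immediate since $(cx)\circ f$ and $f\circ(cx)$ just rescale the coefficients.

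For part~(b) with $\eta\neq 0$, I would specialize \eqref{eq:automorphism}. For the middle nucleus set $L_1=cx^{q^r}$ with the constraint $f\circ\varphi\in\cC$; tracing through the correspondence, a middle-nucleus element corresponds to an automorphism with $L_1=x$, forcing $r=0$ and $c=1$ up to the way the triple is normalized, and then \eqref{eq:automorphism} reduces to $\eta d^{q^{h}-q^{sk}}=\eta$, i.e.\ $d^{q^{h}-q^{sk}}=1$. This holds precisely when $d$ lies in the subfield fixed by the relevant Frobenius power, and counting the solutions $d$ of $d^{q^{sk}-q^{h}}=1$ in $\F_{q^n}^*$ together with $0$ yields a field of order $q^{\gcd(n,sk-h)}$; hence $\mathcal{N}_m(\cH_{k,s}(\eta,h))\cong\F_{q^{\gcd(n,sk-h)}}$. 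Symmetrically, for the right nucleus I set $L_2=x$, forcing $r=0$ and $d=1$, so \eqref{eq:automorphism} becomes $c^{q^h-1}=1$, whose solution set has size $q^{\gcd(n,h)}-1$, giving $\mathcal{N}_r(\cH_{k,s}(\eta,h))\cong\F_{q^{\gcd(n,h)}}$.

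The main obstacle I anticipate is bookkeeping the exact normalization that links a nucleus element $\varphi$ to the triple $(L_1,L_2,\rho)$: one must be careful that $\rho=\id$ throughout (since nucleus maps are $\F_q$-linear and do not twist coefficients), that setting $L_1=x$ really forces $r=0$ and $c=1$ rather than merely $c\in\F_q^*$, and that the multiplicative-order count of solutions to $d^{q^{sk}-q^{h}}=1$ in $\F_{q^n}^*$ gives exactly $\gcd(q^{sk}-q^h,\,q^n-1)=q^{\gcd(n,sk-h)}-1$ nonzero elements, so that adjoining $0$ produces a field and not merely a group. I would verify this last Galois-theoretic count using $\gcd(q^a-q^b,q^n-1)=q^{\gcd(n,a-b)}-1$ (valid since the common factor $q^b$ is coprime to $q^n-1$), which is the one place where a short but essential number-theoretic identity is needed, and I would confirm closure under addition to conclude these solution sets are genuinely subfields.
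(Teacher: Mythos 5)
Your proposal is correct and takes essentially the same route as the paper: both reduce nucleus membership to automorphisms of the form $(x,f,\id)$ and $(g,x,\id)$, invoke Theorem \ref{th:automorphism_Hks} to force $L_1=x$ (hence $c=1$, $r=0$, $L_2=dx$), respectively $L_2=x$ (hence $d=1$, $r=0$, $L_1=cx$), and then solve $\eta d^{q^h-q^{sk}}=\eta$ and $\eta c^{q^h-1}=\eta$, identifying the solution sets as the subfields $\F_{q^{\gcd(n,sk-h)}}$ and $\F_{q^{\gcd(n,h)}}$. The only minor deviation is part (a), where you argue directly from $\cG_{k,s}$ being the $\F_{q^n}$-span of $x,x^{q^s},\dots,x^{q^{s(k-1)}}$ instead of noting (as the paper does) that \eqref{eq:automorphism} is vacuous when $\eta=0$; your gcd identity $\gcd(q^a-q^b,q^n-1)=q^{\gcd(n,a-b)}-1$ correctly fills in a counting detail the paper leaves implicit.
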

\begin{proof}
	To determine the middle nucleus, we only have to check the automorphisms of the form $(x, f, \id)$. Let $\rho$ to be the identity map, $L_1=x$ and $L_2=dx$. If $\eta=0$, then \eqref{eq:automorphism} is always satisfied; otherwise, \eqref{eq:automorphism} becomes
	\[\eta d^{q^{h}-q^{sk}} = \eta,\]
	which holds if and only if $d\in \F_q^{\gcd(n,sk-h)}$.
	
	To determine the right nucleus, we let $\rho$ to be the identity map, $L_2=x$ and $L_1= cx$. Now if $\eta=0$, then \eqref{eq:automorphism} is always satisfied; otherwise, we have
	\[\eta c^{q^h-1} = \eta,\]
	which holds if and only if $c\in \F_q^{\gcd(n,h)}$.
\end{proof}

Now let us turn to linear MRD codes in $\F_q^{m\times n}$ with $m<n$. Most of MRD codes with $1<k<n-1$ and $m<n$ are in the following form:
\begin{equation}\label{eq:mn_MRD_cH}
\left\{ \left(\bv(f(\alpha_1)), \dots, \bv(f(\alpha_m))\right)^t: f\in \cH_{k,s}(\eta, h)
  \right\},
\end{equation}
where $\alpha_1,\dots,\alpha_m$ are linear independent. Several new constructions of MRD codes which are not in this form are presented recently in \cite{horlemann-trautmann_new_2015} and they are proved to be not equivalent to any Gabidulin code. However, we do not know whether they are equivalent to a generalized twisted Gabidulin code \eqref{eq:mn_MRD_cH} or not.

Let $\xi$ be a primitive element of $\F^*_{q^{n}}$ and
\[\mathbb{H} := \left\{ \left(\bv(f(1)),\bv(f(\xi)), \dots, \bv(f(\xi^{n-1}))\right)^t: f\in \cH_{k,s}(\eta, h)
  \right\},\]
then by multiplying a suitable $m$ by $n$ matrix $L$ of rank $m$ on the left of elements in $\mathbb H$, we can get \eqref{eq:mn_MRD_cH}.  In another word, the MRD code \eqref{eq:mn_MRD_cH} is the image of $\mathbb{H}$ under a projection from $\F_q^{n\times n}$ to $\F_q^{m\times n}$.

In \eqref{eq:mn_MRD}, if $\eta=0$, i.e.\ $\cH_{k,s}(\eta, h)= \cG_{k,s}$, its middle and right nuclei are determined very recently in \cite{liebhold_automorphism_2016}; see \cite{morrison_equivalence_2014} for the calculation of the middle nuclei too. Notice that, in \cite{liebhold_automorphism_2016}, the (generalized) Gabidulin code is described as the adjoint of \eqref{eq:mn_MRD}. Hence the right (resp.\ left) idealiser there is exactly the middle (resp.\ right) nucleus of \eqref{eq:mn_MRD}. 
By  Corollary \ref{coro:nucleus_GTGC} and the following lemma which can be directly obtained by definition, we can also easily show that the right nucleus of \eqref{eq:mn_MRD} always contains $\F_{q^n}$.

\begin{lemma}\label{lm:projection_transposed_nucleus}
	Let $\C$ be a rank metric code in $\K^{m\times n}$. Let $L$ be an $\ell \times m$ matrix with $\ell <m$. Then 
	\[ N_r(\C) \subseteq N_r(\{LC : C\in \C \}).  \]
\end{lemma}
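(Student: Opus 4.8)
The plan is to unravel both sides of the claimed inclusion directly from the definition of the right nucleus given in Section~\ref{se:nuclei}, and to show that any $Y \in N_r(\C)$ continues to satisfy the defining property for the projected code $\{LC : C \in \C\}$. Recall that $Y \in N_r(\C)$ means $Y \in \K^{n\times n}$ and $CY \in \C$ for every $C \in \C$; I want to conclude that $Y \in N_r(\{LC : C\in\C\})$, i.e.\ $(LC)Y \in \{LC' : C'\in\C\}$ for every $C\in\C$.

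First I would fix an arbitrary $Y \in N_r(\C)$ and an arbitrary element $LC$ of the projected code, where $C \in \C$. The key observation is purely associativity of matrix multiplication: $(LC)Y = L(CY)$. Since $Y \in N_r(\C)$, we have $CY \in \C$, so setting $C' := CY \in \C$ we obtain $(LC)Y = LC'$, which is by definition an element of $\{LC' : C'\in\C\}$. This shows $Y \in N_r(\{LC : C\in\C\})$, and since $Y$ was arbitrary the inclusion $N_r(\C) \subseteq N_r(\{LC : C\in\C\})$ follows. Note that $Y$ has the correct format, namely size $n\times n$, throughout, because the projection $L$ acts on the left (on the row space of size $m$, now cut down to $\ell$) while the right nucleus acts on the right (on the column space of size $n$), and these two sides do not interfere.

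The only point that warrants a sentence of care is that $\{LC : C\in\C\}$ is again a rank metric code in $\K^{\ell\times n}$ with the same right-multiplication structure, so that $N_r$ of it is well defined; this is immediate since $LC \in \K^{\ell\times n}$ and the set is closed under the relevant operations whenever $\C$ is. I do not expect any genuine obstacle here: the statement is essentially the observation that the right nucleus, being defined by right multiplication, is insensitive to a left projection, and the proof is a one-line associativity argument $(LC)Y = L(CY)$. The lemma is stated precisely so that it can be applied to the projection sending $\mathbb{H}$ to the code in \eqref{eq:mn_MRD_cH}, thereby transporting the containment $\F_{q^n} \subseteq N_r(\mathbb H)$ coming from Corollary~\ref{coro:nucleus_GTGC} down to the $m<n$ code.
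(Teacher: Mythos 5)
Your proof is correct and matches the paper's intent exactly: the paper gives no written proof, stating only that the lemma ``can be directly obtained by definition,'' and your one-line associativity argument $(LC)Y = L(CY) \in L\C$ is precisely that direct verification. Nothing is missing.
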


For the middle nucleus of a projection of a given code, it seems difficult to get any general result similar to Lemma \ref{lm:projection_transposed_nucleus}. After a projection, the new middle nucleus is in the set of matrices of a smaller size. However, it is not necessary that the cardinality of the middle nucleus is getting smaller. For instance, the map from $\F_{p^n}^2$ to itself given by
\[  
(x,y)\mapsto \left((a^{p^k} x + x^{p^k} a) + \alpha (b^{p^k} y + y^{p^k} b)^\sigma, ay + bx\right),
\]
for any $a,b\in \F_{p^n}$, where $2\nmid p$, $2\nmid \frac{n}{\gcd(n,k)}$, $\sigma\in \Aut(\F_{p^n})$ and $\alpha$ is a nonsquare in $\F_{p^n}$, comes from the commutative semifields constructed in \cite{zhou_new_2013}. The middle nucleus of this semifield, which is exactly the middle nucleus of the derived MRD code, is $\F_{p^{\gcd(n,k)}}$ if $\sigma$ is nontrivial or $\F_{p^{2\gcd(n,k)}}$ if $\sigma$ is trivial. If we project it to the last $n$ rows, then we only have
the matrices corresponding to 
\[(x,y)\mapsto (ay+bx).\]
It is easy to show that the middle nucleus of this new set of matrices is $\F_{p^n}$. Hence, if $2\gcd(n,k)<n$, the new middle nucleus is larger than the original one. 

By looking at the projection of rank metric codes, we may also find some small structures just as we have shown for some semifields. The idea of projection and lifting have been already applied several times in the constructions of APN functions and semifields; see \cite{browning_apn_2010,budaghyan_constructing_2009,edel_new_2009,hou_switchings_2015,pott_switching_2010}.

As the middle nuclei and the right ones are both invariant with respect to the equivalence on rank metric codes, we may also consider the set of the middle (resp.\ right) nuclei of every projection of a rank metric code. More precisely, let $\C$ be a rank metric code in $\K^{m\times n}$. For any $l<m$ and any $l$-dimensional subspace $U$ of $\K^m$, we choose a matrix $L_U\in \K^{l\times m}$ whose rows form a basis of $U$. It is not difficult to see that for a given subspace $U$, distinct ways of choosing $L_U$ do not affect $N_m(L_U \C)$ and $N_r(L_U \C)$ up to equivalence. The \emph{middle nuclei spectrum} of a linear rank metric code $\C\subseteq \K^{m\times n}$ is the multiset defined by
\[~\left\{*~~ ( l ,N_m(L_U \C)) : 1<l<m, U\text{ is an  $l$-dimensional subspace of } \K^m    ~~*\right\}.   \]
Similarly, we can define the \emph{right nuclei spectrum} of $\C$. It is clear that these two spectra are both invariants with respect to the equivalence on rank metric codes. Hence they are useful for telling whether two codes are equivalent or not.

It is in general also not easy to compute these spectra for a linear rank metric code. We can use computer to get them for some MRD codes with small parameters.
\begin{example}
	{\rm Let $q=3$, $m=n=4$, $k=2$ and $s=h=1$. 	Let $\eta$ be a root of $X^4-X^3-1\in \F_3[X]$. Then $\cH_{k,s}(\eta, h)$ defines an MRD code $\C$ in $\F_3^{4\times 4}$. 
	
	For $l=3$, there are totally $40$ subspaces $U$ of dimension $l$ in $\F_3^4$. For each of such subspace $U$, our MAGMA program shows that $N_m(L_U \C)\cong \F_3$ and $N_r(L_U \C)\cong \F_3$. When $l=2$ and $1$, for each subspace $U$ of dimension $l$, we have $L_U\C = \F_3^{l\times 4}$ from which it follows  $N_m(L_U \C)=\F_3^{l\times l}$ and $N_r(L_U \C)=\F_3^{4\times 4}$.
	
	If we take $\eta=0$, then $\cH_{k,s}(\eta, h)=\cG_{k,s}=\cG_{2,1}$. Let us use $\C'$ to denote the MRD code in $\F_{3}^{4\times 4}$ corresponding to it. For each subspace $U$ of dimension $3$, Lemma 4.1 and Theorem 4.5 in \cite{liebhold_automorphism_2016} tell us that $N_m(L_U \C')\cong \F_3$ and $N_r(L_U \C')\cong \F_{3^4}$. Again when $l=1,2$, for each subspace $U$ of dimension $l$, we have $L_U\C' = \F_3^{l\times 4}$ which means $N_m(L_U \C')=\F_3^{l\times l}$ and $N_r(L_U \C')=\F_3^{4\times 4}$.}
\end{example}

\section{dimensional dual hyperovals, their kernels and nuclei}\label{se:DHO}
Let $U$ be an $(n+r)$-dimensional vector space over $\F_q$ with $n>1$ and $r\geq 1.$ A collection $\mathbb{D}$ of $n$-dimensional subspaces of $U$ for $n\ge 2$ is called a {\it dimensional dual hyperoval of rank $n$} (abbreviated to DHO) if the following conditions are satisfied: 
\begin{enumerate}
	\item[(D1)] $\dim(X_1 \cap X_2) =1$,\, for each pair of elements $X_1$ and $X_2$ in $\mathbb{D}$;
	\item[(D2)] $X_1 \cap X_2 \cap X_3 = \{\bf 0\}$, for any mutually distinct $X_i\in \mathbb{D}$ ($i\in \{1,2,3\}$);
	\item[(D3)] $\#\mathbb{D}=q^{n-1}+q^{n-2}+\cdots+q+2$. (Observe that $\#\bD =2^n$ if $q=2$.)
\end{enumerate}

The \emph{ambient space} of $\mathbb{D}$, denoted by the symbol $\langle \mathbb{D} \rangle$, is the subspace of $U$ spanned by the elements of $\mathbb{D}$. The subspaces in $\bD$ are called the \emph{components}. Often, a DHO of rank $n$ is viewed projectively and called an $(n-1)$-dimensional dual hyperoval.  Yoshiara \cite{yoshiara_ambient_2004} shows that $n-1\le r \le n(n-1)/2$ if $q\neq 2$ and $n-1\le r \le n(n-1)/2+2$ if $q=2$ (however it is conjectured the upper bound $n(n-1)/2$ also holds when $q=2$).

Up to now, no DHO over a field of odd characteristic is discovered. For any $n\ge 2 $ and any even $2$-power $q$, DHOs of rank $n$ over $\F_q$ are known. There are various constructions of DHOs, see \cite{dempwolff_dimensional_2013,dempwolff_dimensional_2014,dempwolff_orthogonal_2015,taniguchi_new_2014,taniguchi_bilinear_2016,yoshiara_dimensional_2006,yoshiara_dimensional_2008} for instance.

By applying all the translations of the ambient space $V:=\langle \mathbb{D} \rangle$ to the subspaces in an DHO $\bD$, we obtain a translation structure $\scrT_\bD$. According to definition, its kernel $K$ is the set of all endomorphisms of the group $(V,+)$ such that $X^\mu \subseteq X$, for all $X\in \bD$. In the following we determine the kernel of $\scrT_\bD$. 

\begin{proposition}\label{prop:DHOkernel}
Let $\mathbb{D}$ be a DHO of rank $n$ and $V:=\langle \mathbb{D} \rangle$ the ambient space of $\bD$ . Then the kernel $K$ of $\scrT_\bD$ is isomorphic to $\F_q$.
\end{proposition}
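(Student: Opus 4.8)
The plan is to prove the two inclusions $\F_q\subseteq K$ and $K\subseteq\F_q$, identifying throughout a scalar $a\in\F_q$ with the map $a\cdot\id_V$. The inclusion $\F_q\subseteq K$ is immediate: each component $X\in\bD$ is an $\F_q$-subspace of $V$, so $X^{a\cdot\id}=aX\subseteq X$ for every $a\in\F_q$, and these scalar maps form a field isomorphic to $\F_q$, just as in Lemma~\ref{lm:K_in_Kernel}. The whole content is therefore the reverse inclusion, and the first thing I would establish is the combinatorial ``covering'' property of a DHO: for a fixed component $X$, as $Y$ ranges over the components distinct from $X$, the intersection lines $X\cap Y$ run through \emph{all} of the $1$-dimensional $\F_q$-subspaces of $X$, each exactly once. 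Indeed, (D2) forces the map $Y\mapsto X\cap Y$ to be injective, since $X\cap Y=X\cap Y'$ for distinct $Y,Y'\neq X$ would put a nonzero line inside $X\cap Y\cap Y'$, contradicting (D2); and (D1) guarantees each $X\cap Y$ is genuinely $1$-dimensional. A count then closes the argument: by (D3) there are $\#\bD-1=q^{n-1}+\dots+q+1=\frac{q^n-1}{q-1}$ components other than $X$, which is exactly the number of $1$-dimensional subspaces of the $n$-dimensional space $X$, so the injection is a bijection.

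With this in hand, I would take an arbitrary $\mu\in K$ and fix a component $X$. For any line $\ell=X\cap Y$ and any $v\in\ell$ one has $\mu(v)\in X$ (since $v\in X$ and $X^\mu\subseteq X$) and $\mu(v)\in Y$ (since $v\in Y$ and $Y^\mu\subseteq Y$), hence $\mu(v)\in X\cap Y=\ell$; by the covering property every $1$-dimensional subspace of $X$ arises this way, so $\mu$ sends every line of $X$ into itself. Thus for each nonzero $v\in X$ there is a scalar $\lambda(v)\in\F_q$ with $\mu(v)=\lambda(v)v$. The standard argument then shows $\lambda$ is constant on $X$: for $\F_q$-independent $v,w$ the identity $\lambda(v+w)(v+w)=\mu(v+w)=\lambda(v)v+\lambda(w)w$ forces $\lambda(v)=\lambda(v+w)=\lambda(w)$, and since $n\ge 2$ any two nonzero vectors of $X$ can be compared through a third vector independent from both. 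Hence $\mu|_X=a_X\cdot\id_X$ for some $a_X\in\F_q$; in particular $\mu$ restricts to $\F_q$-scalar multiplication on each component.

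It remains to see that a single scalar works everywhere. Given two components $X,Y$, condition (D1) provides a nonzero $v\in X\cap Y$, and then $a_Xv=\mu(v)=a_Yv$ forces $a_X=a_Y$; as every pair of components already meets in a line, all the $a_X$ coincide with one value $a\in\F_q$. Finally, since $V=\langle\bD\rangle=\sum_{X\in\bD}X$, every $v\in V$ is a finite sum $v=\sum_i x_i$ with each $x_i$ in some component, whence $\mu(v)=\sum_i\mu(x_i)=\sum_i ax_i=av$. Therefore $\mu=a\cdot\id_V$, giving $K\subseteq\F_q$ and hence $K\cong\F_q$. I expect the only genuine subtleties to be the covering property and the ``additive map preserving every line is a scalar'' step, both of which become routine once the cardinality identity $\#\bD-1=\frac{q^n-1}{q-1}$ is observed; everything else is bookkeeping with (D1)--(D3).
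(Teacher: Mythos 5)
Your proof is correct and follows essentially the same route as the paper's: show that the intersections $X\cap Y$ exhaust the $1$-dimensional subspaces of each component (the paper's covering observation, which you justify by injectivity via (D2) plus the cardinality count from (D3)), deduce that any kernel element fixes every such line and therefore acts as an $\F_q$-scalar on each component, and then glue the scalars across components using (D1) and the fact that $V=\langle\bD\rangle$. The only difference is presentational: where the paper invokes the fundamental theorem of projective geometry to conclude that a map fixing all lines of $X$ is a scalar, you carry out the elementary coefficient-comparison argument directly, which fills in details the paper leaves terse but does not change the approach.
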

\begin{proof}
Clearly, $K$ is a subring of $\End_{\F_q}(V)$ and $\{\lambda 1_V \,|\, \lambda \in \F_q\}$ is a subfield of $K$. 

On the other hand, by Conditions (D1), (D2) and (D3), it is straightforward to see that, for any $X\in \bD$, each point in $X\setminus \{\bf{0}\}$ is covered by exactly one of the $1$-dimensional subspaces in $\{X\cap Y : X,Y \in \bD \}$. Furthermore, every element $\mu \in K$ fixes each $1$-dimensional subspace $X \cap Y$, $X, Y \in \mathbb{D}$. It follows that, if we regard $X\setminus\{\bf 0\}$ as a projective space, then by the fundamental theorem of projective geometry, $\mu$ induces a scalar on $X$. By Condition (D2), $\mu$ induces the same scalar on $V$.
\end{proof}

We say that $\mathbb{D}$  \emph{splits} over the $r$-dimensional subspace $Y \subseteq U$, if $U=X \oplus Y$ for all $X \in \mathbb{D}$; all known DHOs split over some subspace of their ambient space. In such a case we can identify $U$ with the Cartesian product $\{(\bx,\by) : \bx\in X,\, \by\in Y\}$. In particular, when $q=2$, it is not difficult to verify that there exists an injective map $\beta$ from $X$ into  $\Hom(X,Y)$ such that every member of $\mathbb{D}$ can be written in the following fashion 
\[X({\bf a}):=\{(\bx, \bx\beta({\bf a})) : \bx \in X\},\] 
for some ${\bf a}\in X$. In particular, $\{(\bx,{\bf 0}): \bx \in X\}=X({\bf 0})$, as $\beta({\bf 0})$ is the zero map. 

The subset $\{\beta({\bf a}) : {\bf a}\in X\}$ of $\Hom(X,Y) \cong  \F_2^{n\times r}$, satisfies the following properties corresponding to Conditions (D1) and (D2) stated above for a DHO: 
\begin{enumerate}[label=(P\arabic*)]
\item\label{item.P1} The rank of $\beta({\bf a})-\beta({\bf b})$ is $n-1$ for distinct ${\bf a}, {\bf b} \in X$.
\item\label{item.P2} For each ${\bf a}\in X$, the map sending any ${\bf b}\in X\setminus \{{\bf a}\}$ to the kernel of $\beta({\bf a})-\beta({\bf b})$ is a bijection from  $X\setminus \{{\bf a}\}$ to the set of 1-dimensional subspaces of $X$.
\end{enumerate}

Conversely, a subset of $\Hom(X,Y)$ indexed by the elements in $X$ and satisfying Conditions \ref{item.P1} and \ref{item.P2} stated above, determines a DHO of rank $n$ over $\F_2$ which contains $X$ as a member and splits over $Y$. In some references such as \cite{dempwolff_orthogonal_2015}, such a set is called a \emph{DHO-set}.  

Hence, if $q=2$ and $\mathbb{D}$ is a DHO of rank $n$ in $U$ which splits over $Y$, its associated DHO-set is ${\mathcal D}=\{\beta({\bf a}) : {\bf a} \in X\}$. 
In view of Condition \ref{item.P1}, ${\mathcal D}$ can be seen as a rank metric code in $\Hom(X,Y)\cong \F_2^{n \times r}$ (containing the zero matrix) with minimum distance $n-1$ and $\#{\mathcal D}=2^n$. We observe that ${\mathcal D}$ is an MRD code when $r=n-1$. Also, we have that  $\scrT_\bD=\scrT(\cD)$ and as a consequence of Proposition \ref{prop:DHOkernel}, we may state the following result.
\begin{corollary}\label{coro:kernel_DHO}
	Let $\mathcal D$ be a DHO-set associated with a DHO $\bD$ of rank $n$ in $U:=\langle \bD \rangle\cong \F_2^{n+r}$. Then the kernel of $\scrT(\mathcal D)$ is $\F_2$.
\end{corollary}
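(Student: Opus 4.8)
The plan is to reduce the statement to Proposition~\ref{prop:DHOkernel}, which already computes the kernel of a DHO-derived translation structure, by identifying $\scrT(\cD)$ with $\scrT_\bD$. First I would unwind the splitting description: since $\bD$ splits over $Y$, each component has the form $X({\bf a}) = \{(\bx, \bx\beta({\bf a})) : \bx \in X\}$, and under the identification $\Hom(X,Y) \cong \F_2^{n\times r}$ this is precisely the origin-line $S(\beta({\bf a}))$ of the code structure $\scrT(\cD)$. As $\beta$ is injective with $\#\bD = 2^n = \#X$, the map ${\bf a}\mapsto \beta({\bf a})$ identifies the components of $\bD$ with the matrices of $\cD$, the zero component $X({\bf 0})$ corresponding to $S(O)$.

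Next I would check that both structures carry the same point set $V = U \cong \F_2^{n+r}$ and that translating the components by all of $(V,+)$ reproduces exactly the lines $S(M) + ({\bf 0},{\bf b})$ of $\scrT(\cD)$; the only line class of $\scrT(\cD)$ with no DHO counterpart is the transversal class $S(\infty) + ({\bf a},{\bf 0})$, which corresponds to $Y$ and is not a component. Consequently the kernel $K$ of $\scrT(\cD)$, being the endomorphisms of $V$ that additionally stabilize $S(\infty)$, is contained in the kernel of $\scrT_\bD$.

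The conclusion is then immediate. Proposition~\ref{prop:DHOkernel} with $q = 2$ gives that the kernel of $\scrT_\bD$ is isomorphic to $\F_2$, hence equals the scalar set $\{0, 1_V\}$. Since scalars trivially stabilize $S(\infty)$, they already lie in $K$, and Lemma~\ref{lm:K_in_Kernel} provides the containment $\F_2 \subseteq K$. As $K$ is in turn contained in the kernel of $\scrT_\bD \cong \F_2$, these containments collapse to $K \cong \F_2$.

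I expect the only delicate point to be the bookkeeping in the identification of $\scrT(\cD)$ with $\scrT_\bD$ — specifically, confirming that the extra transversal line class $S(\infty)$ of the code structure imposes no new constraint on the kernel beyond those coming from the components, so that Proposition~\ref{prop:DHOkernel} may be invoked without modification. The remaining steps are a routine dictionary between the geometric language of DHOs and the matrix language of rank metric codes.
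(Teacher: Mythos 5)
Your proof is correct and takes essentially the same route as the paper, which simply observes that $\scrT_\bD=\scrT(\cD)$ and invokes Proposition \ref{prop:DHOkernel}. If anything, your sandwich argument --- the kernel of $\scrT(\cD)$ is contained in the kernel of $\scrT_\bD\cong\F_2$, while $\F_2\subseteq K$ by Lemma \ref{lm:K_in_Kernel} --- is slightly more careful than the paper's bare identification, since it explicitly accounts for the extra parallel class coming from $S(\infty)$, which is a line class of $\scrT(\cD)$ but not a translate of a component of $\bD$.
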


\subsection{Bilinear DHOs, their kernels and nuclei}\label{Bilinear DHOs, their kernels and nuclei}
A DHO $\mathbb{D}$ is called {\it bilinear} if the map $\beta$ mentioned above, is $\F_2$-linear; or, in other words, if the subspace $Y \subseteq U$ can be chosen in such a way that the DHO-set associated to $\mathbb{D}$ turns out to be an abelian group. Bilinear DHOs only exists for $q=2$ and in such a case we have that ${\mathcal D}$ is a linear code in $\F_2^{n \times r},$ containing the zero matrix, with minimum distance $d=n-1$ and dimension $n$. Also, $X\setminus \{0\}$ is a disjoint union of $\ker(\beta({\bf a}))\setminus \{0\}$ with ${\bf a} \in X\setminus \{0\}$. 

Before going on with linear codes associated with DHO sets, let us consider again the slightly more general situation. 

Suppose that the rank metric code $\C \subset  \F_q^{n+r}$ is a $\F_q$-subspace of dimension $\ell$ of $\F_q^{n\times r}$. Also in this case there exists a $\F_q$-linear injection $\beta: \F_q^{\ell} \rightarrow \F_q^{n\times r}$ such that $\C=\{ \beta(\ba): \ba \in \F_q^{\ell} \}.$ In this way one can set up a bilinear map $\sigma(\cdot,\cdot) \, \colon \, \F_q^n \times \F_q^{\ell} \rightarrow \F_q$, via the rule $\sigma({\bf x}, {\bf y}) = {\bf x}\beta({\bf y})$. 

From $\beta$ we may define a new map $\beta^{\circ} : \F_q^n \rightarrow \F_q^{\ell \times r}$ (called the \emph{opposite} to $\beta$) and hence a new bilinear function by the following rule 
$$\bx\beta^{\circ}({\bf y})={\bf y}\beta(\bx) \,\,\, \text{for} \,\, \bx \in \F_q^{\ell}, {\bf y} \in \F_q^n.$$ 
We put $\mathcal {\C}^{\circ} =\{\beta^{\circ}({\bf x}) :{\bf x} \in \F_q^n\}$, and refer to it as to the {\it opposite} code to $\C$. 

On the other hand, we have another map $\beta^{\dagger}$ from $\F_q^{\ell}$ to $\F_q^{r \times n}$. Precisely, fix two non-degenerate symmetric bilinear forms $\bili_{\F_q^n}(\cdot,\cdot)$ on $\F_q^n$ and $\bili_{\F_q^r}(\cdot,\cdot)$ on $\F_q^r$, and for each ${\bf a} \in \F_q^{\ell}$ denote by $\beta^{\dagger}({\bf a})$ the \emph{adjoint operation} of $\beta({\bf a})$ with respect to $\bili_{\F_q^n}$ and $\bili_{\F_q^r}$; i.e. the element in $\F_q^{r \times n}$ satisfying the equation 
$$\bili_{\F_q^r}(\bx\beta^{\dagger}({\bf a}),\by)+\bili_{\F_q^n}(\bx,\by\beta({\bf a}))={\bf 0} \,\,\, \text{for} \,\, \bx \in \F_q^n ,{\bf a} \in \F_q^{\ell}, \, \by\in \F_q^r.$$
We set ${\mathcal C}^{\dagger}:=\{\beta^{\dagger}(\ba) : \ba \in \F_q^m\}$.

Appropriate $\F_q$-bases of $\F_q^n$ and $\F_q^r$ can be chosen in such a way that ${\mathcal C}^{\dagger} = {\mathcal C}^{\top}$, which is exactly the adjoint code of $\mathcal C$. When $r=n$ and $\C$ defines a finite semifield $\bbS$, then  ${\mathcal C}^{\circ}$  and ${\mathcal C}^{\top}$ correspond to the spread sets associated with the semifields obtained from $\bbS$ applying so called Knuth operations introduced in \cite{knuth_finite_1965}. Also in \cite{knuth_finite_1965}, Knuth noted that there are in total five semifields which can be derived from $\bbS$ using $\circ$ and $\top$, and there is a group $G$ isomorphic to $S_3$ acting on these six semifields.

In a similar way, starting with an $\F_q$-linear rank metric code $\cD:=\{\beta(\ba): \ba\in X \}\subseteq \F_q^{n\times r}$, where $X$ and $Y$ are $n$-dimensional and $r$-dimensional over $\F_q$ respectively, and $\beta$ is an injective $\F_q$-linear map from $X$ to $\Hom_{\F_q}(X,Y)$, we can also get at most five other rank metric codes by replacing the semifield multiplication $x\ast y$ with the bilinear form $b(\bx,\by)=\bx \beta(\by)$ over the subspace $X$. The precise approach can be found in \cite{edel_representations_2010}; see \cite{dempwolff_dimensional_2013} for the special case of bilinear DHOs with $X=Y$. For the convenience of the reader, we include some details here:

For $\cD:=\{\beta(\ba): \ba\in X \}\subseteq \F_q^{n\times r}$, we write
\[\mathbb{D}:=\{X({\bf a}) : {\bf a} \in X\},\]
where $X({\bf a})=\{(\bx,\bx\beta({\bf a})) : \bx\in X\}$ for ${\bf a} \in X$. 

Let
\[\mathcal D^{\circ} :=\{\beta^{\circ}({\bf a}) :{\bf a} \in X\}\text{ and }\mathbb{D}^{\circ}:=\{X^{\circ}({\bf a}) : {\bf a} \in X\},\]
where $X^{\circ}({\bf a})=\{(\bx,\bx\beta^{\circ}({\bf a})) : \bx\in X\}$ for ${\bf a} \in X$. 

In particular, when $\cD$ is a bilinear DHO-set, taking $\ba={\bf 0}$ in Condition \ref{item.P2}, we see that each $\bb$ is mapped bijectively to the unique nonzero element in $\ker(\beta(\bb))$, whence the rank of $\beta^\circ (\bb)$ is also $n-1$ for each $\bb\in X\setminus\{\bf 0 \}$, i.e., Condition \ref{item.P1} is satisfied for $\bD^\circ$. It is straightforward to verify that Condition \ref{item.P2} also holds. Therefore we have proved the following result.
\begin{lemma}\label{lm:opposite_DHO}
	Let $\bD$ be a bilinear DHO. Then $\bD^\circ$ is a bilinear DHO as well.
\end{lemma}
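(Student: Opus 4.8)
The plan is to verify that the opposite set $\cD^{\circ}=\{\beta^{\circ}({\bf a}):{\bf a}\in X\}$ satisfies Conditions \ref{item.P1} and \ref{item.P2} and that $\beta^{\circ}$ is $\F_2$-linear; by the converse characterisation of DHO-sets recalled just above, this exhibits $\bD^{\circ}$ as a bilinear DHO. Linearity of $\beta^{\circ}$ comes for free from the bilinearity of $(\bx,\by)\mapsto \bx\beta(\by)$: for each fixed $\bx$ the map $\by\mapsto\by\beta(\bx)$ is additive, and since $\bx\beta^{\circ}(\by)=\by\beta(\bx)$ holds for all $\bx$, this forces $\beta^{\circ}(\by_1+\by_2)=\beta^{\circ}(\by_1)+\beta^{\circ}(\by_2)$. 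Thus $\beta^{\circ}$ is $\F_2$-linear and, over $\F_2$, $\beta^{\circ}({\bf a})-\beta^{\circ}({\bf b})=\beta^{\circ}({\bf a}-{\bf b})$, so it suffices to control $\beta^{\circ}({\bf c})$ for a single ${\bf c}\neq{\bf 0}$.

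The crux is the bijection already supplied by Condition \ref{item.P2} for $\beta$. Taking ${\bf a}={\bf 0}$ there, the assignment ${\bf b}\mapsto\ker(\beta({\bf b}))$ is a bijection from $X\setminus\{{\bf 0}\}$ onto the $1$-dimensional subspaces of $X$; as a $1$-dimensional $\F_2$-subspace is determined by its unique nonzero vector, this is the same as a bijection $\phi\colon X\setminus\{{\bf 0}\}\to X\setminus\{{\bf 0}\}$, where $\phi({\bf b})$ is the nonzero vector with $\phi({\bf b})\beta({\bf b})={\bf 0}$. Now fix ${\bf c}\in X\setminus\{{\bf 0}\}$ and compute the kernel of $\beta^{\circ}({\bf c})$ (the left null space of this matrix). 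From $\bx\beta^{\circ}({\bf c})={\bf c}\beta(\bx)$ we get $\bx\beta^{\circ}({\bf c})={\bf 0}$ exactly when ${\bf c}$ lies in the kernel of $\beta(\bx)$: this is automatic for $\bx={\bf 0}$, and for $\bx\neq{\bf 0}$ it happens iff ${\bf c}=\phi(\bx)$, i.e.\ $\bx=\phi^{-1}({\bf c})$. Hence $\ker(\beta^{\circ}({\bf c}))=\{{\bf 0},\phi^{-1}({\bf c})\}$ is $1$-dimensional, so $\rk(\beta^{\circ}({\bf c}))=n-1$, which is Condition \ref{item.P1} (and, incidentally, shows $\beta^{\circ}$ is injective, so $\#\cD^{\circ}=2^n$).

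Condition \ref{item.P2} for $\beta^{\circ}$ then follows from the same identity: fixing ${\bf a}$ and letting ${\bf b}$ range over $X\setminus\{{\bf a}\}$, the difference ${\bf c}={\bf a}-{\bf b}$ ranges over $X\setminus\{{\bf 0}\}$ and $\ker(\beta^{\circ}({\bf a})-\beta^{\circ}({\bf b}))=\ker(\beta^{\circ}({\bf c}))=\{{\bf 0},\phi^{-1}({\bf c})\}$; since $\phi^{-1}$ is a bijection, ${\bf b}\mapsto\ker(\beta^{\circ}({\bf a})-\beta^{\circ}({\bf b}))$ runs bijectively over the $1$-dimensional subspaces of $X$. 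Therefore $\cD^{\circ}$ is a DHO-set, and being the image of the linear map $\beta^{\circ}$ it determines a bilinear DHO. I expect the only delicate point to be the bookkeeping of which null space is meant: $\beta({\bf a})$ and $\beta^{\circ}({\bf a})$ act on row vectors from the right, so the kernels in \ref{item.P1}--\ref{item.P2} are left null spaces, and one must keep the asymmetric roles of $\bx$ and ${\bf c}$ in $\bx\beta^{\circ}({\bf c})={\bf c}\beta(\bx)$ straight. Once the bijection $\phi$ is isolated, both conditions drop out of this one relation, so no serious obstacle remains.
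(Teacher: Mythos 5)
Your proof is correct and takes essentially the same approach as the paper: there, too, Condition (P1) for $\bD^{\circ}$ is obtained by setting $\ba={\bf 0}$ in Condition (P2) for $\beta$, so that each $\bb$ corresponds bijectively to the unique nonzero element of $\ker(\beta(\bb))$, whence $\rk(\beta^{\circ}(\bb))=n-1$, and Condition (P2) for $\bD^{\circ}$ is then left as a straightforward verification. You simply make explicit the bijection $\phi$ and the identification $\ker(\beta^{\circ}({\bf c}))=\{{\bf 0},\phi^{-1}({\bf c})\}$ that the paper leaves implicit.
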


On the other hand, we have another map $\beta^{\dagger}$ from $X$ to $\Hom_{\F_q}(Y,X)$. Precisely, fix two non-degenerate symmetric bilinear forms $\bili_X(\cdot,\cdot)$ on the subspace $X$ and $\bili_Y(\cdot,\cdot)$ on $Y$, and for each ${\bf a} \in X$ and consider the adjoint $\beta^{\dagger}({\bf a})$ of $\beta({\bf a})$ with respect to $\bili_X$ and $\bili_Y$.
We set 
$${\mathcal D}^{\dagger}:=\{\beta^{\dagger}(\ba) : \ba \in X\} \text{ and } {\mathbb D}^{\dagger}:=\{X^{\dagger}({\bf a}) : {\bf a} \in X\},$$ 
where $X^{\dagger}({\bf a})=\{(\by,\by\beta^{\dagger}({\bf a})) : {\bf a} \in X\}\subseteq Y\oplus X$. Since, as observed before, we can choose appropriate $\F_q$-bases of $X$ and $Y$ in such a way that ${\mathcal D}^{\dagger} = {\mathcal D}^{\top}$, we simply denote $\mathbb{D}^{\dagger}$ and ${\mathcal D}^{\dagger}$ by $\mathbb{D}^\top$ and ${\mathcal D}^{\top}$ respectively in the rest of this paper.

In particular, when $\cD$ is a bilinear DHO-set, Condition \ref{item.P1}, i.e., $\dim(\ker(\beta^{\dagger}({\bf a}))=1$ for every element ${\bf a}\in X \setminus \{0\}$, is satisfied. However, Condition \ref{item.P2}, is not satisfied in general. 

Summing up, starting from $\bD$ or $\cD$, and using the opposite operation $\circ$ and the adjoint operation $\top$, we obtain up to six objects ${\bD}, {\bD}^{\circ}, {\bD}^{\top}, {\bD}^{\circ \top}, {\bD}^{\top \circ}$ and ${\bD}^{\circ \top\circ}={\bD}^{\top \circ \top}$ as well as at most six subspaces of bilinear forms ${\mathcal D}, {\mathcal D}^{\circ}, {\mathcal D}^{\top}, {\mathcal D}^{\circ \top}, {\mathcal D}^{\top \circ}$ and ${\mathcal D}^{\circ \top\circ}={\mathcal D}^{\top \circ \top}$. We call each element in $\{\id, \circ, \top, \circ\top, \top\circ, \circ\top\circ, \top\circ\top \}$ a \emph{Knuth operation}.

In particular, when $\bD$ is a bilinear DHO of rank $n$ in $\F_2^{2n}$ which splits over one of its elements, as pointed by Edel in \cite{edel_representations_2010}, all these six objects $\bD^k$ for a Knuth operation $k$ are bilinear DHOs if $\bD^\top$ is a DHO. Moreover,  $\bD^\top$ is a DHO if and only if $\bD$ is \emph{doubly dual}, i.e., $X_1+X_2$ has codimension $1$ in $U$ and $X_1+X_2+X_3=U$ for three different $X_1,X_2,X_3\in \bD$, where $U$ is the ambient space of $\bD$; see \cite{dempwolff_symmetric_2015,taniguchi_duals_2009}.

\begin{remark}\label{remark:alternating_symmetric_DHO}
	A DHO $\bD$ is \emph{symmetric}, if $\bD^\circ =\bD$, i.e., if $\mathbb{D}$ is determined by an injective $\F_q$-linear map $\beta \colon X \rightarrow \Hom(X,Y)$ such that $(\bx)\beta(\ba)=(\ba)\beta(\bx)$ for all $\bx,\ba \in X$, where $X\cong \F_2^n$ and $Y\cong \F_2^r$. A DHO $\bD$ is \emph{alternating}, if $\ba \beta (\ba)=\bf 0$ for each $\ba\in X$. It is not difficult to verify that an alternating dual hyperoval is symmetric. 	In \cite[Theorem 2.4]{dempwolff_dimensional_2014}(partial results can also be found in \cite{edel_quadratic_2010,yoshiara_notes_2010}), Dempwolff and Edel proved that an alternating DHO determined by a monomorphism $\beta : X\rightarrow \Hom(X,Y)$ where $X\cong \F_2^n$ and $Y\cong \F_2^r$ is equivalent to a quadratic APN function from $\F_2^n$ to $\F_2^r$. A function $f: \F_2^n\rightarrow \F_2^r$ is called \emph{almost perfect nonlinear} or \emph{APN function} for short, if it satisfies that for any $\ba\in X\setminus \{\bf 0\}$ and $\bb \in Y$ 
 
 the equation 
	\[f(\bx + \ba) + f(\bx) = \bb\]
	has at most two solutions. A function  $f: \F_2^n\rightarrow \F_2^r$ is called \emph{quadratic} if the map from $\F_2^n\times \F_2^n$ to $\F_2^r$ defined by
	\[ (\bx, \by) \mapsto f(\bx + \by) + f(\bx) + f(\by) \]
	is bilinear. APN functions have the optimal properties for offering resistance against differential cryptanalysis and they have been intensively studied by many mathematicians. For recent surveys on APN functions, we refer to \cite{blondeau_perfect_2015,pott_almost_2016}.
\end{remark}

Next we consider the links among the kernels and the nuclei of ${\mathcal D}, {\mathcal D}^{\circ}, {\mathcal D}^{\top}, {\mathcal D}^{\circ \top}, {\mathcal D}^{\top \circ}$ and ${\mathcal D}^{\circ \top\circ}={\mathcal D}^{\top \circ \top}$. Similar results for the kernels and the nuclei of semifields are well known, see \cite{lavrauw_semifields_2011,marino_nuclei_2012}.

\begin{lemma}\label{lm:kernel_nuclei_knuth}
	Let $\C$ be an $\F_q$-linear subset of size $q^n$ in $\F_q^{n\times r}$. Let $K(\C^\circ)$ denote the kernel of the translation structure $\scrT(\C^\circ)$ associated with $\C^\circ$. If at least one of $N_r(\C)$ and $ K(\C^\circ)$ is a field, then $N_r(\C) \cong K(\C^\circ)$.
\end{lemma}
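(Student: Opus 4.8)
The plan is to make the defining conditions of the two objects completely explicit and then exhibit a single linear map identifying them. First I would invoke Lemma~\ref{lm:kernel_N1_N2}: since $\C^\circ$ contains the zero matrix, every element of $K(\C^\circ)$ is block-diagonal with blocks $N_1\in\F_q^{n\times n}$ and $N_2\in\F_q^{r\times r}$, and the condition $S(M)^{\mu}\subseteq S(M)$ for all $M\in\C^\circ$ is, exactly as in the proofs of Proposition~\ref{prop:kernel_adjoint_dual} and Theorem~\ref{th:kernel_from_covering}, equivalent to $N_1 M = M N_2$ for every $M\in\C^\circ$.

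The key step is to rewrite this commuting relation through the opposite map. Writing $\C=\{\beta(\ba)\}$ and $\C^\circ=\{\beta^\circ(\ba)\}$ with $\bx\beta^\circ(\ba)=\ba\beta(\bx)$, I would left-multiply $N_1\beta^\circ(\ba)=\beta^\circ(\ba)N_2$ by an arbitrary $\bx$ and apply the opposite relation to each side, obtaining $\ba\,\beta(\bx N_1)=\ba\,(\beta(\bx)N_2)$ for all $\ba$ and $\bx$; since $\ba$ is arbitrary this collapses to
\[\beta(\bx N_1)=\beta(\bx)N_2 \qquad\text{for all }\bx\in\F_q^n.\]
Reading the relation backwards recovers $N_1 M = M N_2$, so $K(\C^\circ)$ is \emph{precisely} the set of pairs $(N_1,N_2)$ satisfying the displayed identity. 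This identity says two things at once: that $\beta(\bx)N_2\in\C$ for every $\bx$, i.e.\ $N_2\in N_r(\C)$; and that $N_1$ is uniquely pinned down as the linear map recording the induced action on indices, the uniqueness coming from injectivity of $\beta$.

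These observations point to the map $\Phi\colon K(\C^\circ)\to N_r(\C)$ sending a kernel element with blocks $N_1,N_2$ to $N_2$. I would then verify three things. First, $\Phi$ is a ring homomorphism: composition of block-diagonal maps sends the pair $(N_1,N_2),(N_1',N_2')$ to $(N_1N_1',N_2N_2')$, hence $N_2\mapsto N_2N_2'$, which matches the product in $N_r(\C)$ (here one must be careful to confirm this is a homomorphism and not an anti-homomorphism). Second, $\Phi$ is injective: if two elements share the same $N_2$, the displayed identity gives $\beta(\bx N_1)=\beta(\bx N_1')$ for all $\bx$, so $N_1=N_1'$ by injectivity of $\beta$. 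Third, $\Phi$ is surjective: given $Y\in N_r(\C)$, the assignment $\beta(\ba)\mapsto\beta(\ba)Y$ is a linear self-map of $\C$, which pulls back through $\beta$ to a linear map $\ba\mapsto\ba N_1$, and then the block-diagonal map with blocks $N_1,Y$ lies in $K(\C^\circ)$ and is sent by $\Phi$ to $Y$.

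Granting these checks, $\Phi$ is a ring isomorphism, and since being a (finite) field is preserved under ring isomorphism, the hypothesis that one of $N_r(\C)$ and $K(\C^\circ)$ is a field forces the other to be a field isomorphic to it; this is the code-theoretic analogue of the classical semifield fact that the kernel of the opposite is the right nucleus, where every object is automatically a field. The step I expect to demand the most care is the translation through the opposite relation in the second paragraph, namely keeping straight which argument of the bilinear form $\bx\beta(\ba)$ is acted upon, together with the verification that the companion block $N_1$ is genuinely determined by $N_2$. The field hypothesis itself enters only at the final transfer of structure (and, if one prefers the inverse-based style of Proposition~\ref{prop:kernel_adjoint_dual}, can be used earlier to guarantee that nonzero elements are invertible, so that $\C N_2=\C$).
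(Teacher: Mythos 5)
Your construction is, in substance, the paper's own proof with the two directions interchanged: the paper starts from $Z\in N_r(\C)$ and builds the companion block $N_Z$ through $\beta(\zeta(\by))=\beta(\by)Z$, then shows conversely that a kernel element with blocks $N_1,N_2$ satisfies $N_2\in N_r(\C)$ with $N_1$ pinned down by injectivity of $\beta$. Your map $\Phi$ is exactly this correspondence, your surjectivity step is the paper's forward direction, the computation $\ba\beta(\bx N_1)=\ba\beta(\bx)N_2$ through the opposite relation is the computation in the paper's proof, and your explicit check that $\Phi$ respects the ring operations (left implicit in the paper when it writes ``$\cong$'') is a sensible addition.

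There is, however, one genuine gap, located precisely where you deviate from the paper: the claim that the bijection holds unconditionally, with the field hypothesis ``entering only at the final transfer of structure.'' Lemma~\ref{lm:kernel_N1_N2} does \emph{not} say that the blocks of a kernel element are matrices over $\F_q$; it says $N_1\in\End((\F_q^n,+))$ and $N_2\in\End((\F_q^r,+))$, i.e.\ they are merely additive ($\F_p$-linear) maps. Since $N_r(\C)$ consists by definition of $\F_q$-matrices, your $\Phi$ need not even land in $N_r(\C)$, and for non-prime $q$ the unconditional statement is false: take $q=4$, $n=1$, $r=2$ and $\C=\{(a,0):a\in\F_4\}$, so that $\C^\circ=\C$; the kernel condition constrains $N_2$ only on the line $\{(x,0):x\in\F_4\}$, hence $K(\C^\circ)$ contains every additive map on a complement of that line and has $1024$ elements, while $N_r(\C)$ has $64$. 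This is exactly the job of the field hypothesis, and it must be done \emph{inside} the bijection argument: if $K(\C^\circ)$ is a field it is commutative and contains $\{aI_{n+r}:a\in\F_q\}$ by Lemma~\ref{lm:K_in_Kernel}, so its elements commute with the scalars and are genuinely $\F_q$-linear; if instead $N_r(\C)$ is a field, one first argues that the row spaces of the elements of $\C$ must span $\F_q^r$ (otherwise $N_r(\C)$ would contain nonzero singular elements), and triviality of the common left kernel of $\C^\circ$ together with this spanning property forces the blocks $N_1,N_2$ of any kernel element to be $\F_q$-linear. To be fair, the paper's proof is equally casual about additive versus $\F_q$-linear (it invokes the hypothesis for invertibility and uniqueness instead), and over $\F_2$ --- the only case in which the lemma is applied, in Theorem~\ref{th:six_kernel_nuclei} and the DHO results --- additive and $\F_2$-linear coincide, so there your argument is complete and correct, and indeed a mild sharpening of the statement.
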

\begin{proof}
	As $\C$ is $\F_q$-linear and $\#\C=q^n$, $\C$ forms an $n$-dimensional vector space over $\F_q$. Thus there exists an $\F_q$-linear injection $\beta:\F_q^n \rightarrow \F_q^{n\times r}$ such that
	\[ \C=\{ \beta(\ba): \ba \in \F_q^n \}.\]
	Let $Z$ be in $N_r(\C)$. According to definition, for any $\by\in \F_q^n$,
	$\beta(\by) Z\in \C$. It means that there exists a map $\zeta$ from $\F_q^n$ to itself such that $\beta(\by)Z=\beta(\zeta(\by))$. Moreover, it is straightforward to verify that $\zeta$ is also $\F_q$-linear, which implies that $\zeta$ corresponds to a matrix $N_Z\in \F_q^{n\times n}$. By calculation, for any $\bx, \by\in \F_q^n$, we have
	\[(\by, \by \beta^\circ (\bx))\left(
	 \begin{array}{cc}
		N_Z & O \\
		O & Z \\
	 \end{array}
	\right)=(\by N_Z, \by \beta^\circ (\bx) Z)
	=(\by N_Z, \bx \beta (\by) Z)
	=(\by N_Z, \bx \beta (\by N_Z)),
	\]
	which equals $(\by N_Z, \by N_Z \beta^\circ (\bx))\in \C^\circ$. Hence the matrix
	\[
	\left(
	 \begin{array}{cc}
		N_Z & O \\
		O & Z \\
	 \end{array}
	\right)
	\]
	is in $K(\C^\circ)$. We also have to prove that this matrix is uniquely determined by $Z$: 
	\begin{itemize}
	\item	Assume that $N_r(\C)$ is a field. It means that $Z$ is of full rank and $\zeta$ is a bijection, which implies that $N_Z$ is also of full rank and uniquely determined by $Z$.
	\item  Assume that $ K(\C^\circ)$ is a field. By Lemma \ref{lm:kernel_N1_N2}, $Z$ and $N_Z$ are both invertible. Hence $N_Z$ is uniquely determined by $Z$ as well.
	\end{itemize}
	
	Now let us show that every element in $ K(\C^\circ)$ corresponds to a unique element in  $N_r(\C)$. Let
	\[
	\left(
	 \begin{array}{cc}
		N_1 & O \\
		O & N_2 \\
	 \end{array}
	\right)
	\]
	be an arbitrary element in $K(\C^\circ)$. Then it is straightforward to get
	\[ (\by N_1, \by N_1 \beta^\circ(\bx)) = (\by N_1, \by \beta^{\circ}(\bx) N_2 )\]
	for all $\bx,\by\in \F_q^n$, from which it follows that
	\[\bx \beta(\by N_1)=\bx\beta(\by)N_2.\]
	Thus $N_2$ is in $N_r(\C)$. Under the assumption that at least one of $N_r(\C)$ and $ K(\C^\circ)$ is a field, we can show that $N_2$ is invertible from which it follows that $N_1$ is uniquely determined by $N_2$.
	
	Therefore, $N_r(\C) \cong K(\C^\circ)$.
\end{proof}

By Lemma \ref{lm:kernel_nuclei_knuth} and Proposition \ref{prop:Delsartedual}, we obtain the following results.
\begin{theorem}\label{th:six_kernel_nuclei}
	Let $\C$ be an $\F_q$-linear subset of size $q^n$ in $\F_q^{n\times r}$. Assume that the kernels of the translation structures associated with $\C$, $\C^\circ$, $\C^\top$, $\C^{\circ\top}$, $\C^{\top\circ}$ and $\C^{\top\circ\top}$ are all fields. Then we have
	\begin{enumerate}[label=(\alph*)]
	\item\label{item:six_1} $N_r(\C) \cong K(\C^\circ) \cong N_m(\C^\top)$;
	\item\label{item:six_2} $N_r(\C^\circ) \cong K(\C) \cong N_m(\C^{\circ\top})$;
	\item\label{item:six_3} $N_r(\C^{\top \circ}) \cong K(\C^\top) \cong N_m(\C^{\top\circ\top})$;
	\item\label{item:six_4} $N_r(\C^{\top \circ \top}) \cong K(\C^{\circ \top}) \cong N_m(\C^{\top\circ})$;
	\item\label{item:six_5} $N_r(\C^\top) \cong K(\C^{\top\circ}) \cong N_m(\C)$;
	\item\label{item:six_6} $N_r(\C^{\circ \top}) \cong K(\C^{\top\circ\top}) \cong N_m(\C^\circ)$.
	\end{enumerate}
\end{theorem}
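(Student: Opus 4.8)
The plan is to deduce all six statements from a single ``master'' chain of isomorphisms, specialised to the six Knuth translates of $\C$. Write $\cA$ for any one of the codes $\C,\C^\circ,\C^\top,\C^{\circ\top},\C^{\top\circ},\C^{\top\circ\top}$. The first step is to establish, for each such $\cA$, the chain
\[
	N_r(\cA)\;\cong\;K(\cA^\circ)\;\cong\;N_m(\cA^\top).
\]
The left-hand isomorphism is precisely Lemma \ref{lm:kernel_nuclei_knuth} applied with $\cA$ playing the role of $\C$: it yields $N_r(\cA)\cong K(\cA^\circ)$ as soon as one of the two sides is a field. The right-hand isomorphism comes from Proposition \ref{prop:Delsartedual}(a), which gives the unconditional equality $N_m(\cA^\top)=N_r(\cA)^\top$; since $N_r(\cA)$ will be a field (hence commutative), the transposition map $Z\mapsto Z^t$ is a genuine ring isomorphism $N_r(\cA)\to N_r(\cA)^\top$, not merely an anti-isomorphism, so $N_m(\cA^\top)\cong N_r(\cA)$.

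Next I would check that the field hypotheses actually license the use of Lemma \ref{lm:kernel_nuclei_knuth} for every choice of $\cA$. For this it suffices that the opposite operation $\circ$ permutes the six listed codes, so that $K(\cA^\circ)$ is always one of the six kernels assumed to be a field. Using that $\circ$ and $\top$ are involutions satisfying the braid relation $\circ\top\circ=\top\circ\top$ (equivalently, they generate a copy of $S_3$), one computes $\cA^\circ$ for each $\cA$ and finds it is again one of the six codes; for example $(\C^{\circ\top})^\circ=\C^{\circ\top\circ}=\C^{\top\circ\top}$ and $(\C^{\top\circ\top})^\circ=\C^{\top\circ\top\circ}=\C^{\circ\top}$. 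Thus $K(\cA^\circ)$ is a field by hypothesis, which activates the Lemma, forces $N_r(\cA)$ to be a field as well, and in turn justifies the commutativity used in the transposition step above.

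Finally I would read off the six statements by substituting the six values of $\cA$ into the master chain and reducing the Knuth words with $\circ^2=\top^2=\id$ and $\circ\top\circ=\top\circ\top$: $\cA=\C$ gives (a); $\cA=\C^\circ$ gives $N_r(\C^\circ)\cong K(\C)\cong N_m(\C^{\circ\top})$, i.e.\ (b); $\cA=\C^\top$ gives (e); $\cA=\C^{\top\circ}$ gives (c); $\cA=\C^{\top\circ\top}$ gives (d) after the reduction $\C^{\top\circ\top\circ}=\C^{\circ\top}$; and $\cA=\C^{\circ\top}$ gives (f) after $\C^{\circ\top\circ}=\C^{\top\circ\top}$. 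Since every isomorphism is supplied by the two cited results, no step requires genuine computation, and there is no serious obstacle. The only points demanding care are the bookkeeping of the $S_3$-relations — so that each $K(\cA^\circ)$ is indeed among the assumed fields — and the observation that the transpose of a commutative matrix field is isomorphic to itself, which is what converts the equalities of Proposition \ref{prop:Delsartedual}(a) into the stated isomorphisms.
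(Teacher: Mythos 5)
Your proposal is correct and takes essentially the same route as the paper: the paper's entire proof of this theorem is the single sentence ``By Lemma \ref{lm:kernel_nuclei_knuth} and Proposition \ref{prop:Delsartedual}, we obtain the following results,'' and your master chain $N_r(\cA)\cong K(\cA^\circ)\cong N_m(\cA^\top)$, specialised over the six Knuth translates, is exactly the argument that citation leaves implicit. Your two points of added care --- checking via the relations $\circ^2=\top^2=\id$ and $\circ\top\circ=\top\circ\top$ that $\cA^\circ$ always lies among the six codes whose kernels are assumed to be fields, and noting that transposition, a priori only an anti-isomorphism, becomes a genuine isomorphism on the commutative (field) nucleus so that Proposition \ref{prop:Delsartedual}(a) yields the stated isomorphisms --- are precisely the details needed to make the paper's one-line proof rigorous.
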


By Proposition \ref{prop:DHOkernel}, we easily get the following result.
\begin{corollary}\label{coro:right_nucleus_DHO}
	Let $\bD$ be a bilinear DHO of rank $n$ with ambient space of dimension $n+r$ over $\F_2$. The right nucleus of the DHO-set associated with $\mathbb{D}$ is $\F_2$. 
\end{corollary}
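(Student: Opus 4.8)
The plan is to deduce the statement from Proposition \ref{prop:DHOkernel} by way of the correspondence between the right nucleus of a linear code and the kernel of the \emph{opposite} code's translation structure recorded in Lemma \ref{lm:kernel_nuclei_knuth}. Recall that the DHO-set $\cD$ associated with a bilinear $\bD$ is an $\F_2$-linear subspace of size $2^n$ inside $\F_2^{n\times r}$; hence Lemma \ref{lm:kernel_nuclei_knuth} is applicable with $\C=\cD$, provided we can certify that one of the two objects $N_r(\cD)$ and $K(\cD^\circ)$ is a field.

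First I would observe that the opposite code $\cD^\circ$ is again a DHO-set. This is precisely the content of Lemma \ref{lm:opposite_DHO}: the opposite $\bD^\circ$ of a bilinear DHO is itself a bilinear DHO, and the DHO-set it determines is exactly $\cD^\circ$. Consequently $\scrT(\cD^\circ)=\scrT_{\bD^\circ}$, and Proposition \ref{prop:DHOkernel} (equivalently Corollary \ref{coro:kernel_DHO} applied to $\bD^\circ$) shows that the kernel $K(\cD^\circ)$ is isomorphic to $\F_2$; in particular it is a field.

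With this in hand the hypothesis of Lemma \ref{lm:kernel_nuclei_knuth} is satisfied, since at least one of $N_r(\cD)$ and $K(\cD^\circ)$ — namely the latter — is a field. The lemma then delivers $N_r(\cD)\cong K(\cD^\circ)\cong \F_2$. Because $N_r(\cD)$ always contains a copy of $\F_2$ as a subfield, this isomorphism forces $N_r(\cD)=\F_2$, which is the assertion.

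The only step requiring genuine care is the verification that $K(\cD^\circ)$ is a field, as this is exactly what unlocks Lemma \ref{lm:kernel_nuclei_knuth}; it rests on the fact that $\bD^\circ$ is again a bilinear DHO, so that Proposition \ref{prop:DHOkernel} applies verbatim to it. I expect this to be the only substantive point, the remainder being a direct invocation of the kernel–nucleus isomorphism together with the standard fact that every nucleus contains the prime field.
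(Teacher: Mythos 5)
Your proposal is correct and follows essentially the same route as the paper: the paper deduces the corollary from Proposition \ref{prop:DHOkernel} via the chain $N_r(\cD)\cong K(\cD^\circ)\cong \F_2$ furnished by Lemma \ref{lm:kernel_nuclei_knuth} (equivalently, item (a) of Theorem \ref{th:six_kernel_nuclei}) together with Lemma \ref{lm:opposite_DHO}. Your only addition is to make explicit the verification that $K(\cD^\circ)$ is a field so that Lemma \ref{lm:kernel_nuclei_knuth} applies, a point the paper leaves implicit.
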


Regarding the middle nucleus of a bilinear DHO-set, in \cite{dempwolff_dimensional_2014}, the following result was proven.
\begin{proposition}{\cite[Proposition 3.9(b)]{dempwolff_dimensional_2014}}\label{th:middle_nucleus_DHO}
	Let $\cD$ be the associated DHO-set of a bilinear DHO of rank $n$ with $n>2$. Then there exists a positive integer $\ell$ dividing $n$ in such a way that the middle nucleus of $\cD$ is isomorphic to $\F_{2^\ell}$.  
\end{proposition}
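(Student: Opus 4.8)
The plan is to work directly with the DHO-set $\cD = \{\beta(\mathbf a) : \mathbf a \in X\}$, where $\beta \colon X \to \Hom(X,Y)$ is the $\F_2$-linear injection with $X \cong \F_2^n$, exploiting Conditions \ref{item.P1} and \ref{item.P2} rather than invoking Lemma \ref{lm:middle_nuclei_field} (which does not apply here: $\cD$ contains no full-rank matrix when $r > n-1$, and has the wrong shape for the lemma when $r = n-1$). By linearity of $\beta$, Condition \ref{item.P1} says $\rk \beta(\mathbf c) = n-1$ for every $\mathbf c \in X \setminus \{\mathbf 0\}$, and Condition \ref{item.P2} says that the left kernels $\ker \beta(\mathbf c) = \{\mathbf x \in X : \mathbf x \beta(\mathbf c) = \mathbf 0\}$, as $\mathbf c$ runs through $X \setminus \{\mathbf 0\}$, run through all the $1$-dimensional subspaces of $X$. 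Recall from Section \ref{se:nuclei} that $N_m(\cD)$ is a finite ring with identity $I_n$ containing $\F_2$. The first fact I would record is that $\mathbf w \beta(\mathbf c) = \mathbf 0$ for all $\mathbf c \in X$ forces $\mathbf w = \mathbf 0$: otherwise $\langle \mathbf w\rangle$ would be a $1$-dimensional subspace contained in every $\ker\beta(\mathbf c)$, contradicting Condition \ref{item.P2}.

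For $Z \in N_m(\cD)$, since $Z\beta(\mathbf a) \in \cD$ and $\beta$ is injective, there is a unique $\zeta_Z(\mathbf a) \in X$ with $Z\beta(\mathbf a) = \beta(\zeta_Z(\mathbf a))$; the map $\zeta_Z$ is $\F_2$-linear, and $Z \mapsto \zeta_Z$ is an injective ring homomorphism into $\End_{\F_2}(X)$ (injectivity again by the observation above, applied to the rows of $Z$). The heart of the proof is to show that every nonzero $Z \in N_m(\cD)$ is invertible. First I would rule out that $\zeta_Z$ is singular: if $\zeta_Z(\mathbf a_0) = \mathbf 0$ for some $\mathbf a_0 \ne \mathbf 0$, then $Z\beta(\mathbf a_0) = O$, so the $(n-1)$-dimensional column space of $\beta(\mathbf a_0)$ lies in the right kernel of $Z$ and hence $\rk Z \le 1$; but a rank-$1$ matrix sends every $\beta(\mathbf a)$ to a matrix of rank $\le 1 < n-1$ (this is exactly where $n > 2$ enters), while $\beta(\zeta_Z(\mathbf a))$ has rank $0$ or $n-1$, forcing $\zeta_Z \equiv 0$ and hence $Z = O$, a contradiction. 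Thus $\zeta_Z$ is bijective, whence $\rk Z \ge \rk(Z\beta(\mathbf a)) = n-1$; and were $\rk Z = n-1$, a nonzero $\mathbf w_0$ in the left kernel of $Z$ would satisfy $\mathbf w_0 \beta(\zeta_Z(\mathbf a)) = \mathbf w_0 Z\beta(\mathbf a) = \mathbf 0$ for all $\mathbf a$, and surjectivity of $\zeta_Z$ together with the first fact would give $\mathbf w_0 = \mathbf 0$, a contradiction. Hence $Z$ is invertible, and $Z^{-1} \in N_m(\cD)$ because $Z^{-1}\beta(\mathbf c) = \beta(\zeta_Z^{-1}(\mathbf c)) \in \cD$.

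Finally I would conclude. The set $N_m(\cD)$ is a finite ring with identity in which every nonzero element is a unit, i.e.\ a finite division ring, so by Wedderburn's theorem it is a commutative field; being of characteristic $2$ and containing $\F_2$, it is isomorphic to $\F_{2^\ell}$ for some $\ell \ge 1$. To see $\ell \mid n$, I would use the injective ring homomorphism $Z \mapsto \zeta_Z$ to make $X \cong \F_2^n$ into a vector space over $F := N_m(\cD) \cong \F_{2^\ell}$ via the scalar action $Z \cdot \mathbf a := \zeta_Z(\mathbf a)$; counting elements then gives $2^n = |X| = |F|^{\dim_F X} = 2^{\ell \dim_F X}$, so $\ell \mid n$, as required.

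I expect the main obstacle to be the invertibility step, and inside it the elimination of the rank-$1$ possibility: this is the only place where the full strength of Conditions \ref{item.P1}--\ref{item.P2} and the hypothesis $n > 2$ are needed, and getting the rank bookkeeping right — column space of $\beta(\mathbf a_0)$ inside the right kernel of $Z$ in one case, versus the left-kernel argument for the rank-$(n-1)$ case — is the crux. Everything else, namely that $N_m(\cD)$ is a ring, that $\zeta_Z$ is a well-defined $\F_2$-linear homomorphism, and the final counting argument, is routine.
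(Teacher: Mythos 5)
Your proof is correct. There is, however, nothing in the paper to compare it against: the paper states this proposition as an imported result, quoted from Dempwolff--Edel \cite[Proposition 3.9(b)]{dempwolff_dimensional_2014}, and gives no proof of its own. So your argument supplies a self-contained proof where the paper has only a citation. Checking it step by step: the preliminary fact that $\mathbf{w}\beta(\mathbf{c})=\mathbf{0}$ for all $\mathbf{c}$ forces $\mathbf{w}=\mathbf{0}$ follows correctly from the bijectivity in Condition \ref{item.P2}; the representation $Z\mapsto\zeta_Z$ is indeed a well-defined injective ring homomorphism (injectivity via that fact applied row by row); the elimination of singular $\zeta_Z$ is sound, since $Z\beta(\mathbf{a}_0)=O$ puts the $(n-1)$-dimensional column space of $\beta(\mathbf{a}_0)$ inside the right kernel of $Z$, giving $\rk Z\leqslant 1$, which for $n>2$ is incompatible with $Z\beta(\mathbf{a})=\beta(\zeta_Z(\mathbf{a}))$ having rank $n-1$ for any $\mathbf{a}$, and collapses $Z$ to $O$; the step from ``$\zeta_Z$ bijective'' to ``$Z$ invertible'' via a nonzero vector in the left kernel of $Z$ and surjectivity of $\zeta_Z$ is correct; and the conclusion via Wedderburn's little theorem plus the counting argument $2^n=\abs{X}=2^{\ell\dim_F X}$ is exactly what is needed for $\ell\mid n$. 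Your side remark is also accurate: Lemma \ref{lm:middle_nuclei_field} cannot be invoked here, since $\cD\subseteq\F_2^{n\times r}$ has no full-rank matrix when $r\geqslant n$ and has more rows than columns when $r=n-1$, so a direct argument from Conditions \ref{item.P1}--\ref{item.P2} is genuinely required. In substance your strategy (invertibility of every nonzero element of the nucleus, then field structure, then divisibility from the induced vector space structure on $X$) is the natural one and matches the spirit of the original proof in the cited reference, with the hypothesis $n>2$ entering precisely where you say it does.
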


About the theorem above, we warn the reader that in \cite{dempwolff_dimensional_2014} the middle nucleus is called the nucleus of the DHO. Also in \cite{dempwolff_dimensional_2014}, the following results are obtained. For $r=n-1$, projections of spreads of commutative semifields provide examples with various sizes of middle nuclei, see \cite[Example 6.3]{dempwolff_dimensional_2014}. Furthermore, when $\cD$ is alternating,  the elements in $N_m(\cD)$ must be in a special form and $N_m(\cD)$ is isomorphic to $\F_2$ or $\F_4$. If the second case occurs, then $n$ must be even. See \cite[Proposition 3.9(f)]{dempwolff_dimensional_2014}.

\medskip

In the final part, let us concentrate on the case that $\cD$ is a DHO-set in $\F_2^{n\times n}$ associated with a bilinear DHO. From Proposition \ref{prop:DHOkernel} and Corollary \ref{coro:right_nucleus_DHO}, we see that the kernels and the nuclei $K(\cD^{\circ})\cong N_r(\cD) \cong N_m(\cD^{\top})$ and $K(\cD) \cong N_r(\cD^{\circ}) \cong N_m(\cD^{\circ \top})$ in case $(a)$ and $(b)$ in Theorem \ref{th:six_kernel_nuclei} are all isomorphic to $\F_2$. By Theorem \ref{th:middle_nucleus_DHO}, the kernels and the nuclei $K(\cD^{\top \circ}) \cong N_r(\cD^{\top}) \cong N_m(\cD)$ and $K(\cD^{\top \circ \top}) \cong N_r(\cD^{\circ \top}) \cong N_m(\cD^{\circ})$ in \ref{item:six_5} and \ref{item:six_6} are all isomorphic to finite fields containing $\F_2$. By duality, the  same result holds true for kernels and nuclei\ref{item:six_3} and \ref{item:six_4}. Indeed, we can prove the following more general result.

\begin{lemma}\label{lm:oppositetranspose}
Let $\bD$ be a DHO of rank $n\geq 3$ with ambient space $V=\F_q^{2n}$. Let $\sigma(\cdot, \cdot)$ be a non-degenerate symmetric bilinear form on $V$ and set ${\bD}^{\dagger}=\{X^{\dagger} \,:\, X \in \bD\},$ where $X^{\dagger}=\{v\in V \,:\, \sigma(x,v)=0, \,x\in X\}$. Then,
$K({\bD}^{\dagger})\simeq \F_q$. 
\end{lemma}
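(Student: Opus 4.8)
The plan is to reduce the computation of $K(\bD^{\dagger})$ to that of $K(\bD)$, which is already settled by Proposition~\ref{prop:DHOkernel}, using the adjunction furnished by the form $\sigma$. I stress at the outset that $\bD^{\dagger}$ need \emph{not} be a DHO: applying $\dagger$ preserves Condition (D1), since $\dim(X_1^{\dagger}\cap X_2^{\dagger})=2n-\dim(X_1+X_2)=2n-(2n-1)=1$, but it destroys Condition (D2) for any triple with $X_1+X_2+X_3\neq V$, i.e.\ exactly when $\bD$ fails to be doubly dual. Hence the covering/counting argument underlying Proposition~\ref{prop:DHOkernel} is unavailable for $\bD^{\dagger}$, and a direct imitation of that proof cannot work. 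This is the main conceptual obstacle, and the duality route is designed precisely to bypass it.

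First I would record two elementary facts. Since $\bD$ has at least three components and Condition (D2) forces $\bigcap_{X\in\bD}X=\{\mathbf 0\}$, taking orthogonals gives $\langle\bD^{\dagger}\rangle=\big(\bigcap_{X\in\bD}X\big)^{\dagger}=V$; thus $\scrT(\bD^{\dagger})$ has ambient space $V$ and $K(\bD^{\dagger})$ consists of additive endomorphisms of $(V,+)$. As the translation group is an elementary abelian $p$-group (with $q=p^{e}$), these are a priori only $\F_p$-linear, so I cannot take $\sigma$-adjoints directly. To repair this I pass to the $\F_p$-bilinear form $\sigma':=\Tr_{\F_q/\F_p}\circ\,\sigma$ on $V$ viewed as an $\F_p$-space. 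A short argument (fix $u$, let $v$ range over $\F_q v_0$, and use nondegeneracy of the trace form) shows $\sigma'$ is a nondegenerate symmetric $\F_p$-bilinear form and that, for every $\F_q$-subspace $W$, one has $W^{\perp'}=W^{\dagger}$, where $\perp'$ denotes the $\sigma'$-orthogonal. In particular $\bD^{\dagger}=\{X^{\perp'}:X\in\bD\}$.

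The heart of the argument is the adjoint correspondence. For $\mu\in\End_{\F_p}(V)$ let $\mu^{\ast}$ be its $\sigma'$-adjoint, defined by $\sigma'(\mu^{\ast}u,v)=\sigma'(u,\mu v)$; this is a ring anti-automorphism and an involution of $\End_{\F_p}(V)$. Using $(X^{\perp'})^{\perp'}=X$, I would verify the per-component equivalence
\[
\mu(X^{\perp'})\subseteq X^{\perp'}\iff \mu^{\ast}(X)\subseteq X ,
\]
since the identity $\sigma'(x,\mu w)=\sigma'(\mu^{\ast}x,w)$ shows that ``$\mu w\perp' X$ for all $w\in X^{\perp'}$'' is the same as ``$\mu^{\ast}x\in(X^{\perp'})^{\perp'}=X$ for all $x\in X$''. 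Quantifying over all $X\in\bD$ then yields $\mu\in K(\bD^{\dagger})\iff\mu^{\ast}\in K(\bD)$, so $\mu\mapsto\mu^{\ast}$ restricts to a bijection $K(\bD^{\dagger})\to K(\bD)$.

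Finally I would invoke Proposition~\ref{prop:DHOkernel}: since $\bD$ is a DHO of rank $n\geq 3$, $K(\bD)\cong\F_q$, so $\#K(\bD)=q$. The bijection above gives $\#K(\bD^{\dagger})=q$; and as $\{\lambda 1_V:\lambda\in\F_q\}$ is already a subfield of $K(\bD^{\dagger})$ of order $q$, I conclude $K(\bD^{\dagger})=\{\lambda 1_V:\lambda\in\F_q\}\cong\F_q$ (equivalently, $\mu\mapsto\mu^{\ast}$ is a ring anti-isomorphism onto the commutative field $K(\bD)$, hence an isomorphism). Note that the only hypotheses used are $n\geq 3$, to apply Proposition~\ref{prop:DHOkernel}, and the nondegeneracy of $\sigma$, for $(X^{\perp'})^{\perp'}=X$ and $\langle\bD^{\dagger}\rangle=V$; crucially no doubly-dual assumption is required, which is exactly why the duality argument succeeds where the covering argument would fail.
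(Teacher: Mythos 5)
Your proposal is correct, but it takes a genuinely different route from the paper's. The paper argues directly and geometrically on $\bD^{\dagger}$: dualizing the DHO axioms, it observes that every hyperplane of $V$ containing a component $X^{\dagger}$ equals $X^{\dagger}+Y^{\dagger}$ for a unique $Y^{\dagger}\in\bD^{\dagger}$ (the dual of the fact that every point of $X$ is $X\cap Y$ for a unique $Y\in\bD$), so any $\epsilon\in K(\bD^{\dagger})$ stabilizes all hyperplanes through $X^{\dagger}$, hence all points of $V/X^{\dagger}$, and therefore induces a scalar $\mu_X 1_{V/X^{\dagger}}$ on each quotient; a gluing argument using a vector outside $X^{\dagger}+Y^{\dagger}$ shows these scalars coincide, and $\bigcap_{X\in\bD}X^{\dagger}=\langle\bD\rangle^{\dagger}=\{\mathbf 0\}$ then forces $\epsilon=\mu 1_V$. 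You instead transfer the problem back to $\bD$ itself: replacing $\sigma$ by the trace form $\sigma'=\Tr_{\F_q/\F_p}\circ\,\sigma$ (so that adjoints of merely additive, i.e.\ $\F_p$-linear, kernel elements make sense, and $X^{\dagger}=X^{\perp'}$ for $\F_q$-subspaces), you establish $\mu\in K(\bD^{\dagger})\iff\mu^{\ast}\in K(\bD)$ and conclude from Proposition \ref{prop:DHOkernel}. Both proofs correctly avoid assuming that $\bD^{\dagger}$ is a DHO, which is indeed the crux. Your route is shorter modulo Proposition \ref{prop:DHOkernel}, and it proves a more general transfer principle of independent interest: for \emph{any} family of $\F_q$-subspaces of $V$, the stabilizing endomorphism ring of the orthogonal family is ring-anti-isomorphic to that of the original family; your explicit handling of the $\F_p$-linearity subtlety via the trace form is also more scrupulous than the paper's treatment. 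What the paper's direct argument buys is self-containedness (it never invokes Proposition \ref{prop:DHOkernel}, though it mirrors its structure in the dual) and the geometric observation that $\bD^{\dagger}$, even when it fails to be a DHO, still satisfies the dual covering property on hyperplanes, which is exactly what its proof runs on.
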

\begin{proof}
Clearly, $K=\{\omega 1_V \,\colon \, \omega \in \F_q\}$ lies in $K(\bD^{\dagger})$. Let $\epsilon$ be an element of $K(\bD^{\dagger})$. As $\bD$ is a DHO, we see that for each $X\in \mathbb D$ and each point $P\in X$, there exist a unique $X' \in \bD$ with $X \cap X' = P$. So for each $X\in {\bD}^{\dagger}$ and each hyperplane $H \subset V$ of $V$ such that $X\subset H$, there exist a unique $X' \in \bD^{\dagger}$ such that $X+X'=H$. Therefore, $\epsilon$ fixes each hyperplane of $V/X$ and hence each point of this space. By the fundamental theorem of projective geometry $\epsilon$ induces $\mu 1_{V/X}$ on $V/X$ for some $\mu \in \F_q$. Similarly, if we take $X'\in \bD^{\dagger}\setminus \{X\}$, then $\epsilon$ induces $\mu'  1_{V/X'}$ on $V/X',$ for some $\mu' \in \F_q$. 

Let $v\in V \setminus \{X+X'\}$. Then, $v^{\epsilon} = \mu v +x = {\mu}' v +x'$, with $x\in X$ and $x' \in X'$. Hence, $(\mu - \mu')v \in X+X',$ i.e., $\mu = \mu'$. So $\epsilon$ induces $\mu 1_{V/(X \cap X')}$ on $V/(X \cap X')$. As $V=\langle \bD \rangle$, we have $\bigcap_{X\in \bD^{\dagger}}=0$. This forces $\epsilon = \mu 1_V$.
 \end{proof}

Let $V=\F_2^n \times \F_2^n$. As observed in Section \ref{Bilinear DHOs, their kernels and nuclei}, we may set $\sigma((x,y), (x',y'))=xy'+ yx'$. It is then easy to see that the adjoint operation on $\cD$ with respect to $\sigma$ is exactly $\top$. Hence, as a direct consequence of Lemma \ref{lm:oppositetranspose}, we have the following.

\begin{proposition}
	Let $\cD$ be the DHO-set associated with a bilinear DHO $\bD$ of rank $n$ in the ambient space of dimension $2n$, where $n>2$.  Then the kernel of $\cD^k$ is isomorphic to $\F_2$ for any Knuth operation $k\in \{\top, \circ\top\}$.
\end{proposition}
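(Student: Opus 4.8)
The plan is to reduce both cases to Lemma \ref{lm:oppositetranspose}, exploiting the fact, recorded just above, that over $V=\F_2^n\times\F_2^n$ with $\sigma((x,y),(x',y'))=xy'+yx'$ the adjoint operation $\top$ coincides with passing to $\sigma$-orthogonal complements of the components; that is, $\bD^\top=\bD^\dagger$ in the notation of that lemma. Throughout I will use that $\scrT(\cD^k)=\scrT_{\bD^k}$, so that the kernel of the code $\cD^k$ is precisely the kernel $K(\bD^k)$ of the translation structure attached to $\bD^k$. Note that Lemma \ref{lm:oppositetranspose} only asks that the starting object be a DHO of rank $n\geq 3$ with ambient space $\F_q^{2n}$; it does not require the $\dagger$-image itself to be a DHO, which matters because $\cD^\top$ need not satisfy Condition \ref{item.P2}.

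First I would dispose of the case $k=\top$. Here $\bD^\top=\bD^\dagger$ is exactly the image of $\bD$ under the $\sigma$-adjoint, and its components are the subspaces $S(M)$ for $M\in\cD^\top$. Since $\bD$ is a bilinear DHO of rank $n$ with $n>2$ and ambient space of dimension $2n$, the hypotheses of Lemma \ref{lm:oppositetranspose} (with $q=2$) are met, and that lemma gives $K(\bD^\top)\cong\F_2$. Hence the kernel of $\cD^\top$ is $\F_2$.

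For the case $k=\circ\top$ I would first apply Lemma \ref{lm:opposite_DHO}, which guarantees that $\bD^\circ$ is again a bilinear DHO. Its associated DHO-set $\cD^\circ$ lies in $\Hom(X,Y)\cong\F_2^{n\times n}$, so $\bD^\circ$ again has rank $n$ and (as checked below) ambient space $V=\F_2^{2n}$; with $n>2$ this makes $\bD^\circ$ eligible for Lemma \ref{lm:oppositetranspose}. Writing $\cD^{\circ\top}=(\cD^\circ)^\top$, the operation $\top$ applied to $\cD^\circ$ is once more the $\sigma$-adjoint on $V$, so $(\bD^\circ)^\top=(\bD^\circ)^\dagger$. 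Applying Lemma \ref{lm:oppositetranspose} to $\bD^\circ$ then yields $K((\bD^\circ)^\top)\cong\F_2$, which is the kernel of $\cD^{\circ\top}$.

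The only step that requires genuine care — and which I regard as the main obstacle — is verifying that $\bD^\circ$ meets the ambient-space hypothesis of Lemma \ref{lm:oppositetranspose}, i.e. that $\langle\bD^\circ\rangle$ is all of $\F_2^{2n}$ and not a proper subspace. Concretely one must check that the images of the maps $\beta^\circ(\ba)$ span $Y$. This follows from the symmetry of the underlying bilinear form $b(\bx,\by)=\bx\beta(\by)$: by the defining rule $\bx\beta^\circ(\ba)=\ba\beta(\bx)$, the span of the images of $\{\beta^\circ(\ba)\}$ equals the span of $\{\ba\beta(\bx):\ba,\bx\in X\}$, which is the same subspace as the span of the images of $\{\beta(\ba)\}$, and the latter is $Y$ precisely because $\bD$ has ambient space of dimension $2n$ by hypothesis. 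Once this dimension count $n+r=2n$ is recorded for $\bD^\circ$, both invocations of Lemma \ref{lm:oppositetranspose} go through verbatim.
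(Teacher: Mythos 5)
Your proposal is correct and takes essentially the same route as the paper: both cases are reduced to Lemma \ref{lm:oppositetranspose} by identifying $\top$ with the $\sigma$-adjoint $\dagger$ for $\sigma((x,y),(x',y'))=xy'+yx'$, with Lemma \ref{lm:opposite_DHO} providing that $\bD^\circ$ is again a bilinear DHO in the case $k=\circ\top$. Your explicit check that $\langle\bD^\circ\rangle$ has dimension $2n$ (so that $\bD^\circ$ meets the ambient-space hypothesis of the lemma) is a detail the paper treats as immediate, but it does not alter the argument.
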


\section*{Acknowledgment}
The authors are grateful to the two anonymous referees for their valuable suggestions and comments. This work is supported by the Research Project of MIUR (Italian Office for University and Research) ``Strutture geometriche, Combinatoria e loro Applicazioni" 2012. Yue Zhou is supported by the Alexander von Humboldt Foundation and the National Natural Science Foundation of China (No.\ 11401579, 11531002).


\end{document}